\numberwithin{equation}{section}
\numberwithin{figure}{section}
\newtheorem{theorem}{Theorem}[section]
\newtheorem{lemma}{Lemma}[section]
\newtheorem{remark}{Remark}[section]
\newtheorem{example}{Example}[section]
\begin{document}
%%-----------------------------
%%      the top matter
%%-----------------------------
\title{Adaptive finite element approximation of sparse optimal control with integral fractional Laplacian}% At most 5 thanks

\author{Fangyuan Wang}\address{School of Mathematics and Statistics, Shandong Normal University, Jinan, 250014, China }
\author{Qiming Wang}\address{School of Mathematical Sciences, Beijing Normal University, Zhuhai, 519087, China}
\author{Zhaojie Zhou$^*$}\address{School of Mathematics and Statistics, Shandong Normal University, Jinan, 250014, China}
\date{\footnotetext{$^*$Corresponding author: zhouzhaojie@sdnu.edu.cn}}

\begin{abstract}
In this paper we present and analyze a weighted residual a posteriori error estimate for an optimal control problem. The problem involves a nondifferentiable cost functional, a state equation with an integral fractional Laplacian, and control constraints. We employ subdifferentiation in the context of nondifferentiable convex analysis to obtain first-order optimality conditions. Piecewise linear polynomials are utilized to approximate the solutions of the state and adjoint equations. The control variable is discretized using the variational discretization method. Upper and lower bounds for the a posteriori error estimate of the finite element approximation of the optimal control problem are derived.
In the region where $\frac{3}{2} < \alpha < 2$, the residuals do not satisfy the $L^2(\Omega)$ regularity. To address this issue, an additional weight is included in the weighted residual estimator, which is based on a power of the distance from the mesh skeleton.
Furthermore, we propose an h-adaptive algorithm driven by the posterior view error estimator, utilizing the D$\rm{\ddot{o}}$rfler labeling criterion. The convergence analysis results show that the approximation sequence generated by the adaptive algorithm converges at the optimal algebraic rate. Finally, numerical experiments are conducted to validate the theoretical results.
 \end{abstract}
%
%\begin{resume}
% \end{resume}
%%
\subjclass[]{49J20,49M25,65N12,65N30,65N50}
\keywords{adaptive finite element; optimal control; sparse control; fractional Laplacian; a posteriori error estimate}
\maketitle
%%-----------------------------
%%      your text
%%-----------------------------
\section{Introduction}

\ \ \ In this paper, we present and analyze a weighted residual a posteriori error estimate for an optimal control problem involving a nondifferentiable cost functional, a state equation with an integral fractional Laplacian, and control constraints. For a bounded Lipschitz domain $\Omega\subset R^d, \Omega^c :=R^d\backslash\overline{\Omega}$, we consider
    \begin{eqnarray}\label{object}
  \min\limits_{u\in U_{ad}} J(y,u):= \frac{1}{2} \|y-y_d\|^2_{L^2(\Omega)}
  +\frac{\gamma}{2} \|u\|^2_{L^2(\Omega)}+\beta \|u\|_{L^1(\Omega)}
\end{eqnarray}
subject to
\begin{eqnarray}\label{state}\left\{ \begin{aligned}
 (-\Delta)^{\frac{\alpha}{2}} y&=f+u,\ &\mbox{in}\ \Omega,\\
   y&=0, \ &\mbox{on}\ \Omega^c,
\end{aligned}\right.
   \end{eqnarray}
and the control constraints
\begin{eqnarray*}
U_{ad}=\Big\{v\in L^{2}(\Omega)| a\leq v\leq b,  \ a, b\in R\Big\}.
  \end{eqnarray*}
Here parameters $\gamma>0$ and $\beta>0$.  In the sequel, $y$
is state and $u$ is the control variable. The function $y_d\in L^2(\Omega)$ is referred to as desired state. To focus on the scenario of nondifferentiability, it is assumed that $a, b \in R$ satisfy the condition that $a < 0 < b$. We notice that the set $U_{ad}$, is a nonempty, bounded, closed and convex subset of $L^2(\Omega)$.

The introduction of nonsmooth regularization term $\beta \|u\|_{L^1(\Omega)}$  in the PDE-constrained optimization problems promotes sparsity in the solutions. This allows the control variable in the optimization process to tend towards zero in regions where it has negligible impact on the cost function, therefore minimizing the cost function. Sparse optimization is widely used in many practical applications, especially in the processing and analysis of high dimensional data, such as noise processing, machine learning, face recognition, etc.
  %Therefore, we consider a target functional that combines the $L^1(\Omega)$ norm and $L^2(\Omega)$ norm, as expressed in Equation (\ref{object}), which allows for specific control of certain physical quantities or locations using the $L^1$ norm while maintaining smoothness and continuity through the $L^2$ norm.

Previous research has addressed the analysis of optimal control problems with a cost term containing $L^1(\Omega)$ norm \cite{cla,ca2,ca3,ca4,ca,ot,sta,wach,lengchen}. For instance, in \cite{sta}, the authors investigated the $L^1(\Omega)$ control problem constrained by a linear elliptic PDE, where the objective functional incorporated a regularization technique based on the $L^2(\Omega)$ control cost term. The authors analyzed the optimality conditions and proposed a semismooth Newton method that achieves local convergence with superlinear speed. Building upon this work, \cite{wach} provided a priori and posteriori error estimates through finite element analysis. Furthermore, in \cite{ca}, the authors considered a semilinear elliptic PDE as the state equation and analyzed second-order optimality conditions. Additionally, the authors in \cite{ot} studied the sparse control problem with a fractional diffusion equation as the state equation. They analyzed a priori error estimate for the fully discrete case using finite element methods. More recently, Ot$\rm{\acute{a}}$rola et al. studied a sparse optimal control problem with a non-differentiable cost functional, where the state equations are Poisson's problem and fractional diffusion equation, respectively in \cite{otzheng,otfen}. In \cite{otzheng}, the authors studied three different strategies for approximating the control variable, they proposed and analyzed a reliable and efficient a posteriori error estimate, and designed an adaptive strategy to achieve optimal convergence rates. In \cite{otfen},  the authors investigated an adaptive finite element method for sparse optimal control of fractional diffusion, taking into account the spectral definition of the fractional Laplacian operator.

In comparison to the priori error analysis of finite element approximations for PDE-constrained optimization, the design and analysis of a posteriori error estimate is not much. The initial work on reliable a posteriori error estimation for optimal control problems was presented in \cite{liu20011}, followed by a series of related studies \cite{liu20012,liu2002,liu20031,liu20032,liu2008}. Residual-based a posteriori error estimates incorporating data oscillation were introduced in \cite{hin}. Later, a unified framework for the a posterior error analysis of linear quadratic optimal control problems with control constraints was established in \cite{koh}, and the pure convergence of an adaptive finite element method for optimal control problems with variable divergence control was proved, that is, convergence without convergence rate. In \cite{gongyan}, the authors rigorously proved convergence and quasi-optimality of AFEM for optimal control problem involving state and adjoint state variable.

However, to the best of our knowledge, no previous research has combined adaptive finite element methods (AFEMs) with integral fractional Laplacian sparse optimal control to address such problems. Therefore, in this paper, we focus on the adaptive finite element approximation for sparse optimal control with the integral fractional Laplacian. We outline and analyze the solution methodology for problem (\ref{object})-(\ref{state}) based on the following considerations:

$\bullet$ The optimal control problem involving the fractional Laplacian operator can effectively simulate groundwater pollution \cite{benson}, turbulent flow \cite{turb}, and chaotic dynamics \cite{chaos}. Unlike integer-order diffusion equations, the fractional Laplacian operator exhibits power-law decay, which accurately captures heavy-tailed power-law decay phenomena observed in these applications. Hence, studying fractional optimal control problem is essential.

$\bullet$ Objective functional by introducing the $L^1(\Omega)$ norm to control some specific physical quantities or locations, and the $L^2(\Omega)$ norm to maintain smoothness and continuity, can better solve the practical problems that need to control the optimal cost. The existence of $L^1(\Omega)$ term in the objective function
 requires us to derive the first-order condition using a subdifferential approach \cite{ ca,sta, wach}, which is different from distributed optimal control problems.

$\bullet$
Due to the non-locality, non-differentiability, and intrinsic constraints of the fractional Laplacian operator, by adopting adaptive strategies and a posteriori error estimate, we can identify singularities and refine the mesh accordingly, which can more effectively allocate computational resources and achieve higher accuracy with lower computational costs. One of the challenges in designing the a posterior error estimator is the nature of the residual, that is, it is not necessarily in $L^2(\Omega)$. We refer to \cite{Fau} and introduce the weighted residual estimator, where the weights are given by the power of the distance to the grid skeleton
 \begin{eqnarray*}
 E^2_y(y_{\mathcal{T}_h},K):=\|\widetilde{h}^{\frac{\alpha}{2}}_K(f+u_{\mathcal{T}_h}-(-\Delta )^sy_{\mathcal{T}_h})\|^2_{L^2(K)},\\
 E^2_p(p_{\mathcal{T}_h},K):=\|\widetilde{h}^{\frac{\alpha}{2}}_K(y_{\mathcal{T}_h}-y_d-(-\Delta )^sp_{\mathcal{T}_h})\|^2_{L^2(K)},
  \end{eqnarray*}
 (see section 4).

$\bullet$ The adaptive finite element method is widely used, but there are not many convergence analyses of the algorithm. The optimal control problem we studied is a coupled system with nonlinear properties, which leads to the lack of orthogonality presented in \cite{jm} and brings difficulties to our convergence analysis. In order to address this issue, we refer to reference \cite{Car} and prove its quasi-orthogonality.

Recently, the only work on a posteriori error analysis for sparse optimal control constrained by fractional order equations, as in (\ref{object})-(\ref{state}), is found in \cite{otfen}. Compared with \cite{otfen}, this paper studies the integral definition of the fractional Laplacian operator, which plays an important role in the modeling of complex non-local and nonlinear phenomena such as diffusion, heat transfer, resistance and elasticity. And the main difference is that the convergence of the adaptive algorithm is also analyzed in this paper. In this paper, we use piecewise linear polynomial dispersion for the state variable and variational discretization for the control variable. We design a posterior error estimator that requires only discretization of the state variable and adjoint variable. Notably, in the $\frac{3}{2} < \alpha < 2$, the residual does not satisfy the $L^2(\Omega)$-regularity. To address this issue, an additional weight based on the power of the distance from the mesh skeleton is included in the weighted residual estimator. An h-adaptive algorithm driven by the D$\rm{\ddot{o}}$rfler marking criterion based on the a posteriori error estimator is proposed and its convergence is proved.

The organization of the paper is as follows: In section 2, we introduce the symbols used and provide a brief overview of elements in convex analysis, along with the regularity of solutions to optimal control problem. In section 3, we analyze the first-order optimality conditions for the problem. In section 4, we introduce the finite element discretization of the optimal control problem (\ref{object})-(\ref{state}) and design a weighted residual estimator. The core of our work is presented in sections 4 and 5. For the discretization introduced  at the beginning of section 4, we first derive upper and lower bounds for the a posteriori error estimate of the finite element approximation for the optimal control problem. An h-adaptive algorithm driven by the posterior view error estimator based on D$\rm{\ddot{o}}$rfler labeling criterion is proposed. In section 5, we show that the sequence of approximations produced by the adaptive algorithm converges at the optimal algebraic rate. In section 6, a series of numerical examples are provided to demonstrate the effectiveness of our theoretical findings.
\section{Preliminaries}
\ \ \ In this section we introduce some preliminaries about fractional Sobolev spaces, subdifferential and fractional Laplacian.
For a bounded domain $\Lambda\subset R^d, L^2(\Lambda)$ denotes the Banach spaces of standard 2-th Lebesgue integrable functions on $\Lambda$. For $s\in(0,1), $ $H^s(\Lambda)$ denotes the fractional Sobolev space. $H_0^s(\Lambda)$ is the subspace of $H^s(\Lambda)$ consisting of functions whose trace is zero on $\partial\Lambda$. Let $(\cdot,\cdot)$ and $\|\cdot\|$ denote the inner product and norm in $L^2(\Lambda)$, respectively. The
seminorm $|\cdot|_{H^s(\Lambda)}$ and the full norm $\|\cdot\|_{H^s(\Lambda)}$ are denoted as follows
$$|y|^2_{H^s(\Lambda)}=\int\int_{\Lambda\times\Lambda}\frac{y(v)-y(w)}{|v-w|^{d+2s}}dvdw,$$
$$\|y\|^2_{H^s(\Lambda)}=\|y\|^2+|y|^2_{H^s(\Lambda)}.$$
Moreover, we introduce the following space, which will be used in the weak formulation of state equation
$$\widetilde{H}^s(\Omega)=\{v\in H^s(R^d): v=0 \ \ \textrm{in}\ \  \Omega^c \}.$$

Next, we will review some concepts with respect to subdifferentials from convex analysis that will be useful in our upcoming analysis. For details, please refer to \cite{w}.
Consider a real and normed vector space $G$. Suppose $\phi : G \rightarrow R \cup \{\infty\}$ be a convex and proper functional. Let $v \in G $ be such that $\phi(v)<\infty$. A subgradient of $G$ at $v$ is an element $v^*\in G^*$ that satisfies
\begin{eqnarray}
\langle v^*,w-v\rangle_{G^*,G}\leq \phi(w)-\phi(v),\ \forall w\in G.
\end{eqnarray}
Here, $\langle\cdot,\cdot\rangle_{G^*,G}$ represents the duality pairing between $G^*$ and $G$. The set of all subgradients of $\phi$ at $\bar{v}$, denoted by $\partial \phi(\bar{v})$, refers to the subdifferential of $\phi$ at $\bar{v}$.
$$\partial \phi(\bar{v})=\{v\in L^2(\Omega):\phi(w)-\phi(\bar{v})\geq (v,w-\bar{v}),\ w\in L^2(\Omega)\}.$$
As $\phi$ is a convex functional, the subdifferential at any point $v$  within the effective domain of $\phi$ is not empty. Additionally, it is important to note that the subdifferential is monotone, i.e.,
\begin{eqnarray}\label{subg}
\langle v^*-w^*,v-w\rangle_{G^*,G}\geq 0,\ \forall v^*\in \partial \phi(v),\ \forall w^*\in \partial \phi(w).
\end{eqnarray}

Finally, we introduce the definition of fractional Laplacian:
 \begin{eqnarray}\label{lapla}
  (-\Delta)^{\frac{\alpha}{2}}y(x):=C(d,\alpha) \ {\textrm{p.v.}}\int_{R^d} \frac{y(x)-y(w)}{| x-w|^{d+\alpha}}dw.
  \end{eqnarray}
Here   $0<\alpha<2$, and $$C(d,\alpha)=\frac{ 2^{\alpha}\Gamma(\frac{\alpha}{2}+\frac{d}{2})}{\pi^{d/2}\Gamma(-\frac{\alpha}{2})}$$
and ''p.v.'' denotes the principal value of the integral:
\begin{eqnarray}
{\textrm{p.v.}}\int_{R^d} \frac{y(x)-y(w)}{|x-w|^{d+\alpha}}dw=\lim\limits_{\epsilon\rightarrow  0}\int_{R^d\setminus B_{\epsilon}(v)} \frac{y(x)-y(w)}{|x-w|^{d+\alpha}}dw,
\end{eqnarray}
where $B_{\epsilon}(v)$ is a ball of radius $\epsilon$ centered at $x$. The difference $y(x)-y(w)$ in the numerator of (\ref{lapla}), which vanishes at the singularity, provides a regularization, which together with averaging of positive and negative parts allows the principal value to exist.
A consequence of this definition is the mapping property (see \cite{bon}).
  \begin{eqnarray*}
  (-\Delta)^{\frac{\alpha}{2}}: H^s(R^d)\rightarrow H^{s-\alpha}(R^d),\ s\geq \frac{\alpha}{2}.
  \end{eqnarray*}
% We consider the following  problem
%\begin{eqnarray}\label{state_eq1}\left\{ \begin{aligned}
%(-\Delta)^{\frac{\alpha}{2}} y&=f,\ \mbox{in}&\ \Omega, \\
%y&=0,\ \mbox{in}&\ \Omega^c .
%\end{aligned}\right.
%   \end{eqnarray}

\section{Optimal control problem}

 \ \  \ The weak formulation of state equation (\ref{state}) reads: Find $y\in \widetilde{H}^{\frac{\alpha}{2}}(\Omega)$ such that
\begin{eqnarray}\label{weak_state_eq1}
a(y,v)=(f+u,v), \ \ \forall v\in \widetilde{H}^{\frac{\alpha}{2}}(\Omega).
   \end{eqnarray}
Here \begin{eqnarray*}
a(y,v)=\frac{C(d,\alpha) }{2}\int\int_{R^d\times R^d} \frac{(y(x)-y(w))(v(x)-v(w))}{|x-w|^{d+\alpha}}dxdw.
\end{eqnarray*}
We define
$$ \|y\|_{\widetilde{H}^{\frac{\alpha}{2}}(\Omega)}^2:=a(y,y)=\frac{C(d,\alpha)}{2}|y|^2_{H^{\frac{\alpha}{2}}(R^d)}.$$
 As the $H^{\frac{\alpha}{2}}(R^d)$ seminorm is equivalent to the $H^{\frac{\alpha}{2}}(R^d)$ norm on $ \widetilde{H}^{\frac{\alpha}{2}}(\Omega)$ (see \cite{Acosta2}), by Lax-Milgram theorem, the solution $y\in \widetilde{H}^{\frac{\alpha}{2}}(\Omega)$ exists and is unique.

 For the state equation with the right hand $f$ we can define a linear and bounded solution operator $\mathcal{S}: L^2(\Omega)\longrightarrow\widetilde{H}^{\frac{\alpha}{2}}(\Omega)$ such that $y = \mathcal{S}f$.
Moreover, the following regularity result holds for the state equation.
\begin{lemma}(\cite{bor})\label{state_regularity2}
For $f(x)+u(x)\in L^2(\Omega)$, there exists a solution $y\in  {H}^{{\frac{\alpha}{2}}+\sigma-\epsilon}(\Omega)$ satisfies
\begin{eqnarray*}
|y|_{{H}^{{\frac{\alpha}{2}}+\sigma-\epsilon}(\Omega)}\leq \frac{C(\Omega,d,\alpha)}{\epsilon^{\tau}}\|f+u\|_{L^2(\Omega)}, \forall 0<\epsilon <{\frac{\alpha}{2}}.
\end{eqnarray*}
 Here $\sigma=\min\{\frac{\alpha}{2},\frac{1}{2}\}$, $\tau=\frac{1}{2}$ for $1<\alpha<2$ and $\tau=\frac{1}{2}+\zeta$ for $0<\alpha\leq1$ as well as a constant $\zeta$ depending on $\Omega$ and $d$.
 %For $\alpha=1,$ we have
% \begin{eqnarray*}
%|y|_{{H}^{1-\epsilon}(\Omega)}\leq \frac{C(\Omega,d)}{\epsilon}\|f\|_{L^{\infty}(\Omega)}, \forall \epsilon >0.
%\end{eqnarray*}
\end{lemma}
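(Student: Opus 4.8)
Existence and uniqueness of $y$ are already secured by the Lax--Milgram argument, so the content of the lemma is the quantitative regularity bound; the plan is to separate an interior estimate, which is essentially unconstrained, from a boundary-layer estimate, which is what fixes both the exponent $\frac{\alpha}{2}+\sigma$ and the blow-up rate $\epsilon^{-\tau}$. The starting point is the energy bound: testing \eqref{weak_state_eq1} with $v=y$ and using the equivalence of the $H^{\frac{\alpha}{2}}(R^d)$ seminorm with the full norm on $\widetilde{H}^{\frac{\alpha}{2}}(\Omega)$ (\cite{Acosta2}) gives $\|y\|_{\widetilde{H}^{\frac{\alpha}{2}}(\Omega)}\lesssim\|f+u\|_{L^2(\Omega)}$ with an absolute constant; everything below upgrades this single estimate.

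For the interior, on any ball $B$ with $\overline{B}\subset\Omega$ I would use the commutator/localization technique for the nonlocal operator: since $y\in L^2(R^d)\cap H^{\frac{\alpha}{2}}(R^d)$ and $(-\Delta)^{\frac{\alpha}{2}}y=f+u\in L^2(\Omega)$, writing the equation for a cutoff $\chi y$ produces a remainder that is smoothing away from $\partial\Omega$ and controlled by the already-known norms of $y$; combined with the mapping property $(-\Delta)^{\frac{\alpha}{2}}\colon H^s\to H^{s-\alpha}$ (\cite{bon}) this yields $y\in H^{\alpha}_{\mathrm{loc}}(\Omega)$ with constants that stay uniformly bounded as long as one stays a fixed distance from $\partial\Omega$. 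Since the interior regularity exponent $\alpha$ exceeds the target $\frac{\alpha}{2}+\sigma-\epsilon$, the interior contribution to $|y|_{H^{\frac{\alpha}{2}+\sigma-\epsilon}(\Omega)}$ is then bounded by $C(\Omega,d,\alpha)\|f+u\|_{L^2(\Omega)}$ with $C$ independent of $\epsilon$.

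The boundary layer is the heart of the matter. I would invoke the fine boundary behavior of solutions of the Dirichlet problem for $(-\Delta)^{\frac{\alpha}{2}}$ on Lipschitz domains, namely $|y(x)|\lesssim d(x)^{\alpha/2}$ and $|\nabla y(x)|\lesssim d(x)^{\alpha/2-1}$ with constants controlled by $\|f+u\|_{L^2(\Omega)}$, and decompose a tubular neighborhood of $\partial\Omega$ into dyadic shells $S_j=\{x\in\Omega\colon 2^{-j-1}<d(x)\le 2^{-j}\}$, $j\ge 0$. Scaling the gradient bound on $S_j$ and inserting it into the Gagliardo seminorm localized to $S_j$, one finds that the contribution of $S_j$ to $|y|^2_{H^{\frac{\alpha}{2}+\sigma-\epsilon}(\Omega)}$ scales, up to the nonlocal tail corrections, like $2^{-2j\epsilon}\|f+u\|_{L^2(\Omega)}^2$; here the exponent $\sigma=\min\{\frac{\alpha}{2},\frac12\}$ is precisely the largest power for which the shell sum sits below the critical Sobolev threshold $\frac{\alpha+1}{2}$, and $-2j\epsilon$ is the resulting gain. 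Summing the geometric series $\sum_{j\ge0}2^{-2j\epsilon}\le C(1-2^{-2\epsilon})^{-1}\sim C\epsilon^{-1}$ and taking a square root gives the factor $\epsilon^{-1/2}$, hence $\tau=\frac12$ when $1<\alpha<2$; equivalently this is a real-interpolation estimate between $\widetilde{H}^{\frac{\alpha}{2}}(\Omega)$ (constant $1$) and a weighted/H\"older endpoint space near $\partial\Omega$, the $(1-\theta)^{-1/2}$ behavior of the interpolation constant with $1-\theta\sim\epsilon$ producing the same $\epsilon^{-1/2}$. In the regime $0<\alpha\le1$ the diffusion is weaker, the weighted endpoint is harder to reach, and the nonlocal tails coupling different shells contribute one more summation, which costs an extra algebraic factor $\epsilon^{-\zeta}$ with $\zeta=\zeta(\Omega,d)$; this is exactly why $\tau=\frac12$ is replaced by $\tau=\frac12+\zeta$.

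Assembling the interior and shell estimates, together with a standard gluing step that controls the cross terms in the double integral defining the seminorm (points in well-separated shells see an integrable kernel and contribute only $\|y\|_{L^2(\Omega)}^2$), yields the stated bound. I expect the two genuine obstacles to be: (i) making the shell argument quantitative in $\epsilon$, since the nonlocal tail integrals couple all shells and one must verify that the coupling is geometrically summable with precisely the stated $\epsilon$-rate rather than a worse power; and (ii) the merely Lipschitz regularity of $\partial\Omega$, which rules out explicit barriers and forces one to localize on Lipschitz graphs and to use the Ros-Oton--Serra/Borthagaray-type boundary estimates as a black box. Once these are in place, the constant $C(\Omega,d,\alpha)$ and the exponents $\tau$ come out as claimed.
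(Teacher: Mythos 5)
This lemma is not proved in the paper at all: it is quoted verbatim from the cited reference \cite{bor} (Borthagaray--Leykekhman--Nochetto), so the only ``proof'' the paper offers is the citation. Your attempt is therefore a reconstruction of the regularity theory itself, and as written it has a genuine gap in the step that carries all the weight. The boundary-layer argument rests on the pointwise bounds $|y(x)|\lesssim d(x)^{\alpha/2}$ and $|\nabla y(x)|\lesssim d(x)^{\alpha/2-1}$ ``with constants controlled by $\|f+u\|_{L^2(\Omega)}$.'' Such Ros-Oton--Serra type estimates require essentially bounded data and a $C^{1,1}$ boundary; under the hypotheses of the lemma ($f+u\in L^2(\Omega)$ only, $\Omega$ Lipschitz) they are simply unavailable. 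Indeed, for $d=2$ and $\alpha\le 1$ the solution need not even be in $L^\infty(\Omega)$, since $H^{\alpha}_{\mathrm{loc}}\not\subset L^\infty$ when $\alpha\le d/2$, so no pointwise barrier with an $L^2$-controlled constant can hold; and on a merely Lipschitz domain the boundary behavior is not governed by the exponent $\alpha/2$ in the first place. Consequently the dyadic-shell computation that produces the factor $2^{-2j\epsilon}$, and hence the claimed $\epsilon^{-1/2}$ after summation, has no valid input, and the heuristic explanation of the extra $\epsilon^{-\zeta}$ for $0<\alpha\le 1$ does not identify the actual source of that loss.

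The mechanism in the cited literature is different and avoids pointwise information entirely: one proves an endpoint Besov estimate of the form $|y|_{B^{\alpha/2+\sigma}_{2,\infty}}\lesssim\|f+u\|_{L^2(\Omega)}$ by purely variational means (tangential translation/difference-quotient arguments in the spirit of Savar\'e, combined with localization and fractional Hardy-type inequalities, which is where the Lipschitz character of $\partial\Omega$ and the restriction $\sigma=\min\{\frac{\alpha}{2},\frac12\}$ enter), and then the stated bound follows from the embedding $B^{\alpha/2+\sigma}_{2,\infty}(\Omega)\hookrightarrow H^{\alpha/2+\sigma-\epsilon}(\Omega)$, whose constant blows up like $\epsilon^{-1/2}$; the additional $\epsilon^{-\zeta}$ for $0<\alpha\le 1$ reflects the delicate treatment of the zero extension and Hardy inequality near $s=\tfrac12$ on Lipschitz domains, not an extra shell summation. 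Your interior argument (cutoff plus the mapping property of $(-\Delta)^{\alpha/2}$) is fine as far as it goes, but to repair the proof you would need to replace the barrier-based boundary layer by this translation/interpolation route, or else simply cite \cite{bor} as the paper does.
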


The weak formulation of the optimal control problem (\ref{object})-(\ref{state}) reads:
 \begin{eqnarray}\label{weak_object}
  \min\limits_{y\in \widetilde{H}^{\frac{\alpha}{2}}(\Omega),\ u\in  U_{ad}} J(y,u)
\end{eqnarray}
subject to
\begin{eqnarray}\label{weak_state}
a(y,v)=(f+u,v), \ \ \forall v\in \widetilde{H}^{\frac{\alpha}{2}}(\Omega).
   \end{eqnarray}
Since the $J$ is strictly convex and weakly lower semicontinuous, this problem admits a unique optimal solution $(y,u)\in\widetilde{H}^{\frac{\alpha}{2}}(\Omega)\times L^2(\Omega)$.

In order to obtain optimality conditions for (\ref{weak_object})-(\ref{weak_state}), we introduce the following adjoint state $p$ as follows:
\begin{eqnarray}\label{weak_adjoint}
a(w,p)=(y-y_d,w), \ \ \forall w\in \widetilde{H}^{\frac{\alpha}{2}}(\Omega).
   \end{eqnarray}
Set
$$
j_1(u)= \frac{1}{2} \|\mathcal{S}(u+f)-y_d\|^2_{L^2(\Omega)}
  +\frac{\gamma}{2} \|u\|^2_{L^2(\Omega)}
$$
and  $j_2(u)=\|u\|_{L^1(\Omega)}$. Then we obtain the reduced  problem of (\ref{object}):
 \begin{eqnarray}\label{object1}
  \min\limits_{ u\in  U_{ad}}\hat{J}(u)= \min\limits_{ u\in  U_{ad}} j_1(u)+\beta j_2(u).
\end{eqnarray}
Although the reduced cost functional (\ref{object1}) is nonsmooth, it consists in the sum of a regular part and a convex nondifferentiable term. Thanks to the structure, optimality conditions can still be established according to the following result.
\begin{lemma}(\cite{loffe})\label{lamdastar}
Let $\hat{J}(u)$ be defined as in (\ref{object1}). The element $u\in U_{ad}$ is a minimizer of $\hat{J}(u)$ over $U_{ad}$ if and only if there exists a subgradient $\lambda^*\in \partial \hat{J}(u)$ such that
 \begin{eqnarray}
 (\lambda^*,v-u)\geq 0,\ \forall v\in U_{ad}.
\end{eqnarray}
\end{lemma}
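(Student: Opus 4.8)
The plan is to derive the characterization from the elementary theory of subdifferentials of convex functions together with the fact that a constrained minimizer of a convex functional is characterized by a variational inequality. Since $\hat J = j_1 + \beta j_2$ with $j_1$ convex and (Fréchet) differentiable on $L^2(\Omega)$ and $j_2$ convex and continuous (hence with nonempty subdifferential everywhere), $\hat J$ is a proper convex functional on $L^2(\Omega)$ whose effective domain is all of $L^2(\Omega)$. First I would recall the standard fact that for a convex functional $\Phi$ on a convex set $U_{ad}$, an element $u$ minimizes $\Phi$ over $U_{ad}$ if and only if $0 \in \partial(\Phi + I_{U_{ad}})(u)$, where $I_{U_{ad}}$ is the indicator function of $U_{ad}$; equivalently, because $U_{ad}$ is closed and convex and $\hat J$ is continuous (so that the Moreau--Rockafellar sum rule applies with no qualification gap), this is equivalent to the existence of $\lambda^* \in \partial \hat J(u)$ with $-\lambda^* \in \partial I_{U_{ad}}(u) = N_{U_{ad}}(u)$, the normal cone. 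Unwinding the definition of the normal cone, $-\lambda^* \in N_{U_{ad}}(u)$ means precisely $(-\lambda^*, v - u) \le 0$ for all $v \in U_{ad}$, i.e. $(\lambda^*, v-u) \ge 0$ for all $v \in U_{ad}$, which is the asserted inequality.

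The two directions I would argue as follows. For sufficiency, suppose $\lambda^* \in \partial \hat J(u)$ satisfies $(\lambda^*, v-u) \ge 0$ for all $v \in U_{ad}$. By the defining inequality of the subgradient, $\hat J(v) - \hat J(u) \ge (\lambda^*, v - u) \ge 0$ for every $v \in U_{ad}$, so $u$ is a minimizer. For necessity, suppose $u$ minimizes $\hat J$ over $U_{ad}$. Then $u$ minimizes the unconstrained functional $\hat J + I_{U_{ad}}$ over $L^2(\Omega)$, so $0 \in \partial(\hat J + I_{U_{ad}})(u)$; since $\hat J$ is continuous at the point $u \in U_{ad} = \mathrm{dom}\, I_{U_{ad}}$, the Moreau--Rockafellar theorem gives $\partial(\hat J + I_{U_{ad}})(u) = \partial \hat J(u) + N_{U_{ad}}(u)$, so there exist $\lambda^* \in \partial \hat J(u)$ and $\mu \in N_{U_{ad}}(u)$ with $\lambda^* + \mu = 0$. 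Then for any $v \in U_{ad}$, $(\lambda^*, v - u) = -(\mu, v-u) \ge 0$ since $\mu \in N_{U_{ad}}(u)$.

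The only genuinely delicate point is the constraint qualification needed for the sum rule in the necessity direction: one must know that $\partial(\hat J + I_{U_{ad}})(u)$ splits as $\partial \hat J(u) + N_{U_{ad}}(u)$. This is where the continuity (indeed local Lipschitz behavior) of $\hat J$ on $L^2(\Omega)$ — coming from the boundedness of $\mathcal S$ and the continuity of the $L^2$ and $L^1$ norms — is used, so that $\mathrm{int}(\mathrm{dom}\,\hat J) \cap \mathrm{dom}\, I_{U_{ad}} \ne \emptyset$. With that in hand the rest is just rewriting definitions. I would remark that this is exactly the abstract statement quoted from \cite{loffe}, so in the paper the proof can legitimately be compressed to: apply the cited result to $\hat J$, whose convexity and lower semicontinuity have already been established, checking only that the effective domain is the whole space so the qualification holds.
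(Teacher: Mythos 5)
Your proposal is correct. Note, however, that the paper itself does not prove this statement at all: Lemma~3.1 is quoted directly from the cited reference \cite{loffe} and is used as a black box, so there is no in-paper argument to compare against. What you supply is the standard self-contained convex-analysis justification: rewrite the constrained minimization as unconstrained minimization of $\hat J + I_{U_{ad}}$, observe $0\in\partial(\hat J+I_{U_{ad}})(u)$, and split the subdifferential by the Moreau--Rockafellar sum rule into $\partial\hat J(u)+N_{U_{ad}}(u)$, the normal-cone membership being exactly the variational inequality; sufficiency needs only the subgradient inequality and no sum rule. The one point you rightly flag as delicate, the constraint qualification, is indeed satisfied here: $\hat J$ is finite and continuous on all of $L^2(\Omega)$, since $j_1$ is continuous through the bounded solution operator $\mathcal S$ and $j_2(u)=\|u\|_{L^1(\Omega)}\leq |\Omega|^{1/2}\|u\|_{L^2(\Omega)}$ on the bounded domain, while $U_{ad}$ is nonempty, closed and convex, so $\mathrm{dom}\,\hat J=L^2(\Omega)$ and the qualification holds trivially. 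Your closing remark matches the paper's actual practice: in the text the lemma is simply invoked from \cite{loffe}, and the differentiable-plus-convex splitting you mention is what the paper then exploits in its proof of the optimality conditions (Theorem~3.1), where $\partial\hat J(u)=j_1'(u)+\beta\,\partial j_2(u)$ is used.
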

\begin{theorem}(Optimality conditions)
 If $(y,u)$ is an optimal solution to (\ref{object1}), then it satisfies the following variational inequality
 \begin{eqnarray}\label{variational inequality}
 (p+\gamma u+\beta \lambda, v-u)\geq0,\ \forall v\in  U_{ad},
\end{eqnarray}
where $p$ denotes the solution to (\ref{weak_adjoint}) and $\lambda\in \partial j_2(u)$.
\end{theorem}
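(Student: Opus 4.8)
The plan is to combine Lemma~\ref{lamdastar} with the sum rule for subdifferentials and an explicit computation of the derivative of the smooth part $j_1$. First I would note that $j_1$ is continuously Fréchet differentiable on $L^2(\Omega)$, since it is the composition of the bounded linear solution operator $\mathcal{S}$ with a quadratic functional plus the quadratic term $\tfrac{\gamma}{2}\|u\|^2_{L^2(\Omega)}$. Computing the derivative and using the definition of the adjoint state $p$ in (\ref{weak_adjoint}), together with the self-adjointness of $a(\cdot,\cdot)$ (which follows from the symmetry of the kernel $|x-w|^{-d-\alpha}$), I would identify
\[
j_1'(u) = \mathcal{S}^*(\mathcal{S}(u+f)-y_d) + \gamma u = p + \gamma u,
\]
where $p$ is the solution of (\ref{weak_adjoint}) associated with $y=\mathcal{S}(u+f)$. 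This is the key reduction: the abstract subgradient of the composite term becomes the concrete quantity $p+\gamma u$.

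Next I would invoke the sum rule $\partial \hat J(u) = \partial(j_1 + \beta j_2)(u) = \{j_1'(u)\} + \beta\,\partial j_2(u)$, valid because $j_1$ is everywhere differentiable (so Moreau--Rockafellar applies with no constraint qualification issue). Hence every subgradient $\lambda^* \in \partial\hat J(u)$ has the form $\lambda^* = p + \gamma u + \beta\lambda$ with $\lambda \in \partial j_2(u)$. Substituting this representation into the variational inequality $(\lambda^*, v-u)\ge 0$ from Lemma~\ref{lamdastar} yields exactly (\ref{variational inequality}). Conversely, any $(y,u)$ satisfying (\ref{variational inequality}) for some $\lambda\in\partial j_2(u)$ produces a subgradient of $\hat J$ certifying optimality, so the characterization is in fact an equivalence — though the theorem as stated only asserts the forward direction, which is all I need.

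The main obstacle is the rigorous justification of the sum rule and the differentiability of $j_1$ in the nonsmooth setting: one must confirm that $j_2 = \|\cdot\|_{L^1(\Omega)}$ is proper, convex, and continuous on $L^2(\Omega)$ (continuity follows from the continuous embedding $L^2(\Omega)\hookrightarrow L^1(\Omega)$ on the bounded domain $\Omega$), so that the classical additivity theorem for subdifferentials applies; and that the adjoint identity $\mathcal{S}^* = \mathcal{S}$ in the appropriate duality is correctly exploited when rewriting $j_1'(u)$ in terms of $p$. Everything else — the chain rule for $j_1$, the identification of $p$ via (\ref{weak_adjoint}), and the algebraic substitution into the variational inequality — is routine once these two points are in place.
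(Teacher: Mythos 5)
Your proposal is correct and follows essentially the same route as the paper: both apply Lemma~\ref{lamdastar} together with the sum rule $\partial\hat{J}(u)=j_1'(u)+\beta\,\partial j_2(u)$ and the identification $j_1'(u)=p+\gamma u$ via the adjoint state. The only cosmetic difference is that you compute $j_1'(u)$ through the solution operator and the self-adjointness of $a(\cdot,\cdot)$, whereas the paper evaluates the directional derivative by a direct difference-quotient limit before invoking (\ref{weak_adjoint}).
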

\begin{proof}
 Since the convex functional $j_1(u)$ is Fr$\rm{\acute{e}}$chet differentiable we immediately have that $\partial j_1(u)=j'_1(u).$
%From the optimality of $u$ we obtain that
%$$ j_1(u)+\beta j_2(u)\leq j_1(w)+\beta j_2(w),\ \forall w\in B_\epsilon(u).$$
%Let $w=u+t(v-u),\ v\in U_{ad},\ t>0$ sufficiently small, it follows from the definition of convex functional
% \begin{align*}
%0&\leq j_1(u+t(v-u))-j_1(u)+\beta(j_2(u+t(v-u))-j_2(u))\\
%&\leq j_1(u+t(v-u))-j_1(u)+\beta(tj_2(v)+(1-t)j_2(u)-j_2(u)).
%  \end{align*}
%  Further, dividing by $t$ and taking the limit on both sides we obtain
%   \begin{align}
%\lim \limits_{t \to 0} j'_1(u)(v-u)+\beta(j_2(v)-j_2(u))\geq0.
%  \end{align}
In view of the fact $j_2(u)$ is convex, that
$$\partial \hat{J}(u)=j_1'(u)+\beta\partial j_2(u).$$
  By simple calculations, we have that
\begin{align*}
 j_1'(u)(v-u)&=\lim\limits_{t \to 0}\frac{1}{2t}\int_{\Omega}\Big((y(u+t(v-u))-y_d)^2-(y(u)-y_d)^2\Big)dx  +\lim \limits_{t \to 0}\frac{\gamma}{2t}\int_{\Omega}\left((u+t(v-u))^{2}-u^{2}\right)dx\nonumber\\
 &=\int_{\Omega}(y(u)-y_{d})y'(u)(v-u)dx+\gamma\int_{\Omega}u(v-u)\\
 &=(p+\gamma u,v-u).
 \end{align*}
 According to Lemma \ref{lamdastar}, there exists a multiplier $\lambda\in \partial j_2(u),$ such that
$$
(p+\gamma u+\beta \lambda, v-u)\geq0.
$$
\end{proof}
For $a,b\in R$,  we introduce a projection operator $\Pi_{[a, b]}: L^2(\Omega)\rightarrow U_{ad}$ defined by
 \begin{eqnarray}\label{projection operator}
 \Pi_{[a,b]}(v)=\min\{b, \max\{a,v\}\}.
  \end{eqnarray}
  Then we have the following projection formulas.
  \begin{theorem}(Projection formulas)   Suppose $(y,p,u,\lambda)$ are the optimal variables associated to (\ref{variational inequality}), then we obtain
 \begin{align}
 &u=\Pi_{[a,b]}\left(-\frac{1}{\gamma}(p+\beta\lambda)\right),\label{u}\\
 &|p|\leq \beta,\ \mbox{in}\ \{x\in\Omega,\ u=0 \},\label{p}\\
 &\lambda=\Pi_{[-1,1]}\left(-\frac{1}{\beta}p\right)\label{lambda}.
  \end{align}
 It guarantees the uniqueness of the subgradient $\lambda.$
\end{theorem}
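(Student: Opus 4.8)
The plan is to derive the three formulas from the variational inequality \eqref{variational inequality} together with the structure of $U_{ad}=\{v:a\le v\le b\}$ and the explicit description of the subdifferential $\partial j_2$. The starting point is the pointwise characterization of \eqref{variational inequality}: since the inequality $(p+\gamma u+\beta\lambda,v-u)\ge 0$ must hold for all $v\in U_{ad}$ and $U_{ad}$ is defined by pointwise box constraints, a standard localization argument (testing with $v$ that differs from $u$ only on a set of small measure around an arbitrary Lebesgue point) shows that for a.e.\ $x\in\Omega$, $(p(x)+\gamma u(x)+\beta\lambda(x))(v-u(x))\ge 0$ for all $v\in[a,b]$. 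This is exactly the condition that $u(x)$ is the projection onto $[a,b]$ of the unconstrained minimizer, i.e.\ $u(x)=\Pi_{[a,b]}(u(x)-\theta(p(x)+\gamma u(x)+\beta\lambda(x)))$ for any $\theta>0$; choosing $\theta=1/\gamma$ and simplifying $u-\tfrac1\gamma(p+\gamma u+\beta\lambda)=-\tfrac1\gamma(p+\beta\lambda)$ gives \eqref{u}.

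For \eqref{lambda} and \eqref{p} I would use the explicit form of $\lambda\in\partial j_2(u)=\partial\|\cdot\|_{L^1(\Omega)}(u)$: almost everywhere $\lambda(x)=1$ where $u(x)>0$, $\lambda(x)=-1$ where $u(x)<0$, and $\lambda(x)\in[-1,1]$ where $u(x)=0$. On $\{u=0\}$ the pointwise inequality forces $(p+\beta\lambda)(v)\ge 0$ for all $v\in[a,b]$; since $a<0<b$ this can hold only if $p(x)+\beta\lambda(x)=0$, hence $\lambda(x)=-p(x)/\beta$, and because $\lambda(x)\in[-1,1]$ we get $|p(x)|\le\beta$ on $\{u=0\}$, which is \eqref{p}. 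Away from $\{u=0\}$ one distinguishes the cases where the projection in \eqref{u} is active or inactive: where $u(x)\in(a,b)$ is interior, \eqref{u} gives $\gamma u=-(p+\beta\lambda)$, and combining with $\lambda=\pm1$ (forced by $\mathrm{sign}\,u$) one checks that $-p/\beta$ lies outside $(-1,1)$ on the appropriate side so that $\Pi_{[-1,1]}(-p/\beta)=\lambda$; where the projection is active ($u=a$ or $u=b$) a short sign analysis of the pointwise inequality again yields $\lambda(x)=\Pi_{[-1,1]}(-p(x)/\beta)$. Assembling the three regimes gives \eqref{lambda} a.e.\ on $\Omega$, and since the right-hand side of \eqref{lambda} is a single-valued function of $p$, this simultaneously proves that the subgradient $\lambda$ is uniquely determined.

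The main obstacle is the bookkeeping in the last step: one must carefully treat the several cases ($u$ interior vs.\ $u$ at $a$ vs.\ $u$ at $b$, and within each the sign of $u$ determining the value of $\lambda$) and verify that in every case the candidate value $\Pi_{[-1,1]}(-p/\beta)$ is consistent with both membership in $\partial j_2(u)$ and the pointwise variational inequality; the key quantitative input that makes all cases close is the hypothesis $a<0<b$, which guarantees that on $\{u=0\}$ the test direction $v-u$ can be taken of either sign. The localization from the integral inequality \eqref{variational inequality} to its pointwise form is routine (density of characteristic functions / Lebesgue differentiation), and the projection-operator identities $\Pi_{[a,b]}$, $\Pi_{[-1,1]}$ are then just algebraic reformulations of the pointwise optimality conditions.
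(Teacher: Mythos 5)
Your proposal is correct and follows essentially the same route as the paper: the pointwise characterization of $\partial j_2(u)$ ($\lambda=\pm1$ where $u\gtrless0$, $\lambda\in[-1,1]$ where $u=0$), the projection identity \eqref{u} (which the paper simply cites as standard and you derive from the localized variational inequality), and a sign/case analysis exploiting $a<0<b$ to get \eqref{p} and \eqref{lambda}. The only cosmetic difference is that you organize the final case analysis by the value of $u$ (zero, interior, at the bounds) while the paper organizes it by the size of $p$ relative to $\beta$; the content and the conclusion, including uniqueness of $\lambda$ as a single-valued function of $p$, are the same.
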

\begin{proof}
The derivation of the formula (\ref{u}) is standard in control theory. According to \cite{loffe}, we know that $\lambda\in \partial j_2(u)$ if and only if
     \begin{eqnarray}\label{lambda3}\left\{ \begin{aligned}
&\lambda(x)=1,  &u(x)>0,\\
&\lambda(x)=-1, &u(x)<0,\\
& \lambda(x)\in[-1,1], &u(x)=0.
   \end{aligned}\right.
  \end{eqnarray}
  By (\ref{u}), (\ref{lambda3}) and $a<0<b$, we arrive at
      \begin{eqnarray*}\left\{ \begin{aligned}
 &u(x)=0 \xrightarrow{(\ref{lambda3})}\lambda(x)\in[-1,1] \xrightarrow{(\ref{u})}\mid p\mid\leq \beta,\\
 &u(x)<0 \xrightarrow{(\ref{lambda3})}\lambda=-1\xrightarrow{(\ref{u})}p+\beta\lambda>0 \Rightarrow p>\beta,\\
 &u(x)>0\xrightarrow{(\ref{lambda3})}\lambda=1\xrightarrow{(\ref{u})}p+\beta\lambda <0 \Rightarrow p< -\beta.
   \end{aligned}\right.
  \end{eqnarray*}
These three properties are equivalent to (\ref{p}). Therefore, (\ref{u}), (\ref{p}), (\ref{lambda3}) the previous estimate allow us to deduce (\ref{lambda})
     \begin{eqnarray*}\left\{ \begin{aligned}
 &\mid p\mid\leq \beta\xrightarrow{(\ref{p})}u(x)=0 \xrightarrow{(\ref{lambda3})} \lambda(x)\in[-1,1] \xrightarrow{(\ref{u})}p+\beta\lambda=0 \Rightarrow\lambda=\Pi_{[-1,1]}\left(-\frac{1}{\beta}p\right),\\
 & p>\beta\Rightarrow u(x)<0 \xrightarrow{(\ref{lambda3})}\lambda=-1\Rightarrow\lambda=\Pi_{[-1,1]}\left(-\frac{1}{\beta}p\right),\\
 &p< -\beta \Rightarrow u(x)>0\xrightarrow{(\ref{lambda3})}\lambda=1\Rightarrow\lambda=\Pi_{[-1,1]}\left(-\frac{1}{\beta}p\right) ,
   \end{aligned}\right.
  \end{eqnarray*}
  which completes the proof.
\end{proof}

At end  we present the following first order optimality conditions for above optimal control problems.
\begin{theorem} Let $(y,u)$ be the solution of the optimal control problem  (\ref{weak_object})-(\ref{weak_state}). Then there exists  an adjoint state $p$, $\lambda\in \partial j_2(u)$ such that
   \begin{eqnarray}\label{zuiyou}\left\{ \begin{aligned}
&a(y,v)= (f+u,v)\ &\forall v \in \widetilde{H}^{\frac{\alpha}{2}}(\Omega),\\
&a (w,p)=(y-y_d,w)\ &\forall w \in \widetilde{H}^{\frac{\alpha}{2}}(\Omega),\\
& (p+\gamma u+\beta \lambda, v-u)\geq0,\ &\forall v\in  U_{ad},
   \end{aligned}\right.
  \end{eqnarray}
 \end{theorem}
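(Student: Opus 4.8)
The plan is to assemble the first-order system \eqref{zuiyou} by collecting results already proved in this section: the state equation \eqref{weak_state} is part of the hypothesis that $(y,u)$ solves \eqref{weak_object}--\eqref{weak_state}, the adjoint equation is the \emph{definition} \eqref{weak_adjoint} of $p$, and the variational inequality is the content of the Optimality conditions theorem \eqref{variational inequality}. So the proof is essentially a matter of exhibiting the chain of equivalences that connects ``$(y,u)$ minimizes $J$'' to ``$(y,u)$ satisfies \eqref{zuiyou}''.

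First I would recall that, since $\mathcal{S}$ is linear and bounded and $J$ is strictly convex and weakly lower semicontinuous, minimizing $J(y,u)$ over $\widetilde{H}^{\frac{\alpha}{2}}(\Omega)\times U_{ad}$ subject to $y=\mathcal{S}(f+u)$ is equivalent to minimizing the reduced functional $\hat{J}(u)=j_1(u)+\beta j_2(u)$ over $U_{ad}$, which is precisely problem \eqref{object1}. Then I would invoke Lemma~\ref{lamdastar}: $u\in U_{ad}$ minimizes $\hat{J}$ iff there is $\lambda^*\in\partial\hat{J}(u)$ with $(\lambda^*,v-u)\ge 0$ for all $v\in U_{ad}$. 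Next, using that $j_1$ is Fr\'echet differentiable (so $\partial j_1(u)=\{j_1'(u)\}$) and $j_2$ is convex, the sum rule for subdifferentials gives $\partial\hat{J}(u)=j_1'(u)+\beta\,\partial j_2(u)$, so every such $\lambda^*$ has the form $\lambda^*=j_1'(u)+\beta\lambda$ with $\lambda\in\partial j_2(u)$.

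The one computation to reproduce (already carried out in the proof of the Optimality conditions theorem) is the identification $j_1'(u)(v-u)=(p+\gamma u,\,v-u)$: differentiating the quadratic terms in $j_1$ along the direction $v-u$, using the chain rule $y'(u)(v-u)=\mathcal{S}(v-u)$, and then testing the adjoint equation \eqref{weak_adjoint} with $w=\mathcal{S}(v-u)$ and the state sensitivity equation with $v=p$ to rewrite $(\mathcal{S}(v-u),\,y-y_d)$ as $(p+\gamma u,v-u)$ after the Riesz step. Substituting this into Lemma~\ref{lamdastar} yields $(p+\gamma u+\beta\lambda,\,v-u)\ge 0$ for all $v\in U_{ad}$, which is the third line of \eqref{zuiyou}; the first line holds because $y=\mathcal{S}(f+u)$ is the hypothesis, and the second line holds by the very definition of $p$. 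Collecting the three relations gives the claimed system.

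I do not expect a genuine obstacle here, since the statement is a \emph{summary} theorem whose ingredients are all established above; the only point requiring care is to state clearly that the sum rule $\partial(j_1+\beta j_2)=j_1'+\beta\partial j_2$ applies — this needs $j_1$ continuous (indeed differentiable) at a point of $\mathrm{dom}\,j_2$, which is immediate — and to make explicit that passing from the constrained problem \eqref{weak_object}--\eqref{weak_state} to the reduced problem \eqref{object1} is lossless because the state constraint is an equality solved uniquely by $\mathcal{S}$. Everything else is bookkeeping: the existence of $p$ and $\lambda$ asserted in the statement is guaranteed by \eqref{weak_adjoint} (Lax--Milgram, already invoked for the state) and by the nonemptiness of $\partial j_2(u)$ for the convex functional $j_2$, respectively.
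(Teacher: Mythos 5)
Your proposal is correct and follows essentially the same route as the paper: the paper states this theorem as a summary with no separate proof, since the first line is the state constraint, the second is the definition \eqref{weak_adjoint} of $p$, and the variational inequality was already derived in the preceding Optimality conditions theorem via Lemma \ref{lamdastar}, the sum rule $\partial\hat{J}(u)=j_1'(u)+\beta\partial j_2(u)$, and the identification $j_1'(u)(v-u)=(p+\gamma u,v-u)$ — exactly the chain you reproduce. Your added remarks on the applicability of the sum rule and the losslessness of the reduction to \eqref{object1} are sound and consistent with the paper's argument.
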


\section{Finite element approximation method and a posteriori error estimate}
\ \ \ We begin by partitioning the domain $\Omega$ into simplices $K$ with size $h_K:=| K | ^{\frac{1}{d}}$, forming a conforming partition $ \mathcal{T}_h= \{K\}$. We then define $h_{\mathcal{T}_h}=\max\limits_{K\in \mathcal{T}_h}h_K $ and denote by $\mathbb{T}$ the collection of conforming and shape regular meshes that are refinements of an initial mesh $\mathcal{T}_{h_{0}}$.
For $\mathcal{T}_h\in\mathbb{T},$  let  $\mathbb{V}_{\mathcal{T}_h}$ be the finite element space consisting of continuous piecewise linear functions over the triangulation $\mathcal{T}_h$
  \begin{eqnarray*}
  \mathbb{V}_{\mathcal{T}_h}=\{v_{\mathcal{T}_h}\in C(\bar{\Omega})\cap H_0^1(\Omega);\  v_{\mathcal{T}_h}| _K\in \mathbb{P}_1(K), \forall K\in \mathcal{T}_h\}.
    \end{eqnarray*}
%Let $\mathbb{V}_{\mathcal{T}_h} =\mathbb{V}_{\mathcal{T}_h}\cap H_0^1(\Omega).$
  For all elements $K\in \mathcal{T}_h$ and $k\in \mathbb{N}_0$, we introduce the $k$-th order element patch inductively by
 \begin{align*}
  & \Omega_h^0(K):=K, \mathcal{T}_h^0(K):=\{K\}\\
 &\Omega_h^k(K):=\mathrm{interior}(\bigcup\limits_{{K'}\in  \mathcal{T}_h^k(K)}\overline{K'}),\ \mbox{where}\ \mathcal{T}_h^k(K):=\{K'\in \mathcal{T}_h: \overline{K'}\cap \overline{\Omega_h^{k-1}(K)}\neq\emptyset \}.
 \end{align*}

 The finite element approximation of the optimal control problem  (\ref{weak_object})-(\ref{weak_state}) can be characterized as
$$
  \min\limits_{(y_{\mathcal{T}_h},u_{\mathcal{T}_h})\in \mathbb{V}_{\mathcal{T}_h}\times U_{ad}} J(y_{\mathcal{T}_h},u_{\mathcal{T}_h})
$$
subject to
\begin{eqnarray}\label{lisanstate}
a(y_{\mathcal{T}_h},v_{\mathcal{T}_h})=(f+u_{\mathcal{T}_h},v_{\mathcal{T}_h}), \ \ \forall v_{\mathcal{T}_h}\in \mathbb{V}_{\mathcal{T}_h}.
   \end{eqnarray}
 Here the admissible set of control $U_{ad}$ is not discretized, i.e.,  the so-called
variational discretization approach.
Similar to the continuous case we have the discrete first order optimality condition
  \begin{eqnarray}\label{lisan}\left\{ \begin{aligned}
&a(y_{\mathcal{T}_h},v_{\mathcal{T}_h})=(f+u_{\mathcal{T}_h},v_{\mathcal{T}_h}), &\forall v_{\mathcal{T}_h}\in \mathbb{V}_{\mathcal{T}_h},\\
&a(w_{\mathcal{T}_h},p_{\mathcal{T}_h})=(y_{\mathcal{T}_h}-y_d,w_{\mathcal{T}_h}), &\forall w_{\mathcal{T}_h}\in \mathbb{V}_{\mathcal{T}_h},\\
& (p_{\mathcal{T}_h}+\gamma u_{\mathcal{T}_h}+\beta \lambda_{\mathcal{T}_h}, v_{\mathcal{T}_h}-u_{\mathcal{T}_h})\geq0,\ &\forall v_{\mathcal{T}_h}\in  U_{ad},
   \end{aligned}\right.
  \end{eqnarray}
  where $\lambda_{\mathcal{T}_h} \in \partial j_2(u_{\mathcal{T}_h}).$
Next, we give the following  discrete projection formula.
  \begin{lemma}
   Suppose $(y_{\mathcal{T}_h},p_{\mathcal{T}_h},u_{\mathcal{T}_h},\lambda_{\mathcal{T}_h})$ are the optimal variables associated to (\ref{lisan}), then we obtain
  \begin{align}
 &u_{\mathcal{T}_h}=\Pi_{[a,b]}\left(-\frac{1}{\gamma}(p_{\mathcal{T}_h}+\beta\lambda_{\mathcal{T}_h})\right),\\
 &|p_{\mathcal{T}_h}|\leq \beta,\ \mbox{in}\ \{x\in\Omega,\ u_{\mathcal{T}_h}=0 \},\\
 &\lambda_{\mathcal{T}_h}=\Pi_{[-1,1]}\left(-\frac{1}{\beta}p_{\mathcal{T}_h}\right)\label{lamdath}.
  \end{align}
 \end{lemma}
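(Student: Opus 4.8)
The proof of this lemma should follow exactly the same pattern as the continuous case (the previous theorem on projection formulas), since the discrete optimality system in (\ref{lisan}) has precisely the same algebraic structure as the continuous one in (\ref{zuiyou}). The only conceptual subtlety is that $U_{ad}$ is \emph{not} discretized (variational discretization), so the test functions in the variational inequality range over all of $U_{ad}$ rather than a discrete subset; this is in fact what makes the pointwise projection formula valid, so no extra care is needed beyond the continuous argument.

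\begin{proof}
The proof mirrors that of the continuous projection formulas. Since $j_2$ is convex, the characterization of its subdifferential from \cite{loffe} gives, for $\lambda_{\mathcal{T}_h}\in\partial j_2(u_{\mathcal{T}_h})$, the pointwise conditions
\begin{eqnarray*}\left\{ \begin{aligned}
&\lambda_{\mathcal{T}_h}(x)=1,  &u_{\mathcal{T}_h}(x)>0,\\
&\lambda_{\mathcal{T}_h}(x)=-1, &u_{\mathcal{T}_h}(x)<0,\\
& \lambda_{\mathcal{T}_h}(x)\in[-1,1], &u_{\mathcal{T}_h}(x)=0.
   \end{aligned}\right.
  \end{eqnarray*}
Because $U_{ad}$ is not discretized, the variational inequality in the last line of (\ref{lisan}) holds for all $v_{\mathcal{T}_h}\in U_{ad}$, and a standard pointwise argument (testing with suitable local perturbations of $u_{\mathcal{T}_h}$ that remain in $U_{ad}$) yields $u_{\mathcal{T}_h}=\Pi_{[a,b]}\big(-\frac{1}{\gamma}(p_{\mathcal{T}_h}+\beta\lambda_{\mathcal{T}_h})\big)$, which is the first identity. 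Inserting the three cases of the subdifferential characterization into this identity and using $a<0<b$ exactly as in the continuous proof, we obtain in $\{u_{\mathcal{T}_h}=0\}$ that $\lambda_{\mathcal{T}_h}\in[-1,1]$ forces $|p_{\mathcal{T}_h}|\leq\beta$; in $\{u_{\mathcal{T}_h}<0\}$ that $p_{\mathcal{T}_h}>\beta$; and in $\{u_{\mathcal{T}_h}>0\}$ that $p_{\mathcal{T}_h}<-\beta$. The first of these is the second asserted inequality, and combining all three with the subdifferential conditions shows that in every case $\lambda_{\mathcal{T}_h}=\Pi_{[-1,1]}\big(-\frac{1}{\beta}p_{\mathcal{T}_h}\big)$, which is the third identity. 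Since this last formula determines $\lambda_{\mathcal{T}_h}$ uniquely from $p_{\mathcal{T}_h}$, the subgradient $\lambda_{\mathcal{T}_h}$ is unique.
\end{proof}

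The only place where one must be slightly attentive is the derivation of the first projection formula from the variational inequality: one needs the pointwise (a.e.) characterization of minimizers of a linear functional over the box $U_{ad}$, which is legitimate precisely because the admissible set is the full $L^2$-box and not a finite element subspace. Apart from that, every step is a verbatim transcription of the continuous argument already given, so I would not expect any genuine obstacle here.
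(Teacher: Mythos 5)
Your proof is correct and follows exactly the route the paper intends: the paper states this discrete lemma without proof, implicitly appealing to the same argument as its continuous projection-formula theorem, and your write-up is a faithful transcription of that continuous argument (subdifferential characterization of $\partial j_2(u_{\mathcal{T}_h})$, the variational inequality over the undiscretized $U_{ad}$, and the case analysis using $a<0<b$). No issues.
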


   Similar to the continuous case, we can define a discrete control-to-state mapping $\mathcal{S}_{h}: L^2(\Omega)\longrightarrow  \mathbb{V}_{\mathcal{T}_h}$.
Set $ \Theta(h)=\sup\limits_{f\in L^2(\Omega), \|f\|=1}\inf\limits_{\chi_{\mathcal{T}_h}\in \mathbb{V}_{\mathcal{T}_h}}\|\mathcal{S}f-\chi_{\mathcal{T}_h}\|_{\widetilde{H}^{\frac{\alpha}{2}}(\Omega)}
.$  Then we have $\Theta(h)\ll 1 $ for $h\in (0,h_0)$ with $h_0\ll 1.$

   \begin{lemma}\label{etah}
Assume  that $\mathcal{S}f \in\widetilde{H}^{\frac{\alpha}{2}}(\Omega)$ and $\mathcal{S}_hf\in \mathbb{V}_{\mathcal{T}_h} $ are the solutions of the continuous and discretised state equation with right hand term $f\in L^2(\Omega)$. Then the following error estimates hold
 \begin{eqnarray*}
 \|\mathcal{S}f-\mathcal{S}_{h}f\|_{\widetilde{H}^{\frac{\alpha}{2}}(\Omega)}&\leq C\Theta(h)\|f\|_{L^{2}(\Omega)}
\end{eqnarray*}
and
\begin{eqnarray*}
\|\mathcal{S}f-\mathcal{S}_{h}f\|&\leq C\Theta(h)\|\mathcal{S}f-\mathcal{S}_{h}f\|_{\widetilde{H}^{\frac{\alpha}{2}}(\Omega)}.
\end{eqnarray*}
 \end{lemma}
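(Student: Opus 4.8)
The plan is to establish the two estimates in Lemma~\ref{etah} as standard consequences of Galerkin orthogonality together with the approximation property encoded in $\Theta(h)$ and the regularity result of Lemma~\ref{state_regularity2}. Write $y := \mathcal{S}f$ and $y_h := \mathcal{S}_h f$, so that $a(y,v) = (f,v)$ for all $v\in\widetilde{H}^{\frac{\alpha}{2}}(\Omega)$ and $a(y_h,v_{\mathcal{T}_h}) = (f,v_{\mathcal{T}_h})$ for all $v_{\mathcal{T}_h}\in\mathbb{V}_{\mathcal{T}_h}$, whence the Galerkin orthogonality $a(y-y_h,v_{\mathcal{T}_h}) = 0$ for all $v_{\mathcal{T}_h}\in\mathbb{V}_{\mathcal{T}_h}$. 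For the first (energy-norm) bound, I would use coercivity of $a(\cdot,\cdot)$ on $\widetilde{H}^{\frac{\alpha}{2}}(\Omega)$ (recall $\|v\|_{\widetilde{H}^{\frac{\alpha}{2}}(\Omega)}^2 = a(v,v)$ and the seminorm/norm equivalence cited from \cite{Acosta2}) together with continuity of $a(\cdot,\cdot)$: for any $\chi_{\mathcal{T}_h}\in\mathbb{V}_{\mathcal{T}_h}$,
\begin{eqnarray*}
\|y-y_h\|_{\widetilde{H}^{\frac{\alpha}{2}}(\Omega)}^2 = a(y-y_h,y-y_h) = a(y-y_h,y-\chi_{\mathcal{T}_h}) \leq \|y-y_h\|_{\widetilde{H}^{\frac{\alpha}{2}}(\Omega)}\,\|y-\chi_{\mathcal{T}_h}\|_{\widetilde{H}^{\frac{\alpha}{2}}(\Omega)},
\end{eqnarray*}
so that $\|y-y_h\|_{\widetilde{H}^{\frac{\alpha}{2}}(\Omega)} \leq \inf_{\chi_{\mathcal{T}_h}\in\mathbb{V}_{\mathcal{T}_h}}\|y-\chi_{\mathcal{T}_h}\|_{\widetilde{H}^{\frac{\alpha}{2}}(\Omega)}$; by the definition of $\Theta(h)$ (applied after normalizing $f$) this is bounded by $C\Theta(h)\|f\|_{L^2(\Omega)}$.

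For the second ($L^2$) bound I would run an Aubin--Nitsche duality argument. Introduce the auxiliary problem: find $z\in\widetilde{H}^{\frac{\alpha}{2}}(\Omega)$ with $a(w,z) = (y-y_h,w)$ for all $w\in\widetilde{H}^{\frac{\alpha}{2}}(\Omega)$, i.e.\ $z = \mathcal{S}(y-y_h)$ (using symmetry of $a$). Taking $w = y-y_h$ and using Galerkin orthogonality to subtract any discrete approximant $z_{\mathcal{T}_h}\in\mathbb{V}_{\mathcal{T}_h}$ of $z$,
\begin{eqnarray*}
\|y-y_h\|^2 = a(y-y_h,z) = a(y-y_h,z-z_{\mathcal{T}_h}) \leq \|y-y_h\|_{\widetilde{H}^{\frac{\alpha}{2}}(\Omega)}\,\|z-z_{\mathcal{T}_h}\|_{\widetilde{H}^{\frac{\alpha}{2}}(\Omega)}.
\end{eqnarray*}
Choosing $z_{\mathcal{T}_h}$ as the best approximant and again invoking the definition of $\Theta(h)$ with right-hand side $y-y_h\in L^2(\Omega)$ gives $\|z-z_{\mathcal{T}_h}\|_{\widetilde{H}^{\frac{\alpha}{2}}(\Omega)} \leq C\Theta(h)\|y-y_h\|_{L^2(\Omega)}$; dividing through by $\|y-y_h\|_{L^2(\Omega)}$ yields the claim. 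Note this route deliberately avoids needing the explicit power $\sigma-\epsilon$ of extra regularity, folding all approximation information into $\Theta(h)$; if one preferred a sharper statement one could instead insert Lemma~\ref{state_regularity2} to estimate the best approximation error of $z$ by an interpolation estimate.

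The only genuine subtlety—and the step I would watch most carefully—is making sure the quantity $\Theta(h)$ is being applied legitimately in the duality step: $\Theta(h)$ is defined as a supremum over unit-norm data in $L^2(\Omega)$ of the best energy-norm approximation error of the corresponding state $\mathcal{S}f$, so to bound $\|z - z_{\mathcal{T}_h}\|_{\widetilde{H}^{\frac{\alpha}{2}}(\Omega)}$ I must recognize $z$ as $\mathcal{S}$ applied to the data $y-y_h$ and then homogenize. This is clean because $a$ is symmetric, so the adjoint solution operator coincides with $\mathcal{S}$; were the bilinear form nonsymmetric one would need the solution operator of the adjoint problem and a separate approximation hypothesis for it. Everything else — coercivity, continuity, Cauchy--Schwarz, Galerkin orthogonality — is routine, and the shape-regularity of $\mathcal{T}_h$ is used only implicitly through the approximation property behind $\Theta(h)$.
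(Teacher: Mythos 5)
Your proof is correct and follows essentially the same route as the paper: a C\'ea-type argument via Galerkin orthogonality and the definition of $\Theta(h)$ for the energy-norm bound, followed by an Aubin--Nitsche duality argument (legitimate here since $a(\cdot,\cdot)$ is symmetric) for the $L^2$ bound. The only immaterial difference is that in the duality step you subtract the best discrete approximant of the dual solution $z=\mathcal{S}(y-y_h)$ and invoke the definition of $\Theta(h)$ directly, whereas the paper subtracts the discrete dual solution $\mathcal{S}_h g$ and reuses its first estimate; the two are equivalent.
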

\begin{proof}
We invoke the Galerkin orthogonality to arrive at
\begin{align*}
\|\mathcal{S}f-\mathcal{S}_{h}f\|_{\widetilde{H}^{\frac{\alpha}{2}}(\Omega)}^2&\leq a(\mathcal{S}f-\mathcal{S}_{h}f,\mathcal{S}f-\mathcal{S}_{h}f)=a(\mathcal{S}f-\mathcal{S}_{h}f,\mathcal{S}f-\chi_{\mathcal{T}_h}+\chi_{\mathcal{T}_h}-\mathcal{S}_{h}f)\\
&\leq C\| \mathcal{S}f-\mathcal{S}_{h}f\|_{\widetilde{H}^{\frac{\alpha}{2}}(\Omega)}\| \mathcal{S}f-\chi_{\mathcal{T}_h}\|_{\widetilde{H}^{\frac{\alpha}{2}}(\Omega)}.
\end{align*}
Thus we have
 \begin{align*}
 \|\mathcal{S}f-\mathcal{S}_{h}f\|_{\widetilde{H}^{\frac{\alpha}{2}}(\Omega)}&\leq C\Theta(h)\|f\|_{L^{2}(\Omega)}.
\end{align*}
 Next, let $m=\mathcal{S}g$ be the solution of the following problem with $g(x)\in L^2(\Omega)$
  \begin{eqnarray*}
 a(w,m)=(g,w),\  w\in \widetilde{H}^{\frac{\alpha}{2}}(\Omega).
\end{eqnarray*}
Then we have
%\begin{eqnarray*}\cases{
% (-\Delta)^{\frac{\alpha}{2}} m(x)=g(x),& $x \in \Omega,$\\
%   m(x)=0, & $x\in \Omega^c.$
%}
%   \end{eqnarray*}
$$
\| \mathcal{S}g-\mathcal{S}_{h}g\|_{\widetilde{H}^{\frac{\alpha}{2}}(\Omega)}\leq C\Theta(h)\|g\|_{L^{2}(\Omega)}.
$$
Setting $w=\mathcal{S}f-\mathcal{S}_{h}f$, to prove the second estimate, we invoke the Galerkin orthogonality and the previous estimate to arrive at
\begin{align*}
(\mathcal{S}f-\mathcal{S}_{\mathcal{T}_h}f,g)
&=a(\mathcal{S}g,\mathcal{S}f-\mathcal{S}_{\mathcal{T}_h}f)=a(\mathcal{S}g-\mathcal{S}_{\mathcal{T}_h}g+\mathcal{S}_{\mathcal{T}_h}g,\mathcal{S}f-\mathcal{S}_{\mathcal{T}_h}f)\\
&=a(\mathcal{S}g-\mathcal{S}_{\mathcal{T}_h}g,\mathcal{S}f-\mathcal{S}_{\mathcal{T}_h}f)\\
&\leq C\| \mathcal{S}g-\mathcal{S}_{\mathcal{T}_h}g\|_{\widetilde{H}^{\frac{\alpha}{2}}(\Omega)} \| \mathcal{S}f-\mathcal{S}_{\mathcal{T}_h}f\|_{\widetilde{H}^{\frac{\alpha}{2}}(\Omega)}\\
&\leq C \Theta(h)\|g\|_{L^{2}(\Omega)}\| \mathcal{S}f-\mathcal{S}_{\mathcal{T}_h}f\|_{\widetilde{H}^{\frac{\alpha}{2}}(\Omega)}.
\end{align*}
Consequently,
$$
\|\mathcal{S}f-\mathcal{S}_{h}f\|\leq C\Theta(h)\|\mathcal{S}f-\mathcal{S}_{h}f\|_{\widetilde{H}^{\frac{\alpha}{2}}(\Omega)}.
$$
\end{proof}

To derive  a posteriori error analysis we  need to introduce the following auxiliary problems
   \begin{eqnarray}\left\{ \begin{aligned}
a(\tilde{y},v)&=(f+u_{\mathcal{T}_h},v), \ \ \ \ \ \ \ \forall v\in \widetilde{H}^{\frac{\alpha}{2}}(\Omega),\label{yhu}\\
a(w,\tilde{p})&=(y_{\mathcal{T}_h}-y_{d},w), \ \ \ \ \ \ \ \forall w\in \widetilde{H}^{\frac{\alpha}{2}}(\Omega)\label{phy}.
\end{aligned}\right.
 \end{eqnarray}
Note that the residuals  do not satisfy the $L^2(\Omega)$-regularity for $\frac{3}{2} < \alpha < 2$. To address this issue, we require a weight function to measure the distance from the mesh skeleton. For a mesh $\mathcal{T}_h$, we introduce the weight function defined in \cite{Fau}
$$\omega_{\mathcal{T}_h}(x):=\inf\limits_{K\in\mathcal{T}_h}\inf\limits_{y\in\partial K}|x-y|.$$
We further define the corresponding weighted residual errors as follows
 \begin{eqnarray}
 E^2_y(y_{\mathcal{T}_h},K):=\|\widetilde{h}^{\frac{\alpha}{2}}_K(f+u_{\mathcal{T}_h}-(-\Delta )^{\frac{\alpha}{2}}y_{\mathcal{T}_h})\|^2_{L^2(K)},\label{Eydefin}\\
 E^2_p(p_{\mathcal{T}_h},K):=\|\widetilde{h}^{\frac{\alpha}{2}}_K(y_{\mathcal{T}_h}-y_d-(-\Delta )^{\frac{\alpha}{2}}p_{\mathcal{T}_h})\|^2_{L^2(K)},\label{Epdefin}
  \end{eqnarray}
where
   \begin{eqnarray*}\label{AD}\widetilde{h}^{\frac{\alpha}{2}}_K=\left\{\begin{aligned}
& h^{\frac{\alpha}{2}}_K,\ \ \ \ \ \ \ \ \ \ \ \ &\alpha\in(0,1],\\
&h^{{\frac{\alpha}{2}}-\sigma}_K\omega_{\mathcal{T}_h}^{\sigma},\ &\alpha\in(1,2),\ \sigma:=\frac{\alpha}{2}-{\frac{1}{2}}.
\end{aligned}\right.\end{eqnarray*}
Then on a subset $\omega\subset \Omega$, we define the error estimators of the state and adjoint state  by
 \begin{eqnarray*}
 E^2_y(y_{\mathcal{T}_h},\omega):=\sum\limits_{K\in \mathcal{T}_h, K\subset \omega}E^2_y(y_{\mathcal{T}_h},K),\ \ \ \ E^2_p({\mathcal{T}_h},\omega):=\sum\limits_{K\in \mathcal{T}_h, K\subset \omega}E^2_p(p_{\mathcal{T}_h},K).
  \end{eqnarray*}
Thus, $E_y(y_{\mathcal{T}_h},\mathcal{T}_h)$ and $E_p(p_{\mathcal{T}_h},\mathcal{T}_h)$ constitute the error estimators for the state equation
and the adjoint state equation on $\Omega$ with respect to $\mathcal{T}_h$ as follows
 \begin{eqnarray*}
 E^2_y(y_{\mathcal{T}_h},\mathcal{T}_h):=\sum\limits_{K\in \mathcal{T}_h}E^2_y(y_{\mathcal{T}_h},K),\ \ \ \ E^2_p({\mathcal{T}_h},\mathcal{T}_h):=\sum\limits_{K\in \mathcal{T}_h}E^2_p(p_{\mathcal{T}_h},K).
  \end{eqnarray*}
Moreover we also need the  Scott-Zhang operator(\cite{Fau}) $\Pi_{\mathcal{T}_h}:L^2(\Omega)\rightarrow \mathbb{V}_{\mathcal{T}_h}$ that satisfy the following properties
  \begin{align}
&(1): \Pi_{\mathcal{T}_h}v=v,\ \forall v\in  \mathbb{V}_{\mathcal{T}_h}.\label{sz1}\\
&(2):\|\Pi_{\mathcal{T}_h}v\|_{\widetilde{H}^{\frac{\alpha}{2}}(\Omega)}\leq \mathbb{C}_{sz} \|v\|_{\widetilde{H}^{\frac{\alpha}{2}}(\Omega)}, \forall v\in \widetilde{H}^{\frac{\alpha}{2}}(\Omega). \label{sz2}\\
&(3):\|\widetilde{h}^{-\frac{\alpha}{2}}_{\mathcal{T}_h}(1-\Pi_{\mathcal{T}_h})v\|\leq \mathbb{C}_{sz} \|v\|_{\widetilde{H}^{\frac{\alpha}{2}}(\Omega)}, \forall v\in \widetilde{H}^{\frac{\alpha}{2}}(\Omega). \label{sz3}
   \end{align}

 \begin{lemma}\label{upper-lower} For $0 < \alpha< 2$, $f+u_{\mathcal{T}_h}\in L^2(\Omega)$ and $y_{\mathcal{T}_h}-y_d\in L^2(\Omega)$ the weighted residual error estimator is reliable:
  \begin{eqnarray*}
\|\tilde{y}-y_{\mathcal{T}_h}\|_{\widetilde{H}^{\frac{\alpha}{2}}(\Omega)}\leq  \mathbb{C}_{yrel} E_y(y_{\mathcal{T}_h},\mathcal{T}_h),\ \|\tilde{p}-p_{\mathcal{T}_h}\|_{\widetilde{H}^{\frac{\alpha}{2}}(\Omega)}\leq \mathbb{C}_{prel} E_p(p_{\mathcal{T}_h},\mathcal{T}_h).
 \end{eqnarray*}
Moreover,  for $0<\alpha\leq 1$ and $\tilde{y},\ \tilde{p}\in H^{\frac{\alpha}{2}+\frac{1}{2}-\epsilon}(\Omega)\cap\widetilde{H}^{\frac{\alpha}{2}}(\Omega), 0\leq \epsilon< \min\{\frac{\alpha}{2}, \frac{1}{2}-\frac{\alpha}{2}\}$, the estimator is also efficient
  \begin{eqnarray*}
   E^2_y(y_{\mathcal{T}_h},\mathcal{T}_h)&\leq&  \mathbb{C}_{yeff} \Big(\|\tilde{y}-y_{\mathcal{T}_h}\|_{\widetilde{H}^{\frac{\alpha}{2}}(\Omega)}^2+\sum\limits_{K\in \mathcal{T}_h}h_K^{1-2\epsilon}\|\tilde{y}-y_{\mathcal{T}_h}\|_{H^{\frac{\alpha}{2}+\frac{1}{2}-\epsilon}(\Omega^3_h(K))}^2\Big),\\
    E^2_p(p_{\mathcal{T}_h},\mathcal{T}_h)&\leq& \mathbb{C}_{peff}   \Big(\|\tilde{p}-p_{\mathcal{T}_h}\|_{\widetilde{H}^{\frac{\alpha}{2}}(\Omega)}^2+\sum\limits_{K\in \mathcal{T}_h}h_K^{1-2\epsilon}\|\tilde{p}-p_{\mathcal{T}_h}\|_{H^{\frac{\alpha}{2}+\frac{1}{2}-\epsilon}(\Omega^3_h(K))}^2\Big).
 \end{eqnarray*}
 \end{lemma}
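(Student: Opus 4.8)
The plan is to establish reliability and efficiency separately, treating state and adjoint symmetrically since the auxiliary problems (\ref{yhu})--(\ref{phy}) have identical structure with right-hand sides $f+u_{\mathcal{T}_h}$ and $y_{\mathcal{T}_h}-y_d$, both in $L^2(\Omega)$ by hypothesis. For reliability, I would start from the error equation: since $\tilde y - y_{\mathcal{T}_h}\in\widetilde{H}^{\alpha/2}(\Omega)$, coercivity gives $\|\tilde y - y_{\mathcal{T}_h}\|_{\widetilde{H}^{\alpha/2}(\Omega)}^2 \le a(\tilde y - y_{\mathcal{T}_h}, \tilde y - y_{\mathcal{T}_h})$, and using Galerkin orthogonality I can subtract the Scott--Zhang interpolant: with $v:=\tilde y - y_{\mathcal{T}_h}$ and $v_{\mathcal{T}_h}:=\Pi_{\mathcal{T}_h}v$, one has $a(\tilde y - y_{\mathcal{T}_h}, v) = a(\tilde y - y_{\mathcal{T}_h}, v - v_{\mathcal{T}_h}) = (f+u_{\mathcal{T}_h}, v-v_{\mathcal{T}_h}) - a(y_{\mathcal{T}_h}, v - v_{\mathcal{T}_h})$. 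The key point is that, because $y_{\mathcal{T}_h}\in\mathbb{V}_{\mathcal{T}_h}\subset\widetilde{H}^{\alpha/2}(\Omega)$ with $\alpha/2 < 1$, and using the mapping property $(-\Delta)^{\alpha/2}:H^s\to H^{s-\alpha}$ from Section 2, the term $a(y_{\mathcal{T}_h}, w)$ can be rewritten as $((-\Delta)^{\alpha/2}y_{\mathcal{T}_h}, w)$ elementwise (this requires care when $(-\Delta)^{\alpha/2}y_{\mathcal{T}_h}\notin L^2$, which is exactly where the weight $\widetilde h_K^{\alpha/2}$ enters). Then I would localize over elements, insert and cancel $\widetilde h_K^{\pm\alpha/2}$, apply Cauchy--Schwarz elementwise, and use the weighted Scott--Zhang bound (\ref{sz3}), $\|\widetilde h_{\mathcal{T}_h}^{-\alpha/2}(1-\Pi_{\mathcal{T}_h})v\|\le \mathbb{C}_{sz}\|v\|_{\widetilde{H}^{\alpha/2}(\Omega)}$, together with finite overlap of patches, to absorb the resulting $\|v\|_{\widetilde{H}^{\alpha/2}(\Omega)}$ factor into the left side. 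This yields the reliability constants $\mathbb{C}_{yrel}$, $\mathbb{C}_{prel}$.

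For efficiency (only claimed for $0<\alpha\le 1$, where $\widetilde h_K^{\alpha/2}=h_K^{\alpha/2}$ and the residual is genuinely in $L^2$), the plan is the standard bubble-function argument adapted to the fractional/nonlocal setting. On each $K$, let $r_K := f+u_{\mathcal{T}_h}-(-\Delta)^{\alpha/2}y_{\mathcal{T}_h}$; multiply by an interior bubble $b_K$ supported on $K$ (or on a small patch, which forces the appearance of $\Omega_h^3(K)$ in the final bound), use $a(\tilde y - y_{\mathcal{T}_h}, b_K r_K) = (r_K, b_K r_K)$ from the error equation, bound the left side by $\|\tilde y - y_{\mathcal{T}_h}\|_{\widetilde{H}^{\alpha/2}(\omega)}$ times a suitable dual norm of $b_K r_K$, and invoke inverse/norm-equivalence estimates for the bubble. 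The nonlocality means the bilinear form does not localize cleanly, so the bound on $a(\tilde y - y_{\mathcal{T}_h}, b_K r_K)$ must control $(-\Delta)^{\alpha/2}$ applied to the globally-supported test function; here one uses that $b_K r_K \in H^{\alpha/2}$ with $\|b_K r_K\|_{H^{\alpha/2}}\lesssim h_K^{-\alpha/2}\|r_K\|_{L^2(K)}$ (scaling of the bubble) — and a finer estimate yielding the extra term $h_K^{1-2\epsilon}\|\tilde y-y_{\mathcal{T}_h}\|_{H^{\alpha/2+1/2-\epsilon}(\Omega_h^3(K))}^2$ comes from the additional regularity hypothesis on $\tilde y$ via a localized trace/interpolation estimate for the nonlocal operator. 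Summing over $K$ and using bounded overlap of the patches $\Omega_h^3(K)$ gives $\mathbb{C}_{yeff}$, $\mathbb{C}_{peff}$.

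The main obstacle is the nonlocality of $(-\Delta)^{\alpha/2}$: unlike the classical Laplacian, there are no element-boundary jump terms, but in exchange the localization in both the upper and lower bounds is delicate — in reliability because $(-\Delta)^{\alpha/2}y_{\mathcal{T}_h}$ need not be in $L^2$ for $\alpha>3/2$ (handled precisely by the distance-to-skeleton weight $\omega_{\mathcal{T}_h}^\sigma$ and the weighted Scott--Zhang estimate), and in efficiency because the bubble-function test function is globally supported against a nonlocal form, so one cannot simply restrict integration to $K$. I expect the technical heart of the argument to be the weighted commutator/mapping estimate controlling $\|\widetilde h_K^{\alpha/2}(-\Delta)^{\alpha/2}y_{\mathcal{T}_h}\|_{L^2(K)}$ and the matching dual bound; these are exactly the estimates isolated in \cite{Fau}, which I would cite rather than reprove, restricting my work to verifying that the right-hand-side data $f+u_{\mathcal{T}_h}$ and $y_{\mathcal{T}_h}-y_d$ satisfy the requisite hypotheses and that the regularity of $\tilde y,\tilde p$ from Lemma \ref{state_regularity2} (with $\sigma=\min\{\alpha/2,1/2\}$) matches the range $0\le\epsilon<\min\{\alpha/2,1/2-\alpha/2\}$ needed for the efficiency estimate.
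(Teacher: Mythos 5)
Your proposal is correct and follows essentially the same route as the paper: the paper's proof simply observes that $y_{\mathcal{T}_h}$ and $p_{\mathcal{T}_h}$ are the Galerkin approximations of the auxiliary solutions $\tilde{y}$ and $\tilde{p}$, whose data $f+u_{\mathcal{T}_h}$ and $y_{\mathcal{T}_h}-y_d$ lie in $L^2(\Omega)$, and then invokes the reliability and efficiency results of \cite{Fau}, which is exactly the reduction you end with after sketching the weighted Scott--Zhang and bubble-function machinery. Your only superfluous step is trying to match the efficiency hypothesis $\tilde{y},\tilde{p}\in H^{\frac{\alpha}{2}+\frac{1}{2}-\epsilon}(\Omega)$ against Lemma \ref{state_regularity2}: that regularity is simply assumed in the statement of the lemma, so no verification is needed there.
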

\begin{proof}
 Note that $y_{\mathcal{T}_h}$ and $p_{\mathcal{T}_h}$ are finite element approximations of $\tilde{y}$ and $\tilde{p}$. We refer the reader to \cite{Fau} for details on the proof of the upper and lower bounds in the Lemma.
 \end{proof}

 We define
   \begin{align}
%&a(\hat{y},v)=(f+\hat{u},v), \ \ \ \ \ \ \ \forall v\in \widetilde{H}^{s}(\Omega),\label{y2}\\
%&a(w,\hat{p})=(\hat{y}-y_{d},w), \ \ \ \ \ \ \ \forall w\in \widetilde{H}^{s}(\Omega),\label{p2}\\
&\hat{u}=\Pi_{[a,b]}\left(-\frac{1}{\alpha}(p_{\mathcal{T}_h}+\beta\hat{\lambda})\right),\label{fuzhu2}\\
 &\hat{\lambda}=\Pi_{[-1,1]}\left(-\frac{1}{\beta}p_{\mathcal{T}_h}\right)\label{hatlamda}.
   \end{align}
Here $\hat{\lambda}\in \partial j_2(\hat{u})$. $\hat{u}$ can be described similarly by
 \begin{eqnarray}\label{inequ}
 (p_{\mathcal{T}_h}+\gamma \hat{u}+\beta\hat{\lambda}, v-\hat{u})\geq0,\ \forall v\in U_{ad}.
  \end{eqnarray}
  Due to the variational
approach is considered, we have that $\hat{u}=u_{\mathcal{T}_h},\ \hat{\lambda}=\lambda_{\mathcal{T}_h}.$ Thus a posteriori error indicators and estimators for the optimal control variable and the associated subgradient are zero, i.e.,
 \begin{eqnarray}\label{EuElamda}
 E_{u}^2(u_{\mathcal{T}_h},K):=\|\hat{u}-u_{\mathcal{T}_h}\|^2=0,\ \ \ E_{\lambda}^2(\lambda_{\mathcal{T}_h},K):=\|\hat{\lambda}-\lambda_{\mathcal{T}_h}\|^2=0.
 \end{eqnarray}
 In the subsequent analysis, let $C$ represent a generic constant with distinct values in different instances.
 We define the errors $e_y=y-y_{\mathcal{T}_h},\ e_p=p-p_{\mathcal{T}_h}, e_u=u-u_{\mathcal{T}_h},\ e_\lambda=\lambda-\lambda_{\mathcal{T}_h},$ the vector $\mathbf{e}=(e_y,e_p,e_u,e_\lambda)^T$, and the norm
\begin{eqnarray}\label{e}
\|\mathbf{e}\|_{\Omega}^{2}=\|e_y\|_{\widetilde{H}^{\frac{\alpha}{2}}(\Omega)}^{2}+\|e_p\|_{\widetilde{H}^{\frac{\alpha}{2}}(\Omega)}^{2}+\|e_u\|+\|e_\lambda\|.
\end{eqnarray}
\subsection{Reliability of the error estimator $\mathcal{E}_{ocp}$}
\begin{theorem}\label{Reliability}
 Let $(y,p,u,\lambda)\in \widetilde{H}^{\frac{\alpha}{2}}(\Omega)\times \widetilde{H}^{\frac{\alpha}{2}}(\Omega)\times U_{ad}\times U_{ad}$ and $(y_{\mathcal{T}_h},p_{\mathcal{T}_h},u_{\mathcal{T}_h},\lambda_{\mathcal{T}_h})\in \mathbb{V}_{\mathcal{T}_h}\times \mathbb{V}_{\mathcal{T}_h}\times U_{ad}\times U_{ad}$ be the solutions of problems (\ref{zuiyou}) and (\ref{lisan}), respectively. Then the following upper bound of a posteriori error holds for $h<h_0\ll1$
   \begin{eqnarray*}
\|\mathbf{e}\|_{\Omega}^{2}\leq \mathcal{E}_{ocp}^2(y_{\mathcal{T}_h},p_{\mathcal{T}_h},\mathcal{T}_h).
 \end{eqnarray*}
 Here
 \begin{eqnarray}\label{E}
 \mathcal{E}_{ocp}^2(y_{\mathcal{T}_h},p_{\mathcal{T}_h},\mathcal{T}_h)=\sum\limits_{K\in \mathcal{T}_h, }\mathcal{E}_{K}^2(y_{\mathcal{T}_h},p_{\mathcal{T}_h},K),\ \mathcal{E}_{K}^2(y_{\mathcal{T}_h},p_{\mathcal{T}_h},K)=C_{st} E^2_y(y_{\mathcal{T}_h},K)+C_{ad} E^2_p(p_{\mathcal{T}_h},K).
  \end{eqnarray}
%Both the continuous and discrete optimal variables have no effect on the constants $C_{st}$ and $C_{ad}$.
\end{theorem}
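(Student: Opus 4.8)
The plan is to control the four components of $\|\mathbf e\|_\Omega^2$ by inserting the auxiliary functions $\tilde y,\tilde p$ defined in \eqref{yhu} and splitting
\[
\|e_y\|_{\widetilde H^{\frac\alpha2}(\Omega)}\le \|y-\tilde y\|_{\widetilde H^{\frac\alpha2}(\Omega)}+\|\tilde y-y_{\mathcal T_h}\|_{\widetilde H^{\frac\alpha2}(\Omega)},
\]
and similarly for $e_p$. The second term in each case is exactly the quantity estimated by Lemma~\ref{upper-lower}, so it is bounded by $\mathbb C_{yrel}E_y(y_{\mathcal T_h},\mathcal T_h)$ (resp. $\mathbb C_{prel}E_p(p_{\mathcal T_h},\mathcal T_h)$). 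The remaining job is to bound the ``consistency'' terms $\|y-\tilde y\|_{\widetilde H^{\frac\alpha2}(\Omega)}$, $\|p-\tilde p\|_{\widetilde H^{\frac\alpha2}(\Omega)}$ and the control/multiplier errors $\|e_u\|,\|e_\lambda\|$ in terms of each other and of $E_y,E_p$, then absorb the cross terms.

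First I would note that $y-\tilde y=\mathcal S(u-u_{\mathcal T_h})$, so by coercivity and boundedness of $a(\cdot,\cdot)$, $\|y-\tilde y\|_{\widetilde H^{\frac\alpha2}(\Omega)}\le C\|e_u\|$, and by the duality/regularity argument as in Lemma~\ref{etah}, $\|y-\tilde y\|\le C\Theta(h)\|y-\tilde y\|_{\widetilde H^{\frac\alpha2}(\Omega)}\le C\Theta(h)\|e_u\|$. Next, $p-\tilde p$ solves $a(w,p-\tilde p)=(y-y_{\mathcal T_h},w)=(y-\tilde y,w)+(\tilde y-y_{\mathcal T_h},w)$, so
\[
\|p-\tilde p\|_{\widetilde H^{\frac\alpha2}(\Omega)}\le C\big(\|y-\tilde y\|+\|\tilde y-y_{\mathcal T_h}\|\big)\le C\big(\Theta(h)\|e_u\|+\Theta(h)E_y(y_{\mathcal T_h},\mathcal T_h)\big),
\]
using the $L^2$ estimate of Lemma~\ref{upper-lower} together with Lemma~\ref{etah} for $\|\tilde y-y_{\mathcal T_h}\|$. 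The crucial step is to estimate $\|e_u\|$: I would test the continuous variational inequality \eqref{variational inequality} with $v=u_{\mathcal T_h}$ and the discrete one \eqref{inequ} (equivalently the third line of \eqref{lisan}, valid for all $v\in U_{ad}$ since the control set is not discretized) with $v=u$, add them, and use monotonicity of $\partial j_2$ in the form \eqref{subg} to get
\[
\gamma\|e_u\|^2\le (p_{\mathcal T_h}-p,e_u)+\beta(\lambda_{\mathcal T_h}-\lambda,e_u)\le (p_{\mathcal T_h}-p,e_u).
\]
Then I would write $p_{\mathcal T_h}-p=(p_{\mathcal T_h}-\tilde p)+(\tilde p-p)$ and also insert $\tilde y-y$ to relate things back to $E_y$: by the standard adjoint-trick, $(\tilde p-p,e_u)=a(y-\tilde y,\tilde p-p)=\dots$, leading to $(p_{\mathcal T_h}-p,e_u)\le C\|\tilde p-p_{\mathcal T_h}\|\,\|e_u\|+C\|y-\tilde y\|\cdot(\text{something})$; combined with the bounds above and Young's inequality this yields $\gamma\|e_u\|^2\le C E_p^2+C\Theta(h)^2\|e_u\|^2$, and for $h<h_0$ the last term is absorbed, giving $\|e_u\|\le C E_p(p_{\mathcal T_h},\mathcal T_h)$.

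With $\|e_u\|$ controlled, everything cascades: $\|e_y\|_{\widetilde H^{\frac\alpha2}(\Omega)}\le C(E_y+\|e_u\|)\le C(E_y+E_p)$, then $\|e_p\|_{\widetilde H^{\frac\alpha2}(\Omega)}\le C(E_p+E_y)$, and finally $\|e_\lambda\|$ is handled via the projection formulas \eqref{lambda} and \eqref{lamdath}: since $\lambda=\Pi_{[-1,1]}(-p/\beta)$ and $\lambda_{\mathcal T_h}=\Pi_{[-1,1]}(-p_{\mathcal T_h}/\beta)$ and $\Pi_{[-1,1]}$ is Lipschitz with constant $1$, $\|e_\lambda\|\le \tfrac1\beta\|e_p\|\le \tfrac1\beta\|e_p\|_{\widetilde H^{\frac\alpha2}(\Omega)}\le C(E_y+E_p)$. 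Summing the four contributions and relabeling constants produces $\|\mathbf e\|_\Omega^2\le C_{st}E_y^2+C_{ad}E_p^2=\mathcal E_{ocp}^2$. The main obstacle I anticipate is the bookkeeping in the $\|e_u\|$ estimate: making sure that every cross term generated by inserting $\tilde y,\tilde p$ is either of the form $\Theta(h)^2\|e_u\|^2$ (absorbable for small $h$) or is genuinely controlled by $E_y^2+E_p^2$, rather than by the unknown $H^{\frac\alpha2}$-errors, and in particular exploiting the $L^2$-superconvergence estimate $\|\tilde y-y_{\mathcal T_h}\|\le C\Theta(h)\|\tilde y-y_{\mathcal T_h}\|_{\widetilde H^{\frac\alpha2}(\Omega)}$ from Lemma~\ref{etah} rather than a crude bound — this is exactly where the smallness hypothesis $h<h_0\ll 1$ enters.
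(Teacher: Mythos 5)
Your overall architecture is sound and is in fact a genuinely different route from the paper: the paper's Steps 3--4 introduce the \emph{discrete} auxiliary pair $y_{\mathcal T}=\mathcal S_h(f+u)$, $p_{\mathcal T}$, exploit $(u-u_{\mathcal T_h},p_{\mathcal T}-p_{\mathcal T_h})=\|y_{\mathcal T}-y_{\mathcal T_h}\|^2\ge 0$, and then run an $L^2$-duality argument for $p-p_{\mathcal T}$ that needs $\Theta(h)\ll1$ to absorb terms, whereas you work only with the continuous auxiliaries $\tilde y,\tilde p$ of \eqref{yhu} and a symmetric adjoint trick. However, there is a concrete error in your key step. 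You invoke ``the duality/regularity argument as in Lemma~\ref{etah}'' to claim $\|y-\tilde y\|\le C\Theta(h)\|y-\tilde y\|_{\widetilde H^{\frac\alpha2}(\Omega)}$. Lemma~\ref{etah} is an Aubin--Nitsche estimate for the Galerkin error $\mathcal Sf-\mathcal S_hf$ and its proof hinges on Galerkin orthogonality; $y-\tilde y=\mathcal S(u-u_{\mathcal T_h})$ is the difference of two \emph{continuous} solutions, so no $\Theta(h)$ gain is available and only $\|y-\tilde y\|\le C\|y-\tilde y\|_{\widetilde H^{\frac\alpha2}(\Omega)}\le C\|e_u\|$ holds. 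Consequently the announced chain $\gamma\|e_u\|^2\le CE_p^2+C\Theta(h)^2\|e_u\|^2$ and the conclusion $\|e_u\|\le CE_p(p_{\mathcal T_h},\mathcal T_h)$ are not justified as written; indeed your splitting necessarily produces an $E_y$ contribution through the cross term, so a bound by $E_p$ alone should not be expected.

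The good news is that the flaw is repairable inside your own strategy, and the repaired argument is cleaner than the paper's. Carrying out your adjoint trick exactly: from $a(y-\tilde y,v)=(e_u,v)$ and $a(w,\tilde p-p)=(y_{\mathcal T_h}-y,w)$ one gets
\begin{align*}
(\tilde p-p,e_u)&=a(y-\tilde y,\tilde p-p)=-(y-y_{\mathcal T_h},y-\tilde y)
=-\|y-\tilde y\|^2-(\tilde y-y_{\mathcal T_h},y-\tilde y)\le\tfrac14\|\tilde y-y_{\mathcal T_h}\|^2,
\end{align*}
so that $\gamma\|e_u\|^2\le(p_{\mathcal T_h}-p,e_u)\le\|\tilde p-p_{\mathcal T_h}\|\,\|e_u\|+\tfrac14\|\tilde y-y_{\mathcal T_h}\|^2$. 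Young's inequality, the bound $\|\cdot\|\le C\|\cdot\|_{\widetilde H^{\frac\alpha2}(\Omega)}$ and Lemma~\ref{upper-lower} then give $\|e_u\|^2\le C\bigl(E_y^2(y_{\mathcal T_h},\mathcal T_h)+E_p^2(p_{\mathcal T_h},\mathcal T_h)\bigr)$ with no absorption argument and no smallness of $\Theta(h)$ needed at this stage (unlike the paper, where $h<h_0$ is essential in Step 4). With this corrected control-error bound your cascade for $e_y$, $e_p$ and, via the projection formulas \eqref{lambda} and \eqref{lamdath}, for $e_\lambda$ goes through verbatim and yields the theorem; just drop the spurious $\Theta(h)$ factors attached to $\|y-\tilde y\|$ and to $\|p-\tilde p\|$, which are not needed anywhere in the reliability proof.
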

\begin{proof}
We proceed in five steps.

$\underline{Step\ 1.}$
By applying the triangle inequality and Lemma \ref{upper-lower}, we can readily obtain
 \begin{eqnarray}\label{y-yth}
\|y-y_{\mathcal{T}_h}\|^2_{\widetilde{H}^{\frac{\alpha}{2}}(\Omega)}\leq 2\|y-\tilde{y}\|^2_{\widetilde{H}^{\frac{\alpha}{2}}(\Omega)}+2\|\tilde{y}-y_{\mathcal{T}_h}\|^2_{\widetilde{H}^{\frac{\alpha}{2}}(\Omega)}.
\end{eqnarray}
Moreover, by the coercivity of the bilinear form $a(\cdot,\cdot)$, we can derive
\begin{eqnarray*}
\|y-\tilde{y}\|_{\widetilde{H}^{\frac{\alpha}{2}}(\Omega)}\leq \|u-u_{\mathcal{T}_h}\|.
\end{eqnarray*}
This estimate combined with (\ref{y-yth}) imply that
 \begin{eqnarray}\label{y-yh}
\|y-y_{\mathcal{T}_h}\|^2_{\widetilde{H}^{\frac{\alpha}{2}}(\Omega)}
&\leq 2\|u-u_{\mathcal{T}_h}\|^2+2\|\tilde{y}-y_{\mathcal{T}_h}\|^2_{\widetilde{H}^{\frac{\alpha}{2}}(\Omega)}.
\end{eqnarray}

$\underline{Step\ 2.}$
In a similar way, we can obtain that
  \begin{align*}
\|p-p_{\mathcal{T}_h}\|^2_{\widetilde{H}^{\frac{\alpha}{2}}(\Omega)}&\leq 2\|p-\tilde{p}\|^2_{\widetilde{H}^{\frac{\alpha}{2}}(\Omega)}+2\|\tilde{p}-p_{\mathcal{T}_h}\|^2_{\widetilde{H}^{\frac{\alpha}{2}}(\Omega)}\nonumber\\
&\leq 2\|y-y_{\mathcal{T}_h}\|^2+2\|\tilde{p}-p_{\mathcal{T}_h}\|^2_{\widetilde{H}^{\frac{\alpha}{2}}(\Omega)}\nonumber\\
&\leq 2\|y-y_{\mathcal{T}_h}\|_{\widetilde{H}^{\frac{\alpha}{2}}(\Omega)}^2+2\|\tilde{p}-p_{\mathcal{T}_h}\|^2_{\widetilde{H}^{\frac{\alpha}{2}}(\Omega)}.
\end{align*}
%To complete the estimate of $\|p-p_{\mathcal{T}_h}\|_{\widetilde{H}^{\frac{\alpha}{2}}(\Omega)}$, we need to compute $\|y-y_{\mathcal{T}_h}\|$ using the dual argument in the following analysis.
%   Let $\psi$ be the solution of the following problem
%\begin{eqnarray}\nonumber\left\{ \begin{aligned}
% (-\Delta)^s \psi&=y-y_{\mathcal{T}_h},& \mbox{in}\ \Omega,\\
%   \psi&=0, &\mbox{in}\ \Omega^c.
%\end{aligned}\right.
%   \end{eqnarray}
% In an analogous way we obtain
%  \begin{align*}
%  \| y-y_{\mathcal{T}_h}\|^2&=((-\Delta)^{\frac{\alpha}{2}} \psi,y-y_{\mathcal{T}_h})\\
%     &=a(\psi, y-y_{\mathcal{T}_h})\\
%     &=a(\psi-\psi_{\mathcal{T}_h}, y-y_{\mathcal{T}_h})+a(\psi_{\mathcal{T}_h}, y-y_{\mathcal{T}_h})\\
%     &=a(\psi-\psi_{\mathcal{T}_h}, y-y_{\mathcal{T}_h})+ (\psi_{\mathcal{T}_h}-\psi, u-u_{\mathcal{T}_h})+ (\psi, u-u_{\mathcal{T}_h})\\
%         &\leq\|\psi-\psi_{\mathcal{T}_h}\|_{\widetilde{H}^{\frac{\alpha}{2}}(\Omega)}\| y-y_{\mathcal{T}_h}\|_{\widetilde{H}^{\frac{\alpha}{2}}(\Omega)} + \|\psi_{\mathcal{T}_h}-\psi\|\ \| u-u_{\mathcal{T}_h}\|+ \|\psi\|\ \|u-u_{\mathcal{T}_h}\|\\
%          &\leq C\Theta(h)\| y-y_{\mathcal{T}_h}\|\ \| y-y_{\mathcal{T}_h}\|_{\widetilde{H}^{\frac{\alpha}{2}}(\Omega)} +C\Theta^2(h)\| y-y_{\mathcal{T}_h}\|\ \| u-u_{\mathcal{T}_h}\|+ C \| y-y_{\mathcal{T}_h}\|\ \|u-u_{\mathcal{T}_h}\|.
%     \end{align*}
% Thus
% \begin{align}\label{y-yTh}
%\|y-y_{\mathcal{T}_h}\|^2&\leq C\Theta^2(h) \| y-y_{\mathcal{T}_h}\|^2_{\widetilde{H}^{\frac{\alpha}{2}}(\Omega)} +C \|u-u_{\mathcal{T}_h}\|^2.
%     \end{align}
  Therefore, (\ref{y-yh})
and the previous estimate allow us to deduce the a posteriori error estimate
\begin{eqnarray}\label{p-pTh}
\|p-p_{\mathcal{T}_h}\|^2_{\widetilde{H}^{\frac{\alpha}{2}}(\Omega)}
&\leq 4\|u-u_{\mathcal{T}_h}\|^2+4\|\tilde{y}-y_{\mathcal{T}_h}\|^2_{\widetilde{H}^{\frac{\alpha}{2}}(\Omega)}+ 2 \| \tilde{p}-p_{\mathcal{T}_h}\|^2_{\widetilde{H}^{\frac{\alpha}{2}}(\Omega)} .
     \end{eqnarray}

$\underline{Step\ 3.}$
The goal of this step is to estimate the error $\|u-u_{\mathcal{T}_h}\|$.
% \begin{eqnarray}\label{fuzhu3}
%a(y_{\mathcal{T}},w_{\mathcal{T}_h})&=(f+u,w)\ \forall w_{\mathcal{T}_h}\in \mathbb{V}_{\mathcal{T}_h},\\
%a(v_{\mathcal{T}_h},p_{\mathcal{T}})&=(y_{\mathcal{T}}-y_d,v_{\mathcal{T}_h})\ \forall v_{\mathcal{T}_h}\in \mathbb{V}_{\mathcal{T}_h}.
%\end{eqnarray}
%, and write $u-u_{\mathcal{T}_h}=u-u_{\mathcal{T}}+u_{\mathcal{T}}-u_{\mathcal{T}_h}$. Applying (\ref{Eu}), we can obtain
Setting $v =u$ in (\ref{lisan}) and $v =u_{\mathcal{T}_h}$ in (\ref{variational inequality}) we arrive at
 \begin{eqnarray*}
\gamma \|u-u_{\mathcal{T}_h}\|^2\leq(p-p_{\mathcal{T}_h},u_{\mathcal{T}_h}-u)+\beta(\lambda-\lambda_{\mathcal{T}_h},u_{\mathcal{T}_h}-u).
\end{eqnarray*}
Since $\lambda\in j_2(u)$ and $\lambda_{\mathcal{T}_h}\in j_2(u_{\mathcal{T}_h})$, in view of (\ref{subg}), implies that
 \begin{eqnarray*}
\beta(\lambda-\lambda_{\mathcal{T}_h},u_{\mathcal{T}_h}-u)\leq0.
\end{eqnarray*}
Thus we have that
 \begin{eqnarray}\label{u-u}
\gamma \|u-u_{\mathcal{T}_h}\|^2\leq(p-p_{\mathcal{T}_h},u_{\mathcal{T}_h}-u).
\end{eqnarray}
To control the right hand side of (\ref{u-u}), we now invoke the auxiliary states $p_{\mathcal{T}}$ that satisfy the following equation
\begin{align*}
a(y_{\mathcal{T}},v_{\mathcal{T}_h})&=(f+u,v_{\mathcal{T}_h}), \ \ \ \forall v_{\mathcal{T}_h}\in \mathbb{V}_{\mathcal{T}_h}, \\
a(w_{\mathcal{T}_h},p_{\mathcal{T}})&=(y_{\mathcal{T}}-y_{d},w_{\mathcal{T}_h}), \ \ \ \forall w_{\mathcal{T}_h}\in \mathbb{V}_{\mathcal{T}_h}.
\end{align*}
Then (\ref{u-u}) can be rewritten as
\begin{eqnarray}\label{u-uH}
\gamma \|u- u_{\mathcal{T}_h}\|^2\leq(p-p_{\mathcal{T}},u_{\mathcal{T}_h}-u)+(p_{\mathcal{T}}-p_{\mathcal{T}_h},u_{\mathcal{T}_h}-u).
\end{eqnarray}
Note that
\begin{align}\label{fz}
a(y_{\mathcal{T}}-y_{\mathcal{T}_h},v_{\mathcal{T}_h})&=(u-u_{\mathcal{T}_h},v_{\mathcal{T}_h}), \ \ \ \forall v_{\mathcal{T}_h}\in \mathbb{V}_{\mathcal{T}_h}, \\
a(w_{\mathcal{T}_h},p_{\mathcal{T}}-p_{\mathcal{T}_h})&=(y_{\mathcal{T}}-y_{\mathcal{T}_h},w_{\mathcal{T}_h}),\ \ \ \ \forall w_{\mathcal{T}_h}\in \mathbb{V}_{\mathcal{T}_h}.
\end{align}
Setting $v_{\mathcal{T}_h} =p_{\mathcal{T}}-p_{\mathcal{T}_h}$ and $w_{\mathcal{T}_h}= y_{\mathcal{T}}-y_{\mathcal{T}_h}$ in (\ref{fz}) yields
$$(u-u_{\mathcal{T}_h},p_{\mathcal{T}}-p_{\mathcal{T}_h})=a(y_{\mathcal{T}}-y_{\mathcal{T}_h},p_{\mathcal{T}}-p_{\mathcal{T}_h})
=(y_{\mathcal{T}}-y_{\mathcal{T}_h},y_{\mathcal{T}}-y_{\mathcal{T}_h})\geq 0.$$
From (\ref{u-uH}) and the above equation we have
\begin{eqnarray}\label{u-uth}
\|u-u_{\mathcal{T}_h}\|^2\leq \frac{1}{\gamma^2}\|p-p_{\mathcal{T}}\|^2.
\end{eqnarray}

$\underline{Step\ 4.}$
We now go to control $\|p-p_{\mathcal{T}}\|.$  To accomplish this task, we introduce the following problem
\begin{eqnarray}\nonumber\left\{ \begin{aligned}
 (-\Delta)^s \phi&=p_{\mathcal{T}}-p,&  \mbox{in}\ \Omega,\\
   \phi&=0, &\mbox{in}\ \Omega^c.
\end{aligned}\right.
   \end{eqnarray}
   Let $\phi_{\mathcal{T}_h}$ be the finite element approximation of $\phi$. Invoking Lemma \ref{etah}, we have that
\begin{eqnarray}\label{phi-phih}
\|\phi-\phi_{\mathcal{T}_h}\|_{\widetilde{H}^{\frac{\alpha}{2}}(\Omega)}\leq C\Theta(h)\|p-p_{\mathcal{T}}\|\ \mbox{and}\ \|\phi-\phi_{\mathcal{T}_h}\|\leq C\Theta^2(h)\|p-p_{\mathcal{T}}\|.
\end{eqnarray}
Note that $y_{\mathcal{T}}$ is the finite element approximation of $y$, by Lemma \ref{etah}, we immediately arrive at the estimate
\begin{eqnarray}\label{yT-y}
\|y- y_{\mathcal{T}}\|\leq C\Theta(h)\|y-y_{\mathcal{T}}\|_{\widetilde{H}^{\frac{\alpha}{2}}(\Omega)}.
\end{eqnarray}
We bound $\|p-p_{\mathcal{T}}\|$ in view of the previous inequality that
  \begin{align*}
\|p-p_{\mathcal{T}}\|^2 & =((-\Delta)^{\frac{\alpha}{2}} \phi,p-p_{\mathcal{T}})=a(\phi, p-p_{\mathcal{T}})=a(\phi-\phi_{\mathcal{T}_h},p-p_{\mathcal{T}})+a(\phi_{\mathcal{T}_h}, p-p_{\mathcal{T}})\\
     &=a(\phi-\phi_{\mathcal{T}_h}, p-p_{\mathcal{T}})+ (\phi_{\mathcal{T}_h}-\phi,y-y_{\mathcal{T}})+ (\phi,y-y_{\mathcal{T}})\\
         &\leq\|\phi-\phi_{\mathcal{T}_h}\|_{\widetilde{H}^{\frac{\alpha}{2}}(\Omega)}\|p-p_{\mathcal{T}}\|_{\widetilde{H}^{\frac{\alpha}{2}}(\Omega)} + \|\phi_{\mathcal{T}_h}-\phi\|\ \|y-y_{\mathcal{T}}\|+ \|\phi\|\ \|y-y_{\mathcal{T}}\|.
   \end{align*}
    This result combined with (\ref{phi-phih}) and (\ref{yT-y}) allows us to derive that
     \begin{align}
\|p-p_{\mathcal{T}}\|^2&\leq C\Theta(h)\left(\|p-p_{\mathcal{T}}\|_{\widetilde{H}^{\frac{\alpha}{2}}(\Omega)}+\| y-y_{\mathcal{T}}\|_{\widetilde{H}^{\frac{\alpha}{2}}(\Omega)} \right)\|p-p_{\mathcal{T}}\|+C\Theta^3 (h) \|y-y_{\mathcal{T}}\|_{\widetilde{H}^{\frac{\alpha}{2}}(\Omega)}\|p-p_{\mathcal{T}}\|.\nonumber
 \end{align}
Consequently,
    \begin{eqnarray*}
 \|p-p_{\mathcal{T}}\|\leq C \Theta(h)\left(\|p-p_{\mathcal{T}}\|_{\widetilde{H}^{\frac{\alpha}{2}}(\Omega)}  +   \| y-y_{\mathcal{T}}\|_{\widetilde{H}^{\frac{\alpha}{2}}(\Omega)} \right). \nonumber
     \end{eqnarray*}
     Thus, by (\ref{u-uth}) we have
 \begin{equation*}
 \|u-u_{\mathcal{T}_h}\|\leq \frac{C}{\gamma} \Theta(h)\left(\|p-p_{\mathcal{T}}\|_{\widetilde{H}^{\frac{\alpha}{2}}(\Omega)}  +\| y-y_{\mathcal{T}}\|_{\widetilde{H}^{\frac{\alpha}{2}}(\Omega)} \right).
     \end{equation*}

      Further, invoking the discrete state variable $y_{\mathcal{T}_h}$ and the discrete adjoint variable $p_{\mathcal{T}_h}$ in the previous inequality, we derive that
       \begin{align*}
\|u-u_{\mathcal{T}_h}\|
&\leq \frac{C}{\gamma} \Theta(h)\left(\|p-p_{\mathcal{T}_h}\|_{\widetilde{H}^{\frac{\alpha}{2}}(\Omega)}  +  \|p_{\mathcal{T}_h}-p_{\mathcal{T}}\|_{\widetilde{H}^{\frac{\alpha}{2}}(\Omega)} + \| y-y_{\mathcal{T}_h}\|_{\widetilde{H}^{\frac{\alpha}{2}}(\Omega)}+ \| y_{\mathcal{T}_h}-y_{\mathcal{T}}\|_{\widetilde{H}^{\frac{\alpha}{2}}(\Omega)} \right).
     \end{align*}
 We notice that $a(v_{\mathcal{T}_h}, p_{\mathcal{T}_h}-p_{\mathcal{T}} )=(y_{\mathcal{T}_h}-y_{\mathcal{T}},v_{\mathcal{T}_h})$ and  $a( y_{\mathcal{T}_h}-y_{\mathcal{T}} ,w_{\mathcal{T}_h})=(u_{\mathcal{T}_h}-u,w_{\mathcal{T}_h})$, we can obtain that
       \begin{align*}
\|u-u_{\mathcal{T}_h}\|&\leq \frac{C}{\gamma} \Theta(h)\left(\|p-p_{\mathcal{T}_h}\|_{\widetilde{H}^{\frac{\alpha}{2}}(\Omega)}  +  \|p_{\mathcal{T}_h}-p_{\mathcal{T}}\|_{\widetilde{H}^{\frac{\alpha}{2}}(\Omega)}  +   \| y-y_{\mathcal{T}_h}\|_{\widetilde{H}^{\frac{\alpha}{2}}(\Omega)}+ \| y_{\mathcal{T}_h}-y_{\mathcal{T}}\|_{\widetilde{H}^{\frac{\alpha}{2}}(\Omega)} \right)\\
& \leq \frac{C}{\gamma}\Theta(h)\left(\|y_{\mathcal{T}_h}-y_{\mathcal{T}}\|_{\widetilde{H}^{\frac{\alpha}{2}}(\Omega)}  +  \|p-p_{\mathcal{T}_h}\|_{\widetilde{H}^{\frac{\alpha}{2}}(\Omega)}  +   \|u_{\mathcal{T}_h}-u\|+ \|y-y_{\mathcal{T}_h}\|_{\widetilde{H}^{\frac{\alpha}{2}}(\Omega)} \right)\\
&\leq \frac{C}{\gamma}\Theta(h)\left(\|u_{\mathcal{T}_h}-u\| +  \|p-p_{\mathcal{T}_h}\|_{\widetilde{H}^{\frac{\alpha}{2}}(\Omega)} + \| y-y_{\mathcal{T}_h}\|_{\widetilde{H}^{\frac{\alpha}{2}}(\Omega)}\right ).
     \end{align*}
For  $h_0\ll1 $ such that $\Theta(h)\ll 1, h<h_0$, we can obtain
  \begin{eqnarray}\label{u-uTh}
 \|u-u_{\mathcal{T}_h}\|\leq \frac{C}{\gamma}\Theta(h)\left( \|p-p_{\mathcal{T}_h}\|_{\widetilde{H}^{\frac{\alpha}{2}}(\Omega)} + \| y- y_{\mathcal{T}_h}\|_{\widetilde{H}^{\frac{\alpha}{2}}(\Omega)} \right).
     \end{eqnarray}
Replacing this estimate in (\ref{y-yh}) and (\ref{p-pTh}) we arrive at
 \begin{eqnarray}\label{opy-yTh}
\|y-y_{\mathcal{T}_h}\|^2_{\widetilde{H}^{\frac{\alpha}{2}}(\Omega)}
\leq \frac{C}{\gamma^2}\Theta^2(h)\left( \|p-p_{\mathcal{T}_h}\|_{\widetilde{H}^{\frac{\alpha}{2}}(\Omega)} + \| y- y_{\mathcal{T}_h}\|_{\widetilde{H}^{\frac{\alpha}{2}}(\Omega)} \right)+C\|\tilde{y}-y_{\mathcal{T}_h}\|^2_{\widetilde{H}^{\frac{\alpha}{2}}(\Omega)}
\end{eqnarray}
and
\begin{align}\label{opp-pTh}
\|p-p_{\mathcal{T}_h}\|^2_{\widetilde{H}^{\frac{\alpha}{2}}(\Omega)}
&\leq \frac{C}{\gamma^2}\Theta^2(h)\left( \|p_{\mathcal{T}_h}-p\|^2_{\widetilde{H}^{\frac{\alpha}{2}}(\Omega)} + \| y_{\mathcal{T}_h}-y\|^2_{\widetilde{H}^{\frac{\alpha}{2}}(\Omega)} \right)+C\|\tilde{y}-y_{\mathcal{T}_h}\|^2_{\widetilde{H}^{\frac{\alpha}{2}}(\Omega)}+  C \| \tilde{p}-p_{\mathcal{T}_h}\|^2_{\widetilde{H}^{\frac{\alpha}{2}}(\Omega)} .
     \end{align}

     $\underline{Step\ 5.}$
Finally, we need to bound $\|\lambda-\lambda_{\mathcal{T}_h}\|$. By (\ref{lambda}) and (\ref{lamdath}) we have that
\begin{eqnarray*}
\|\lambda-\lambda_{\mathcal{T}_h}\|^2\leq \frac{C}{\beta^2}\|p-p_{\mathcal{T}_h}\|^2\leq \frac{C}{\beta^2}\|p-p_{\mathcal{T}_h}\|^2_{\widetilde{H}^{\frac{\alpha}{2}}(\Omega)}.
\end{eqnarray*}
Using (\ref{opp-pTh}), we can get
\begin{align*}
\|\lambda-\lambda_{\mathcal{T}_h}\|^2&\leq  \frac{C}{(\beta\gamma)^2}\Theta^2(h)\left( \|p- p_{\mathcal{T}_h}\|^2_{\widetilde{H}^{\frac{\alpha}{2}}(\Omega)} + \|y - y_{\mathcal{T}_h}\|^2_{\widetilde{H}^{\frac{\alpha}{2}}(\Omega)} \right)+\frac{C}{\beta^2}\|\tilde{y}-y_{\mathcal{T}_h}\|^2_{\widetilde{H}^{\frac{\alpha}{2}}(\Omega)}+ \frac{C}{\beta^2} \| \tilde{p}-p_{\mathcal{T}_h}\|^2_{\widetilde{H}^{\frac{\alpha}{2}}(\Omega)} .
\end{align*}
Thus by Lemma \ref{upper-lower} we further derive
\begin{align*}
&\quad\|y-y_{\mathcal{T}_h}\|^2_{\widetilde{H}^{\frac{\alpha}{2}}(\Omega)} +\|p-p_{\mathcal{T}_h}\|_{\widetilde{H}^{\frac{\alpha}{2}}(\Omega)} ^2+\|u-u_{\mathcal{T}_h}\|^2+\|\lambda-\lambda_{\mathcal{T}_h}\|^2\\
&\leq \frac{C}{\gamma^2}\Theta^2(h)\left( \|p-p_{\mathcal{T}_h}\|_{\widetilde{H}^{\frac{\alpha}{2}}(\Omega)} + \| y- y_{\mathcal{T}_h}\|_{\widetilde{H}^{\frac{\alpha}{2}}(\Omega)}  \right)+ \frac{C}{(\gamma\beta)^2}\Theta^2(h)\left( \|p-p_{\mathcal{T}_h}\|_{\widetilde{H}^{\frac{\alpha}{2}}(\Omega)} + \| y- y_{\mathcal{T}_h}\|_{\widetilde{H}^{\frac{\alpha}{2}}(\Omega)}  \right)\\
&\quad+C(1+\frac{1}{\beta^2})\|\tilde{y}-y_{\mathcal{T}_h}\|^2_{\widetilde{H}^{\frac{\alpha}{2}}(\Omega)}+  C(1+\frac{1}{\beta^2}) \| \tilde{p}-p_{\mathcal{T}_h}\|^2_{\widetilde{H}^{\frac{\alpha}{2}}(\Omega)} \\
& \leq \mathcal{E}^2_{ocp}({y_{\mathcal{T}_h},p_{\mathcal{T}_h},\mathcal{T}_h)},
\end{align*}
which completes the proof.
\end{proof}

\begin{remark}
Since the control variable is implicitly discretized, the error estimators with respect to  $u$ and $\lambda$ are zeros. By (\ref{opy-yTh}) and (\ref{opp-pTh}) we can derive
the estimate only for state and adjoint state
   \begin{eqnarray*}
\|\mathbf{\bar{e}}\|_{\Omega}^{2}\leq \mathcal{E}_{ocp}^2(y_{\mathcal{T}_h},p_{\mathcal{T}_h},\mathcal{T}_h).
 \end{eqnarray*}Here $\mathbf{\bar{e}}=(e_y,e_p)^T$.
\end{remark}
\subsection{Efficiency of the error estimator $\mathcal{E}_{ocp}$}
\begin{theorem}\label{Efficiency}
Suppose that $(y,u,p,\lambda)$ and $(y_h,u_h,p_h,\lambda_h)$ are the solutions of the optimal control problem of (\ref{zuiyou}) and (\ref{lisan}), respectively. If $\tilde{y},\ \tilde{p}\in {\widetilde{H}}^{\frac{1}{2}+{\frac{\alpha}{2}}-\epsilon}(\Omega)\cap\widetilde{H}^{\frac{\alpha}{2}}(\Omega)$, for some parameter $0\leq\epsilon<\min\{{\frac{\alpha}{2}},\frac{1}{2}-\frac{\alpha}{2}\},$ then we have the error estimator $\mathcal{E}_{ocp}$, defined as in (\ref{E}) satisfied the following lower bound for $h<h_0\ll1$
 \begin{eqnarray}\label{Eocp}
\mathcal{E}^2_{ocp}({y_{\mathcal{T}_h},p_{\mathcal{T}_h},\mathcal{T}_h)}&\leq C\|\mathbf{e}\|_{\Omega}^{2}+C\sum\limits_{K\in \mathcal{T}_h}h_K^{1-2\epsilon}\|y-y_{\mathcal{T}_h}\|_{H^{{\frac{\alpha}{2}}+\frac{1}{2}-\epsilon}(\Omega^3_h(K))}^2+C\sum\limits_{K\in \mathcal{T}_h}h_K^{1-2\epsilon}\|p-p_{\mathcal{T}_h}\|_{H^{{\frac{\alpha}{2}}+\frac{1}{2}-\epsilon}(\Omega^3_h(K))}^2.
     \end{eqnarray}
\end{theorem}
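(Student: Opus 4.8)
The plan is to reduce the efficiency of $\mathcal{E}_{ocp}$ to the efficiency of the individual weighted residual estimators $E_y(y_{\mathcal{T}_h},\mathcal{T}_h)$ and $E_p(p_{\mathcal{T}_h},\mathcal{T}_h)$, which is already available from Lemma \ref{upper-lower}, and then to replace the auxiliary states $\tilde y,\tilde p$ by the true optimal states $y,p$ at the cost of the control error, which in turn is controlled by $\|\mathbf{e}\|_\Omega$. First I would invoke the lower bound in Lemma \ref{upper-lower}: since $y_{\mathcal{T}_h}$ and $p_{\mathcal{T}_h}$ are the Galerkin approximations of $\tilde y$ and $\tilde p$ (the solutions of \eqref{yhu}), and since $\tilde y,\tilde p\in \widetilde H^{\frac12+\frac\alpha2-\epsilon}(\Omega)\cap\widetilde H^{\frac\alpha2}(\Omega)$ with $0\le\epsilon<\min\{\frac\alpha2,\frac12-\frac\alpha2\}$ (this is where the hypothesis on $\epsilon$ and the restriction to $\alpha\le1$ implicit in $\sigma$ gets used — for $\alpha>1$ the efficiency part of Lemma \ref{upper-lower} would have to be read with the appropriate weighted norm), we get
\begin{eqnarray*}
E_y^2(y_{\mathcal{T}_h},\mathcal{T}_h)\le \mathbb{C}_{yeff}\Big(\|\tilde y-y_{\mathcal{T}_h}\|_{\widetilde H^{\frac\alpha2}(\Omega)}^2+\sum_{K}h_K^{1-2\epsilon}\|\tilde y-y_{\mathcal{T}_h}\|_{H^{\frac\alpha2+\frac12-\epsilon}(\Omega_h^3(K))}^2\Big),
\end{eqnarray*}
and similarly for $E_p$. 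Multiplying by $C_{st}$, $C_{ad}$ and summing gives an upper bound for $\mathcal{E}_{ocp}^2$ in terms of the local and global norms of $\tilde y-y_{\mathcal{T}_h}$ and $\tilde p-p_{\mathcal{T}_h}$.

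Next I would eliminate $\tilde y,\tilde p$ in favour of $y,p$ using the triangle inequality: $\|\tilde y-y_{\mathcal{T}_h}\|_{\widetilde H^{\frac\alpha2}(\Omega)}\le\|y-\tilde y\|_{\widetilde H^{\frac\alpha2}(\Omega)}+\|y-y_{\mathcal{T}_h}\|_{\widetilde H^{\frac\alpha2}(\Omega)}$, and likewise for the higher-order local norms $\|\tilde y-y_{\mathcal{T}_h}\|_{H^{\frac\alpha2+\frac12-\epsilon}(\Omega_h^3(K))}\le\|y-\tilde y\|_{H^{\frac\alpha2+\frac12-\epsilon}(\Omega_h^3(K))}+\|y-y_{\mathcal{T}_h}\|_{H^{\frac\alpha2+\frac12-\epsilon}(\Omega_h^3(K))}$ (and the same for $p$). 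The terms involving $y-y_{\mathcal{T}_h}$ and $p-p_{\mathcal{T}_h}$ are exactly the data-oscillation-type terms appearing on the right-hand side of \eqref{Eocp}, plus $\|\mathbf{e}\|_\Omega^2$. It remains to bound the "consistency" terms $\|y-\tilde y\|$ and $\|p-\tilde p\|$ in both the $\widetilde H^{\frac\alpha2}$ norm and the higher-order local norms. For the energy norm I would use the coercivity/stability of $a(\cdot,\cdot)$: since $a(y-\tilde y,v)=(u-u_{\mathcal{T}_h},v)$ for all $v\in\widetilde H^{\frac\alpha2}(\Omega)$, one gets $\|y-\tilde y\|_{\widetilde H^{\frac\alpha2}(\Omega)}\le\|u-u_{\mathcal{T}_h}\|\le\|\mathbf{e}\|_\Omega$, and analogously $\|p-\tilde p\|_{\widetilde H^{\frac\alpha2}(\Omega)}\le\|y-y_{\mathcal{T}_h}\|\le C\|y-y_{\mathcal{T}_h}\|_{\widetilde H^{\frac\alpha2}(\Omega)}\le C\|\mathbf{e}\|_\Omega$. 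For the higher-order local norms of $y-\tilde y$ and $p-\tilde p$ I would invoke the regularity estimate of Lemma \ref{state_regularity2} applied to the equations satisfied by $y-\tilde y$ (right-hand side $u-u_{\mathcal{T}_h}\in L^2$) and $p-\tilde p$ (right-hand side $y-y_{\mathcal{T}_h}\in L^2$), giving $\sum_K h_K^{1-2\epsilon}\|y-\tilde y\|_{H^{\frac\alpha2+\frac12-\epsilon}(\Omega_h^3(K))}^2\le C h_0^{1-2\epsilon}\|u-u_{\mathcal{T}_h}\|^2$ (using finite overlap of the patches $\Omega_h^3(K)$ and $h_K\le h_0$), which is absorbed into $C\|\mathbf{e}\|_\Omega^2$ once $h_0\ll1$, and similarly for $p$.

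The main obstacle I anticipate is the treatment of the higher-order local norm terms $\|y-\tilde y\|_{H^{\frac\alpha2+\frac12-\epsilon}(\Omega_h^3(K))}$: the global regularity bound of Lemma \ref{state_regularity2} has a constant blowing up like $\epsilon^{-\tau}$ and is only a global (not localized) statement, so one must either (i) accept the non-local $\Omega_h^3(K)$ patches and use a finite-overlap argument to pass from the sum over patches to a global norm, being careful that $H^{\frac\alpha2+\frac12-\epsilon}$ is a fractional (non-local) Sobolev norm for which "sum of local norms $\le$ $C\times$ global norm" is not entirely trivial, or (ii) work with the weighted/localized regularity results of \cite{Fau,bor}. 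I would follow route (i), quoting the finite-overlap property of $\{\Omega_h^3(K)\}_{K\in\mathcal{T}_h}$ (bounded by a shape-regularity constant) together with the fractional-Sobolev localization inequality, so that $\sum_K\|v\|_{H^{s}(\Omega_h^3(K))}^2\le C\|v\|_{H^s(\Omega)}^2$ for $s=\frac\alpha2+\frac12-\epsilon$; the remaining steps are then routine triangle-inequality bookkeeping and absorption of the $\Theta(h)$- or $h_0$-small factors, exactly as in Step 4 of the proof of Theorem \ref{Reliability}.
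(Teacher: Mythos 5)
Your proposal follows essentially the same route as the paper's proof: the efficiency bound of Lemma \ref{upper-lower} for $E_y,E_p$ with respect to the auxiliary solutions $\tilde y,\tilde p$, triangle inequalities to pass to $y,p$, stability of the state/adjoint equations to control $\|y-\tilde y\|_{\widetilde H^{\frac{\alpha}{2}}(\Omega)}$ and $\|p-\tilde p\|_{\widetilde H^{\frac{\alpha}{2}}(\Omega)}$ by the control and state errors, and a finite-overlap argument together with absorption of the $h^{1-2\epsilon}$-small factors for the patchwise higher-order terms. Your minor shortcuts (bounding $\|u-u_{\mathcal{T}_h}\|$ directly by $\|\mathbf{e}\|_{\Omega}$ and using a Poincar\'e-type bound in place of the paper's estimates (\ref{u-uTh}) and (\ref{yythor})) are harmless for the bound as stated, and the localization/regularity subtlety you flag for the $H^{\frac{\alpha}{2}+\frac{1}{2}-\epsilon}$ patch terms is handled in the paper in the same (equally informal) way via the overlap constant $M$.
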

\begin{proof}
The Theorem \ref{upper-lower}, implies that
 \begin{eqnarray}\label{Eybound}
 \hspace{-0.8in}  E^2_y(y_{\mathcal{T}_h},\mathcal{T}_h)&\leq&  \mathbb{C}_{yeff} \Big(\|\tilde{y}-y_{\mathcal{T}_h}\|_{\widetilde{H}^{\frac{\alpha}{2}}(\Omega)}^2+\sum\limits_{K\in \mathcal{T}_h}h_K^{1-2\epsilon}\|\tilde{y}-y_{\mathcal{T}_h}\|_{H^{{\frac{\alpha}{2}}+\frac{1}{2}-\epsilon}(\Omega^3_h(K))}^2\Big),\\
 \hspace{-0.8in}  E^2_p(p_{\mathcal{T}_h},\mathcal{T}_h)&\leq&  \mathbb{C}_{peff} \Big(\|\tilde{p}-p_{\mathcal{T}_h}\|_{\widetilde{H}^{\frac{\alpha}{2}}(\Omega)}^2+\sum\limits_{K\in \mathcal{T}_h}h_K^{1-2\epsilon}\|\tilde{p}-p_{\mathcal{T}_h}\|_{H^{{\frac{\alpha}{2}}+\frac{1}{2}-\epsilon}(\Omega^3_h(K))}^2\Big).
 \end{eqnarray}
 To bound $\|\tilde{y}-y_{\mathcal{T}_h}\|_{\widetilde{H}^{\frac{\alpha}{2}}(\Omega)}$, by (\ref{u-uTh}), we obtain that
\begin{align}\label{ywan-yth}
\|\tilde{y}-y_{\mathcal{T}_h}\|^2_{\widetilde{H}^{\frac{\alpha}{2}}(\Omega)}&\leq C\|\tilde{y}-y\|^2_{\widetilde{H}^{\frac{\alpha}{2}}(\Omega)}+C\|y-y_{\mathcal{T}_h}\|^2_{\widetilde{H}^{\frac{\alpha}{2}}(\Omega)}\nonumber\\
 &\leq C\|u_{\mathcal{T}_h}-u\|^2+C\|y-y_{\mathcal{T}_h}\|^2_{\widetilde{H}^{\frac{\alpha}{2}}(\Omega)}\nonumber\\
 &\leq  \frac{C}{\gamma^2}\Theta^2(h)\left( \|p-p_{\mathcal{T}_h}\|^2_{\widetilde{H}^{\frac{\alpha}{2}}(\Omega)} + \| y-y_{\mathcal{T}_h}\|^2_{\widetilde{H}^{\frac{\alpha}{2}}(\Omega)} \right)+C\|y-y_{\mathcal{T}_h}\|^2_{\widetilde{H}^{\frac{\alpha}{2}}(\Omega)}.
\end{align}
We can deal with the second term of (\ref{Eybound}) in a similar way
\begin{align*}
&\quad\sum\limits_{K\in \mathcal{T}_h}h_K^{1-2\epsilon}\|\tilde{y}-y_{\mathcal{T}_h}\|_{H^{{\frac{\alpha}{2}}+\frac{1}{2}-\epsilon}(\Omega^3_h(K))}^2\\
&\leq C\sum\limits_{K\in \mathcal{T}_h}h_K^{1-2\epsilon}\|\tilde{y}-y\|_{H^{{\frac{\alpha}{2}}+\frac{1}{2}-\epsilon}(\Omega^3_h(K))}^2+C\sum\limits_{K\in \mathcal{T}_h}h_K^{1-2\epsilon}\|y-y_{\mathcal{T}_h}\|_{H^{{\frac{\alpha}{2}}+\frac{1}{2}-\epsilon}(\Omega^3_h(K))}^2\\
&\leq CMh^{1-2\epsilon}\|\tilde{y}-y\|_{H^{{\frac{\alpha}{2}}+\frac{1}{2}-\epsilon}(\Omega)}^2+C\sum\limits_{K\in \mathcal{T}_h}h_K^{1-2\epsilon}\|y-y_{\mathcal{T}_h}\|_{H^{{\frac{\alpha}{2}}+\frac{1}{2}-\epsilon}(\Omega^3_h(K))}^2\\
&\leq \frac{C}{\gamma^2}Mh^{1-2\epsilon} \Theta^2(h)\left( \|p-p_{\mathcal{T}_h}\|^2_{\widetilde{H}^{\frac{\alpha}{2}}(\Omega)} + \| y-y_{\mathcal{T}_h}\|^2_{\widetilde{H}^{\frac{\alpha}{2}}(\Omega)}\right )+C\sum\limits_{K\in \mathcal{T}_h}h_K^{1-2\epsilon} \|y-y_{\mathcal{T}_h}\|^2_{H^{{\frac{\alpha}{2}}+\frac{1}{2}-\epsilon}(\Omega^3_h(K))},
\end{align*}
where $M$ denotes the  maximum times of an element $K$ appearing in all element patch $\Omega^3_h(K)$.
On the basis of (\ref{Eybound}) and the previous estimate, we immediately obtain the local efficiency of $E_y$
\begin{align*}
&E^2_y(y_{\mathcal{T}_h},\mathcal{T}_h)\\
 &\leq\mathbb{C}_{yeff} \bigg\{ \|p_{\mathcal{T}_h}-p\|^2_{\widetilde{H}^{\frac{\alpha}{2}}(\Omega)} + \| y_{\mathcal{T}_h}-y\|^2_{\widetilde{H}^{\frac{\alpha}{2}}(\Omega)}
+\sum\limits_{K\in \mathcal{T}_h}h_K^{1-2\epsilon} \|y-y_{\mathcal{T}_h}\|^2_{H^{{\frac{\alpha}{2}}+\frac{1}{2}-\epsilon}(\Omega^3_h(K))}\\
&\quad+\frac{C}{\gamma^2}\Theta^2(h)\left( \|p-p_{\mathcal{T}_h}\|^2_{\widetilde{H}^{\frac{\alpha}{2}}(\Omega)} + \| y-y_{\mathcal{T}_h}\|^2_{\widetilde{H}^{\frac{\alpha}{2}}(\Omega)} \right)+CMh^{1-2\epsilon} \Theta^2(h) \left(\|p- p_{\mathcal{T}_h}\|^2_{\widetilde{H}^{\frac{\alpha}{2}}(\Omega)} + \|y- y_{\mathcal{T}_h}\|^2_{\widetilde{H}^{\frac{\alpha}{2}}(\Omega)}\right)\bigg\}.
\end{align*}
 Assuming that the initial size of the mesh fulfills the following condition:  $Mh_0^{1-2\epsilon}\Theta^2(h_0)\leq C.$ For  $h_0\ll1 $, we can obtain
\begin{align}\label{Ey}
 E^2_y(y_{\mathcal{T}_h},\mathcal{T}_h)\leq C \left( \|p- p_{\mathcal{T}_h}\|^2_{\widetilde{H}^{\frac{\alpha}{2}}(\Omega)} + \|y - y_{\mathcal{T}_h}\|^2_{\widetilde{H}^{\frac{\alpha}{2}}(\Omega)}  + \sum\limits_{K\in \mathcal{T}_h}h_K^{1-2\epsilon} \|y-y_{\mathcal{T}_h}\|^2_{H^{{\frac{\alpha}{2}}+\frac{1}{2}-\epsilon}(\Omega^3_h(K))}\right).
\end{align}
Note that
\begin{eqnarray*}
 \|\tilde{p}-p_{\mathcal{T}_h}\|^2_{\widetilde{H}^{\frac{\alpha}{2}}(\Omega)}&\leq C\|\tilde{p}-p\|^2_{\widetilde{H}^{\frac{\alpha}{2}}(\Omega)}+C\|p-p_{\mathcal{T}_h}\|^2_{\widetilde{H}^{\frac{\alpha}{2}}(\Omega)}\\
 &\leq C\|y_{\mathcal{T}_h}-y\|^2+C\|p-p_{\mathcal{T}_h}\|^2_{\widetilde{H}^{\frac{\alpha}{2}}(\Omega)}.
\end{eqnarray*}
   We need to estimate $\|y-y_{\mathcal{T}_h}\|$ using the dual argument in the following analysis.
   Let $\psi$ be the solution of the following problem
\begin{eqnarray}\nonumber\left\{ \begin{aligned}
 (-\Delta)^s \psi&=y-y_{\mathcal{T}_h},& \mbox{in}\ \Omega,\\
   \psi&=0, &\mbox{in}\ \Omega^c.
\end{aligned}\right.
   \end{eqnarray}
 In an analogous way we obtain
  \begin{align*}
\| y-y_{\mathcal{T}_h}\|^2&=((-\Delta)^{\frac{\alpha}{2}} \psi,y-y_{\mathcal{T}_h})=a(\psi, y-y_{\mathcal{T}_h})\\
&=a(\psi-\psi_{\mathcal{T}_h}, y-y_{\mathcal{T}_h})+a(\psi_{\mathcal{T}_h}, y-y_{\mathcal{T}_h})\\
&=a(\psi-\psi_{\mathcal{T}_h}, y-y_{\mathcal{T}_h})+ (\psi_{\mathcal{T}_h}-\psi, u-u_{\mathcal{T}_h})+ (\psi, u-u_{\mathcal{T}_h})\\
&\leq\|\psi-\psi_{\mathcal{T}_h}\|_{\widetilde{H}^{\frac{\alpha}{2}}(\Omega)}\| y-y_{\mathcal{T}_h}\|_{\widetilde{H}^{\frac{\alpha}{2}}(\Omega)} + \|\psi_{\mathcal{T}_h}-\psi\|\ \| u-u_{\mathcal{T}_h}\|+ \|\psi\|\ \|u-u_{\mathcal{T}_h}\|\\
&\leq C\Theta(h)\| y-y_{\mathcal{T}_h}\| \| y-y_{\mathcal{T}_h}\|_{\widetilde{H}^{\frac{\alpha}{2}}(\Omega)} +C\Theta^2(h)\| y-y_{\mathcal{T}_h}\|\ \| u-u_{\mathcal{T}_h}\|+ C \| y-y_{\mathcal{T}_h}\|\ \|u-u_{\mathcal{T}_h}\|.
     \end{align*}
 Then we can get
 \begin{align}\label{yythor}
\|y-y_{\mathcal{T}_h}\|^2&\leq C\Theta^2(h) \| y-y_{\mathcal{T}_h}\|^2_{\widetilde{H}^{\frac{\alpha}{2}}(\Omega)} +C \|u-u_{\mathcal{T}_h}\|^2\nonumber\\
&\leq C\Theta^2(h) \| y-y_{\mathcal{T}_h}\|^2_{\widetilde{H}^{\frac{\alpha}{2}}(\Omega)} + \frac{C}{\gamma^2}\Theta^2(h)\left( \|p- p_{\mathcal{T}_h}\|^2_{\widetilde{H}^{\frac{\alpha}{2}}(\Omega)} + \|y- y_{\mathcal{T}_h}\|^2_{\widetilde{H}^{\frac{\alpha}{2}}(\Omega)}\right )\nonumber\\
&\leq C(1+\frac{1}{\gamma^2})\Theta^2(h) \| y-y_{\mathcal{T}_h}\|^2_{\widetilde{H}^{\frac{\alpha}{2}}(\Omega)} + \frac{C}{\gamma^2}\Theta^2(h) \|p- p_{\mathcal{T}_h}\|^2_{\widetilde{H}^{\frac{\alpha}{2}}(\Omega)}.
     \end{align}
     Using the previous inequality, we can obtain
     \begin{align*}
\|\tilde{p}-p_{\mathcal{T}_h}\|^2_{\widetilde{H}^{\frac{\alpha}{2}}(\Omega)}
 &\leq  C(1+\frac{1}{\gamma^2})\Theta^2(h) \| y-y_{\mathcal{T}_h}\|^2_{\widetilde{H}^{\frac{\alpha}{2}}(\Omega)} + C(1+\frac{1}{\gamma^2}\Theta^2(h)) \|p-p_{\mathcal{T}_h}\|^2_{\widetilde{H}^{\frac{\alpha}{2}}(\Omega)}
\end{align*}
and
\begin{align*}
&\quad\sum\limits_{K\in \mathcal{T}_h}h_K^{1-2\epsilon}\|\tilde{p}-p_{\mathcal{T}_h}\|_{H^{{\frac{\alpha}{2}}+\frac{1}{2}-\epsilon}(\Omega^3_h(K))}^2\\
&\leq C\sum\limits_{K\in \mathcal{T}_h}h_K^{1-2\epsilon}\|\tilde{p}-p\|_{H^{{\frac{\alpha}{2}}+\frac{1}{2}-\epsilon}(\Omega^3_h(K))}^2+C\sum\limits_{K\in \mathcal{T}_h}h_K^{1-2\epsilon}\|p-p_{\mathcal{T}_h}\|_{H^{{\frac{\alpha}{2}}+\frac{1}{2}-\epsilon}(\Omega^3_h(K))}^2\\
&\leq CMh^{1-2\epsilon}\|\tilde{p}-p\|_{H^{{\frac{\alpha}{2}}+\frac{1}{2}-\epsilon}(\Omega)}^2+C\sum\limits_{K\in \mathcal{T}_h}h_T^{1-2\epsilon}\|p-p_{\mathcal{T}_h}\|_{H^{{\frac{\alpha}{2}}+\frac{1}{2}-\epsilon}(\Omega^3_h(K))}^2\\
&\leq CMh^{1-2\epsilon}\|y_{\mathcal{T}_h}-y\|^2+C\sum\limits_{K\in \mathcal{T}_h}h_T^{1-2\epsilon}\|p-p_{\mathcal{T}_h}\|_{H^{{\frac{\alpha}{2}}+\frac{1}{2}-\epsilon}(\Omega^3_h(K))}^2\\
&\leq CMh^{1-2\epsilon} \Theta^2(h)\left( (1+\frac{1}{\gamma^2}) \| y-y_{\mathcal{T}_h}\|^2_{\widetilde{H}^{\frac{\alpha}{2}}(\Omega)} + \frac{1 }{\gamma^2} \|p- p_{\mathcal{T}_h}\|^2_{\widetilde{H}^{\frac{\alpha}{2}}(\Omega)} \right) +C\sum\limits_{K\in \mathcal{T}_h}h_K^{1-2\epsilon} \|p-p_{\mathcal{T}_h}\|^2_{H^{{\frac{\alpha}{2}}+\frac{1}{2}-\epsilon}(\Omega^3_h(K))}.
\end{align*}
Thus we have
\begin{align}\label{Ep}
 E^2_p(p_{\mathcal{T}_h},\mathcal{T}_h)\leq C\left( \|p- p_{\mathcal{T}_h}\|^2_{\widetilde{H}^{\frac{\alpha}{2}}(\Omega)} + \|y - y_{\mathcal{T}_h}\|^2_{\widetilde{H}^{\frac{\alpha}{2}}(\Omega)}+\sum\limits_{K\in \mathcal{T}_h}h_K^{1-2\epsilon} \|p-p_{\mathcal{T}_h}\|^2_{H^{{\frac{\alpha}{2}}+\frac{1}{2}-\epsilon}(\Omega^3_h(K))}\right).
\end{align}
Combining the estimate  (\ref{Ey}) and (\ref{Ep}) we derive
\begin{align*}
\mathcal{E}_{ocp}^2(y_{\mathcal{T}_h},p_{\mathcal{T}_h},\mathcal{T}_h)& = E^2_y(y_{\mathcal{T}_h},\mathcal{T}_h)+ E^2_p(p_{\mathcal{T}_h},\mathcal{T}_h)\\
& \leq C( \|e_p\|^2_{\widetilde{H}^{\frac{\alpha}{2}}(\Omega)}  +   \| e_y\|^2_{\widetilde{H}^{\frac{\alpha}{2}}(\Omega)} ) +C\sum\limits_{K\in \mathcal{T}_h}h_K^{1-2\epsilon} \|y-y_{\mathcal{T}_h}\|^2_{H^{{\frac{\alpha}{2}}+\frac{1}{2}-\epsilon}(\Omega^3_h(K))}+C\sum\limits_{K\in \mathcal{T}_h}h_K^{1-2\epsilon} \|p-p_{\mathcal{T}_h}\|^2_{H^{{\frac{\alpha}{2}}+\frac{1}{2}-\epsilon}(\Omega^3_h(K))}\\
& \leq C \|\mathbf{e}\|^2_{\Omega}  +C\sum\limits_{K\in \mathcal{T}_h}h_K^{1-2\epsilon} \|y-y_{\mathcal{T}_h}\|^2_{H^{{\frac{\alpha}{2}}+\frac{1}{2}-\epsilon}(\Omega^3_h(K))}+C\sum\limits_{K\in \mathcal{T}_h}h_K^{1-2\epsilon} \|p-p_{\mathcal{T}_h}\|^2_{H^{{\frac{\alpha}{2}}+\frac{1}{2}-\epsilon}(\Omega^3_h(K))}.
\end{align*}
This concludes the proof.
\end{proof}
\section{ AFEMs and convergence analysis}
 $\bullet$ The optimal control $u$, due to the sparsity term $j_2(u)$ in the cost functional, is sparse and has sparsely support sets within $\Omega$.\\
 $\bullet$ The fractional Laplacian operator $ (-\Delta)^{\frac{\alpha}{2}}$ is nonlocal (\cite{cab,caf,caf2}) and can lead to singularity of the state variable and the adjoint variable near the boundary (\cite{cap}), which leads to a lower convergence rate (\cite{ban,noch}).

To overcome these hurdle, adaptive mesh refinement methods can be employed.  The method facilitate more comprehensive mesh refinement in areas where the solution singularity is intense, and consequently improving the numerical solution's accuracy.
\subsection{AFEMs}
Utilizing the residual error estimator $\mathcal{E}_{ocp}^2(y_{\mathcal{T}_{h}},p_{\mathcal{T}_{h}},\mathcal{T}_{h})$ to measure local contributions, we explore an established technique for adaptive mesh refinement known as $\mathbf{SOLVE}-\mathbf{ESTIMATE}-\mathbf{MARK}-\mathbf{REFINE}$, which employs D$\rm{\ddot{o}}$rfler's marking criterion to designate elements for refinement.\\

\begin{algorithm}[!htbp]
\caption{Design\ of\ the\ AFEMs:}
 $\star\ \mathbf{SOLVE}$: Initial mesh ${\mathcal{T}_{h_{0}}}$ with mesh size $h_0$, constraints $a$ and $b$, regularization parameter $\gamma$, sparsity parameter $\beta$.  Set $k=0$ and solve   (\ref{lisan})  to obtain
 $$(y_{{\mathcal{T}_{h_k}}}, p_{{\mathcal{T}_{h_k}}}, u_{{\mathcal{T}_{h_k}}})=\mathbf{SOLVE}\left( \mathbb{V}_{{\mathcal{T}_{h_k}}}\times \mathbb{V}_{{\mathcal{T}_h}_k}\times U_{ad}\right).$$

 $\star\ \mathbf{ESTIMATE}$: Compute the local error indicator
$$
\mathcal{E}_{ocp}^2(y_{\mathcal{T}_{h_{k}}},p_{\mathcal{T}_{h_{k}}},\mathcal{T}_{h_{k}})=\sum\limits_{K\in \mathcal{T}_{h_{k}}}\left( E^2_y(y_{\mathcal{T}_{h_{k}}},\mathcal{T}_{h_{k}})+ E^2_p(p_{\mathcal{T}_{h_{k}}},\mathcal{T}_{h_{k}})\right)=\mathbf{ESTIMATE}\left(y_{\mathcal{T}_{h_{k}}},p_{\mathcal{T}_{h_{k}}},\mathcal{T}_{h_{k}}\right)
$$
defined by (\ref{Eydefin}), (\ref{Epdefin}) and (\ref{Eocp}).

 $\star\ \mathbf{MARK}$: Given a parameter $0<\theta<1$; Construct a minimal subset $\mathcal{M}_k\subset{\mathcal{T}_{h_k}}$ such that
  $$\mathcal{M}_k=\mathbf{MARK}\left\{\mathcal{E}_{ocp}^2(y_{\mathcal{T}_{h_{k}}},p_{\mathcal{T}_{h_{k}}},\mathcal{M}_k)\right\}\geq \theta\mathcal{E}_{ocp}^2(y_{\mathcal{T}_{h_{k}}},p_{\mathcal{T}_{h_{k}}},\mathcal{T}_{h_{k}}) .$$

 $\star\ \mathbf{ REFINE}$: We bisect all the elements $K\in\mathcal{T}_{h_k}$ that are contained in
$\mathcal{M}_k$ with the newest-vertex bisection method and create a new mesh
$\mathcal{T}_{h_{k+1}}$. Refine
$$\mathcal{M}_{k+1}=\mathbf{ REFINE}\left(\mathcal{M}_k\right).$$
\end{algorithm}

In the first step of $\mathbf{SOLVE}$, we used the following projection gradient algorithm:

\begin{algorithm}[!htbp]
\caption{Projection gradient algorithm}

$\mathbf{Input}$

  Start with the mesh $\mathcal{T}_{h_{t}}$ with mesh size $h_t$.

$\mathbf{Start}$

Given the initial value $ u^0_{\mathcal{T}_{h_{t}}}$,  and a tolerance
   $\mathrm{Tol}_{\mathrm{space}}>0$.

$\mathbf{While}$\ $error>\mathrm{Tol}_{\mathrm{space}}$

   $\mathbf{1.}$ \
   Solving the state equation in  (\ref{lisan}) to get state variable $y_{\mathcal{T}_{h_t}}$;

  $\mathbf{2.}$  \ Solving the adjoint state equation in  (\ref{lisan})  to obtain adjoint state variable $p_{\mathcal{T}_{h_t}}$;

  $\mathbf{3.}$ \ Using (\ref{lamdath}) to compute the associted subgradient and control variable
$$\lambda_{\mathcal{T}_{h_{t}}}=\min\{ 1, \max\{-1,-\frac{1}{\beta}p_{\mathcal{T}_{h_t}} \} \},\ u^{new}_{\mathcal{T}_{h_{t}}}=\min\{b,\max\{a,-\frac{1}{\gamma}(p_{\mathcal{T}_{h_t}}+\beta\lambda_{\mathcal{T}_{h_t}})\}\}.$$
$\mathbf{4.}$  Calculate the error:
 $ error=norm(u^0_{\mathcal{T}_{h_t}}-u^{new}_{\mathcal{T}_{h_{t}}},inf).$

$\mathbf{5.}$\ Update the control variable $u^0_{\mathcal{T}_{h_t}}=u^{new}_{\mathcal{T}_{h_{t}}}.$

$\mathbf{End\ While}$
\end{algorithm}

\subsection{Convergence analysis}
\ \ \ To establish the quasi-optimality of Adaptive Finite Element Methods (AFEMs), we employ the framework proposed by Carstensen et al. in \cite{Car}. The fulfillment of several prerequisites is necessary to establish quasi-optimality in adaptive algorithms: (1) Stability, (2) Reduction, (3) Discrete reliability and (4) Quasi-orthogonality. The stability prerequisite ensures the stability of error estimate on non-refined elements, while the reduction prerequisite guarantees a reduction in error on refined elements. The discrete reliability ensures the ability of the error estimators on refined elements to effectively control the error between coarse and fine grid solutions. The quasi-orthogonality prerequisite involves providing a measure for the relationship between the error estimators and the exact errors. These requirements will be rigorously validated through a series of mathematical proofs.
\begin{theorem}(Stability) \label{Stability}
We use $\mathcal{T}_h$ to denote the refinements of $\mathcal{T}_H$. For any subsets $\mathcal{U}\subset {\mathcal{T}_h}\cap \mathcal{T}_H$, there holds
\begin{eqnarray*}
\left|\left(\sum\limits_{K\in\mathcal{U}}\mathcal{E}_{K}^2(y_{\mathcal{T}_h},p_{\mathcal{T}_h},K)\right)^{\frac{1}{2}}- \left(\sum\limits_{K\in\mathcal{U}}\mathcal{E}_{K}^2(y_{\mathcal{T}_H},p_{\mathcal{T}_H},K)\right)^{\frac{1}{2}}\right|\leq C_{ stab}\left(\|y_{\mathcal{T}_h}-y_{\mathcal{T}_H}\|_{\widetilde{H}^{\frac{\alpha}{2}}(\Omega)}+\|p_{\mathcal{T}_h}-p_{\mathcal{T}_H}\|_{\widetilde{H}^{\frac{\alpha}{2}}(\Omega)}\right),
	\end{eqnarray*}
where the constant $C_{ stab}>0$.
\end{theorem}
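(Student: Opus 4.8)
The plan is to reduce the estimate to the elementwise triangle inequality in $L^2(K)$ followed by a reverse triangle inequality on the square roots of sums. First I would observe that, by the (reverse) triangle inequality for the $\ell^2$-norm of the vector of local contributions, it suffices to control
\begin{eqnarray*}
\left(\sum_{K\in\mathcal{U}}\left|\mathcal{E}_{K}(y_{\mathcal{T}_h},p_{\mathcal{T}_h},K)-\mathcal{E}_{K}(y_{\mathcal{T}_H},p_{\mathcal{T}_H},K)\right|^2\right)^{\frac12}.
\end{eqnarray*}
Since $\mathcal{E}_{K}^2 = C_{st}E_y^2(\cdot,K)+C_{ad}E_p^2(\cdot,K)$, and each $E_y(\cdot,K)$, $E_p(\cdot,K)$ is itself a weighted $L^2(K)$-norm of a residual, I would apply the triangle inequality in $L^2(K)$ to the differences of the residuals. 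Concretely, for the state term,
\begin{eqnarray*}
\left|E_y(y_{\mathcal{T}_h},K)-E_y(y_{\mathcal{T}_H},K)\right|
\le \left\|\widetilde{h}_K^{\frac{\alpha}{2}}\big((u_{\mathcal{T}_h}-u_{\mathcal{T}_H})-(-\Delta)^{\frac{\alpha}{2}}(y_{\mathcal{T}_h}-y_{\mathcal{T}_H})\big)\right\|_{L^2(K)},
\end{eqnarray*}
using that $f$ cancels and that $\widetilde{h}_K^{\alpha/2}$ is the \emph{same} weight for both meshes on $K\in\mathcal{T}_h\cap\mathcal{T}_H$ (this is precisely why we restrict to the common refinement elements). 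Because of the variational discretization, $u_{\mathcal{T}_h}=\Pi_{[a,b]}(-\frac1\gamma(p_{\mathcal{T}_h}+\beta\lambda_{\mathcal{T}_h}))$ with $\lambda_{\mathcal{T}_h}=\Pi_{[-1,1]}(-\frac1\beta p_{\mathcal{T}_h})$, so $u_{\mathcal{T}_h}$ is a Lipschitz function of $p_{\mathcal{T}_h}$ with constant depending only on $\gamma,\beta$; hence $\|u_{\mathcal{T}_h}-u_{\mathcal{T}_H}\|_{L^2(\Omega)}\le C(\gamma,\beta)\|p_{\mathcal{T}_h}-p_{\mathcal{T}_H}\|_{L^2(\Omega)}\le C\|p_{\mathcal{T}_h}-p_{\mathcal{T}_H}\|_{\widetilde H^{\alpha/2}(\Omega)}$.

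Next I would bound the two pieces of the residual difference. The data-difference term is handled by $\|\widetilde{h}_K^{\alpha/2}\|_{L^\infty(K)}\le h_K^{\alpha/2}\le C$ (or $\le C h_0^{\alpha/2}$ on the initial mesh, hence uniformly bounded), so summing over $K\in\mathcal{U}$ gives $C\|u_{\mathcal{T}_h}-u_{\mathcal{T}_H}\|_{L^2(\Omega)}$, which is absorbed into $\|p_{\mathcal{T}_h}-p_{\mathcal{T}_H}\|_{\widetilde H^{\alpha/2}(\Omega)}$ as above. For the term $\widetilde{h}_K^{\alpha/2}(-\Delta)^{\alpha/2}(y_{\mathcal{T}_h}-y_{\mathcal{T}_H})$ I would invoke the weighted $L^2$ stability/mapping estimate from \cite{Fau} for the fractional Laplacian applied to the discrete difference $y_{\mathcal{T}_h}-y_{\mathcal{T}_H}\in\mathbb{V}_{\mathcal{T}_h}$ (the key inverse-type inequality there controls $\|\widetilde{h}^{\alpha/2}(-\Delta)^{\alpha/2}v_{\mathcal{T}_h}\|_{L^2(\mathcal{T}_h)}$ by $\|v_{\mathcal{T}_h}\|_{\widetilde H^{\alpha/2}(\Omega)}$ for piecewise-linear $v_{\mathcal{T}_h}$), yielding $\le C\|y_{\mathcal{T}_h}-y_{\mathcal{T}_H}\|_{\widetilde H^{\alpha/2}(\Omega)}$. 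The adjoint term $E_p$ is treated identically, with residual difference $(y_{\mathcal{T}_h}-y_{\mathcal{T}_H})-(-\Delta)^{\alpha/2}(p_{\mathcal{T}_h}-p_{\mathcal{T}_H})$; here the extra piece $\|\widetilde{h}_K^{\alpha/2}(y_{\mathcal{T}_h}-y_{\mathcal{T}_H})\|_{L^2(K)}\le C\|y_{\mathcal{T}_h}-y_{\mathcal{T}_H}\|_{L^2(\Omega)}\le C\|y_{\mathcal{T}_h}-y_{\mathcal{T}_H}\|_{\widetilde H^{\alpha/2}(\Omega)}$ by the Poincaré-type embedding on $\widetilde H^{\alpha/2}(\Omega)$. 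Collecting the bounds and taking $C_{stab}$ to be the maximum of the resulting constants (times $\sqrt{C_{st}}$, $\sqrt{C_{ad}}$) completes the argument.

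I expect the main obstacle to be the weighted inverse estimate $\|\widetilde{h}_K^{\alpha/2}(-\Delta)^{\alpha/2}v_{\mathcal{T}_h}\|_{L^2(\mathcal{T}_h)}\lesssim \|v_{\mathcal{T}_h}\|_{\widetilde H^{\alpha/2}(\Omega)}$ for discrete functions in the regime $\frac32<\alpha<2$, where $(-\Delta)^{\alpha/2}v_{\mathcal{T}_h}$ is not in $L^2$ without the distance-to-skeleton weight $\omega_{\mathcal{T}_h}^\sigma$; the delicate point is that the weight $\widetilde{h}_K^{\alpha/2}=h_K^{\alpha/2-\sigma}\omega_{\mathcal{T}_h}^{\sigma}$ must be shown to exactly compensate the blow-up of $(-\Delta)^{\alpha/2}$ applied to a piecewise-linear function near element interfaces. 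This is exactly the content of the weighted-residual machinery of \cite{Fau} underlying Lemma~\ref{upper-lower}, so I would cite it rather than reprove it; otherwise the proof is a routine chain of triangle inequalities and the Lipschitz dependence of $(u_{\mathcal{T}_h},\lambda_{\mathcal{T}_h})$ on $p_{\mathcal{T}_h}$.
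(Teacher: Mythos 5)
Your proposal is correct and follows essentially the same route as the paper: a (reverse) triangle inequality reduces the estimator difference to weighted $L^2$ norms of residual differences on the common elements, the control difference is absorbed via the Lipschitz continuity of the projection formulas $u_{\mathcal{T}}=\Pi_{[a,b]}(-\frac1\gamma(p_{\mathcal{T}}+\beta\lambda_{\mathcal{T}}))$, $\lambda_{\mathcal{T}}=\Pi_{[-1,1]}(-\frac1\beta p_{\mathcal{T}})$, and the terms $\widetilde h^{\alpha/2}(-\Delta)^{\alpha/2}(y_{\mathcal{T}_h}-y_{\mathcal{T}_H})$, $\widetilde h^{\alpha/2}(-\Delta)^{\alpha/2}(p_{\mathcal{T}_h}-p_{\mathcal{T}_H})$ are controlled by the weighted inverse estimate for the fractional Laplacian from \cite{Fau} applied to discrete functions in $\mathbb{V}_{\mathcal{T}_h}$. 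Your elementwise $\ell^2$ reverse-triangle formulation and the observation that $\omega_{\mathcal{T}}|_K=\mathrm{dist}(\cdot,\partial K)$ coincides on common elements are just slightly more careful renderings of the same argument.
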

\begin{proof}
From the definition of (\ref{Eydefin}), (\ref{Epdefin}) and (\ref{Eocp}) we can obtain
	\begin{align*}
&\left|\left(\sum\limits_{K\in\mathcal{U}}\mathcal{E}_{K}^2(y_{\mathcal{T}_h},p_{\mathcal{T}_h},K)\right)^{\frac{1}{2}}- \left(\sum\limits_{K\in\mathcal{U}}\mathcal{E}_{K}^2(y_{\mathcal{T}_H},p_{\mathcal{T}_H},K)\right)^{\frac{1}{2}}\right|\\
&\leq \|\widetilde{h}^{\frac{\alpha}{2}}_{\mathcal{T}_h}(f+u_{\mathcal{T}_h}-(-\Delta )^{\frac{\alpha}{2}}y_{\mathcal{T}_h})\|_{L^2(\omega)}+\|\widetilde{h}^{\frac{\alpha}{2}}_{\mathcal{T}_h}(y_{\mathcal{T}_h}-y_d-(-\Delta )^{\frac{\alpha}{2}}p_{\mathcal{T}_h})\|_{L^2(\omega)}\\
&\quad-\|\widetilde{h}^{\frac{\alpha}{2}}_{\mathcal{T}_h}(f+u_{\mathcal{T}_H}-(-\Delta )^{\frac{\alpha}{2}}y_{\mathcal{T}_H})\|_{L^2(\omega)}-\|\widetilde{h}^{\frac{\alpha}{2}}_{\mathcal{T}_h}(y_{\mathcal{T}_H}-y_d-(-\Delta )^{\frac{\alpha}{2}}p_{\mathcal{T}_H})\|_{L^2(\omega)}\\
&\leq \|\widetilde{h}^{\frac{\alpha}{2}}_{\mathcal{T}_h}(-\Delta )^{\frac{\alpha}{2}}(y_{\mathcal{T}_H}-y_{\mathcal{T}_h})\|_{L^2(\omega)}+\|\widetilde{h}^{\frac{\alpha}{2}}_{\mathcal{T}_h}(-\Delta )^{\frac{\alpha}{2}}(p_{\mathcal{T}_H}-p_{\mathcal{T}_h})\|_{L^2(\omega)} +\|\widetilde{h}^{\frac{\alpha}{2}}_{\mathcal{T}_h}(u_{\mathcal{T}_h}-u_{\mathcal{T}_H})\|_{L^2(\omega)}+\|\widetilde{h}^{\frac{\alpha}{2}}_{\mathcal{T}_h}(y_{\mathcal{T}_h}-y_{\mathcal{T}_H})\|_{L^2(\omega)},
	\end{align*}
where $\omega:=\mathrm{interior}(\bigcup\limits_{{K}\in \mathcal{U}}\overline{K}).$ Note that
$u_{\mathcal{T}_h}=\Pi_{[a,b]}\left(-\frac{1}{\gamma}(p_{\mathcal{T}_h}+\beta\lambda_{\mathcal{T}_h})\right)$
and
$u_{\mathcal{T}_H}=\Pi_{[a,b]}\left(-\frac{1}{\gamma}(p_{\mathcal{T}_H}+\beta\lambda_{\mathcal{T}_H})\right).$
By the Lipschitz continuity of the operator $\Pi_{[a,b]}$, we have that
\begin{align*}
\|u_{\mathcal{T}_h}-u_{\mathcal{T}_H}\|&=\left\|\Pi_{[a,b]}\left(-\frac{1}{\gamma}(p_{\mathcal{T}_h}+\beta\lambda_{\mathcal{T}_h})\right)
-\Pi_{[a,b]}\left(-\frac{1}{\gamma}(p_{\mathcal{T}_H}+\beta\lambda_{\mathcal{T}_H})\right)\right\|\\
&\leq\frac{C}{\gamma}\|p_{\mathcal{T}_H}-p_{\mathcal{T}_h}\|+\frac{C\beta}{\gamma}\|\lambda_{\mathcal{T}_H}-\lambda_{\mathcal{T}_h}\|.
	\end{align*}
An application of
$\lambda_{\mathcal{T}_h}=\Pi_{[-1,1]}\left(-\frac{1}{\beta}p_{\mathcal{T}_h}\right)$
 and
 $ \lambda_{\mathcal{T}_H}=\Pi_{[-1,1]}\left(-\frac{1}{\beta}p_{\mathcal{T}_H}\right)$ yields
\begin{eqnarray*}
\|u_{\mathcal{T}_h}-u_{\mathcal{T}_H}\|\leq \frac{C}{\gamma}\|p_{\mathcal{T}_H}-p_{\mathcal{T}_h}\|\leq \frac{C}{\gamma} \|p_{\mathcal{T}_H}-p_{\mathcal{T}_h}\|_{\widetilde{H}^{\frac{\alpha}{2}}(\Omega)}.
	\end{eqnarray*}
Further by the inverse estimate for fractional Laplacian (\cite{Fau}) we have
\begin{align*}
\|\widetilde{h}^{\frac{\alpha}{2}}_{\mathcal{T}_h}(-\Delta )^{\frac{\alpha}{2}}y_{{\mathcal{T}_h}}\|_{L^2(\Omega)}\leq C\|y_{\mathcal{T}_h}\|_{\widetilde{H}^{\frac{\alpha}{2}}(\Omega)}\ \mbox{and} \  \|\widetilde{h}^{\frac{\alpha}{2}}_{\mathcal{T}_h}(-\Delta )^{\frac{\alpha}{2}}p_{{\mathcal{T}_h}}\|_{L^2(\Omega)}\leq C\|p_{\mathcal{T}_h}\|_{\widetilde{H}^{\frac{\alpha}{2}}(\Omega)}.
 	\end{align*}
This result allows us to derive that
\begin{eqnarray*}
\left|\left(\sum\limits_{K\in\mathcal{U}}\mathcal{E}_{K}^2(y_{\mathcal{T}_h},p_{\mathcal{T}_h},K)\right)^{\frac{1}{2}}- \left(\sum\limits_{K\in\mathcal{U}}\mathcal{E}_{K}^2(y_{\mathcal{T}_H},p_{\mathcal{T}_H},K)\right)^{\frac{1}{2}}\right|\leq C_{ stab}\left(\|y_{\mathcal{T}_h}-y_{\mathcal{T}_H}\|_{\widetilde{H}^{\frac{\alpha}{2}}(\Omega)}+\|p_{\mathcal{T}_h}-p_{\mathcal{T}_H}\|_{\widetilde{H}^{\frac{\alpha}{2}}(\Omega)}\right).
	\end{eqnarray*}
\end{proof}

\begin{theorem}(Reduction)\label{thtH} We use $\mathcal{T}_h$ to denote the refinements of $\mathcal{T}_H$. Then we have
\begin{eqnarray*}
	\mathcal{E}_{ocp}^2(y_{\mathcal{T}_h},p_{\mathcal{T}_h},\mathcal{T}_{h}\setminus\mathcal{T}_{H})\leq Q_{ red}\ \mathcal{E}_{ocp}^2(y_{\mathcal{T}_H},p_{\mathcal{T}_H},\mathcal{T}_{H}\setminus\mathcal{T}_{h})+C_{ red}\left(\|y_{\mathcal{T}_h}-y_{\mathcal{T}_H}\|_{\widetilde{H}^{\frac{\alpha}{2}}(\Omega)}^2+\|p_{\mathcal{T}_h}-p_{\mathcal{T}_H}\|_{\widetilde{H}^{\frac{\alpha}{2}}(\Omega)}^2\right),\nonumber
	\end{eqnarray*}
where the constant $C_{red}>0$,\ $Q_{red}=2^{-\frac{\rho\alpha}{2d}},$ for $0<\alpha\leq1;$ $Q_{red}=2^{-\frac{\rho(\alpha-2\sigma)}{2d}},$ for $1<\alpha<2.$ Here $0<\sigma=\frac{\alpha}{2}-\frac{1}{2}<\frac{\alpha}{2}$ and $\frac{\alpha}{2}-\sigma>0.$
\end{theorem}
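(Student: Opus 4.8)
The plan is to argue elementwise over the set $\mathcal{T}_{h}\setminus\mathcal{T}_{H}$ of refined simplices: each local indicator is split into a part that reproduces the \emph{coarse} discrete residual on $\mathcal{T}_{H}$ and a part that only measures the distance between the two discrete pairs, and the contraction factor $Q_{red}<1$ is harvested from the fact that newest-vertex bisection strictly shrinks both the element diameter and the mesh-skeleton weight. The perturbation part will then be absorbed into $\|y_{\mathcal{T}_h}-y_{\mathcal{T}_H}\|^2_{\widetilde{H}^{\frac{\alpha}{2}}(\Omega)}+\|p_{\mathcal{T}_h}-p_{\mathcal{T}_H}\|^2_{\widetilde{H}^{\frac{\alpha}{2}}(\Omega)}$ by means of the inverse estimate for the fractional Laplacian and the Lipschitz continuity of the projection operators, exactly as in the proof of Theorem~\ref{Stability}.

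First I would fix $K'\in\mathcal{T}_{h}\setminus\mathcal{T}_{H}$ and let $K\in\mathcal{T}_{H}\setminus\mathcal{T}_{h}$ be the ancestor with $K'\subset K$. Since $K'$ is produced from $K$ by at least $\rho$ bisections we have $|K'|\le 2^{-\rho}|K|$, hence $h_{K'}\le 2^{-\rho/d}h_{K}$; moreover, because $\mathcal{T}_{h}$ refines $\mathcal{T}_{H}$ the skeleton distance can only decrease, $\omega_{\mathcal{T}_{h}}\le\omega_{\mathcal{T}_{H}}$ pointwise. Feeding these two facts into the definition of $\widetilde{h}^{\frac{\alpha}{2}}_{K}$ yields, pointwise on $K'$, $\widetilde{h}^{\frac{\alpha}{2}}_{K'}\le Q_{red}\,\widetilde{h}^{\frac{\alpha}{2}}_{K}$, with $Q_{red}=2^{-\rho\alpha/(2d)}$ when $0<\alpha\le1$ and $Q_{red}=2^{-\rho(\alpha-2\sigma)/(2d)}$ when $1<\alpha<2$; here the positivity $\frac{\alpha}{2}-\sigma=\frac12>0$ is precisely what keeps $Q_{red}<1$. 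Writing $f+u_{\mathcal{T}_h}-(-\Delta)^{\frac{\alpha}{2}}y_{\mathcal{T}_h}=\bigl(f+u_{\mathcal{T}_H}-(-\Delta)^{\frac{\alpha}{2}}y_{\mathcal{T}_H}\bigr)+\bigl((u_{\mathcal{T}_h}-u_{\mathcal{T}_H})-(-\Delta)^{\frac{\alpha}{2}}(y_{\mathcal{T}_h}-y_{\mathcal{T}_H})\bigr)$ and applying $(a+b)^2\le(1+\delta)a^2+(1+\delta^{-1})b^2$ with $1+\delta=Q_{red}^{-1}$ together with the weight bound, $E^2_y(y_{\mathcal{T}_h},K')$ is bounded by $Q_{red}\,\|\widetilde{h}^{\frac{\alpha}{2}}_{K}(f+u_{\mathcal{T}_H}-(-\Delta)^{\frac{\alpha}{2}}y_{\mathcal{T}_H})\|^2_{L^2(K')}$ plus a constant times the squared perturbation on $K'$. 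Summing over all children of each $K$ collapses the first contribution to $Q_{red}\,E^2_y(y_{\mathcal{T}_H},\mathcal{T}_H\setminus\mathcal{T}_h)$; doing the same for $E^2_p$ delivers the term $Q_{red}\,\mathcal{E}_{ocp}^2(y_{\mathcal{T}_H},p_{\mathcal{T}_H},\mathcal{T}_H\setminus\mathcal{T}_h)$.

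It then remains to estimate the accumulated perturbation. Since $\mathcal{T}_h\setminus\mathcal{T}_H\subset\mathcal{T}_h$ and $\widetilde{h}^{\frac{\alpha}{2}}_{\mathcal{T}_h}$ is uniformly bounded on $\Omega$ (both $h_{\mathcal{T}_h}$ and $\omega_{\mathcal{T}_h}$ are controlled by the initial mesh), the state part is $\le C\|u_{\mathcal{T}_h}-u_{\mathcal{T}_H}\|^2+C\|\widetilde{h}^{\frac{\alpha}{2}}_{\mathcal{T}_h}(-\Delta)^{\frac{\alpha}{2}}(y_{\mathcal{T}_h}-y_{\mathcal{T}_H})\|^2_{L^2(\Omega)}$ and the adjoint part is analogous with $p$ in place of $y$ plus an extra $C\|y_{\mathcal{T}_h}-y_{\mathcal{T}_H}\|^2$. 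The inverse estimate for $(-\Delta)^{\frac{\alpha}{2}}$ from \cite{Fau} bounds $\|\widetilde{h}^{\frac{\alpha}{2}}_{\mathcal{T}_h}(-\Delta)^{\frac{\alpha}{2}}v_{\mathcal{T}_h}\|_{L^2(\Omega)}$ by $C\|v_{\mathcal{T}_h}\|_{\widetilde{H}^{\frac{\alpha}{2}}(\Omega)}$, and, as in Theorem~\ref{Stability}, the Lipschitz continuity of $\Pi_{[a,b]}$ together with $\lambda_{\mathcal{T}_h}=\Pi_{[-1,1]}(-\beta^{-1}p_{\mathcal{T}_h})$ gives $\|u_{\mathcal{T}_h}-u_{\mathcal{T}_H}\|\le C\gamma^{-1}\|p_{\mathcal{T}_h}-p_{\mathcal{T}_H}\|_{\widetilde{H}^{\frac{\alpha}{2}}(\Omega)}$; collecting the constants, which depend only on $\delta$, the inverse-estimate constant, $\gamma$, $\beta$, $C_{st}$ and $C_{ad}$, into $C_{red}$ finishes the argument.

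The main obstacle is the interaction of the anomalous weight $\widetilde{h}^{\frac{\alpha}{2}}_K=h_K^{\frac{\alpha}{2}-\sigma}\omega_{\mathcal{T}_h}^{\sigma}$ for $1<\alpha<2$ with refinement: one must verify that passing to a finer mesh does not increase the skeleton factor $\omega$ — true because additional faces can only bring the skeleton closer — and that the leftover $h_K$-power $\frac{\alpha}{2}-\sigma$ stays strictly positive, so that the combined effect is genuinely a contraction. A secondary difficulty is that, because $(-\Delta)^{\frac{\alpha}{2}}$ is nonlocal, the inverse estimate used for the perturbation is intrinsically global and cannot be localized to element patches; this is harmless here, however, since the contraction factor is produced entirely by the local diameter reduction in the second step and the perturbation only needs to be controlled globally.
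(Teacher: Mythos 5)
Your proposal is correct and follows essentially the same route as the paper: the contraction comes from the bisection bounds $\widetilde{h}^{\frac{\alpha}{2}}_{K'}\leq 2^{-\frac{\rho\alpha}{2d}}\widetilde{h}^{\frac{\alpha}{2}}_{K}$ (resp. $2^{-\frac{\rho(\alpha-2\sigma)}{2d}}$, using $\omega_{\mathcal{T}_h}\leq\omega_{\mathcal{T}_H}$), and the change of discrete solutions is absorbed exactly as in the stability theorem via the inverse estimate for $(-\Delta)^{\frac{\alpha}{2}}$ and the Lipschitz continuity of the projection formulas. Your Young-inequality splitting with $1+\delta=Q_{red}^{-1}$ is only an organizational variant (it even recovers the stated constant $Q_{red}$ precisely), not a different method.
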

\begin{proof}
Bisection ensures that $|K'|\leq|\frac{K}{2}|$ for any $K\in \mathcal{T}_H\backslash \mathcal{T}_h$ and its descendants $K'\in \mathcal{T}_h\backslash \mathcal{T}_H$ with $K'\subset K.$ Note that \begin{eqnarray}\label{H1}\widetilde{h}^{\frac{\alpha}{2}}_{K'}=(|K'|^{\frac{1}{d}})^{\frac{\alpha}{2}}\leq (2^{-\rho}|K|)^{\frac{\alpha}{2d}}=2^{-\frac{\rho\alpha}{2d}}\widetilde{h}^{\frac{\alpha}{2}}_{K}, \ \ \ \mathrm{for}\ \ 0<\alpha\leq1, \end{eqnarray}
we prove the Theorem with $Q_{red}=2^{-\frac{\rho\alpha}{2d}}.$ Here  $\rho$ denotes the bisection time of every element $K\in \mathcal{T}_H$ in the refinement.
By the definition of (\ref{Eydefin}), the relationship between $\mathcal{T}_h$ and $\mathcal{T}_H$ we can get
\begin{align}\label{ythtH}
\left(\sum\limits_{K'\in\mathcal{T}_h\backslash {\mathcal{T}_H}}E_{y}^2(y_{{\mathcal{T}_h}},K')\right)^{\frac{1}{2}}
&=\left(\sum\limits_{K'\in\mathcal{T}_h\backslash {\mathcal{T}_H}}\int_{L^2(K')}\widetilde{h}^{\alpha}_{K'}(f+u_{\mathcal{T}_h}-(-\Delta )^{\frac{\alpha}{2}}y_{\mathcal{T}_h})^2\right)^{\frac{1}{2}}\nonumber\\
&=\left(\sum\limits_{K'\in\mathcal{T}_h\backslash {\mathcal{T}_H}}|K'|^{\frac{\alpha}{d}}\|f+u_{\mathcal{T}_h}-(-\Delta )^{\frac{\alpha}{2}}y_{\mathcal{T}_h}\|^2_{L^2(K')}\right)^{\frac{1}{2}}\nonumber\\
&\leq2^{-\frac{\rho\alpha}{2d}}\left(\sum\limits_{K\in\mathcal{T}_H\backslash {\mathcal{T}_h}}|K|^{\frac{\alpha}{d}}\|f+u_{\mathcal{T}_H}-(-\Delta )^{\frac{\alpha}{2}}y_{\mathcal{T}_H}\|^2_{L^2(K)}\right)^{\frac{1}{2}}\nonumber\\
&= 2^{-\frac{\rho\alpha}{2d}}\left(\sum\limits_{K\in\mathcal{T}_H\backslash {\mathcal{T}_h}}E_{y}^2(y_{{\mathcal{T}_H}},K)\right)^{\frac{1}{2}}.
\end{align}
Similarly,
\begin{align*}
\left(\sum\limits_{K'\in\mathcal{T}_h\backslash {\mathcal{T}_H}}E_{p}^2(p_{{\mathcal{T}_h}},K')\right)^{\frac{1}{2}}
\leq 2^{-\frac{\rho\alpha}{2d}}\left(\sum\limits_{K\in\mathcal{T}_H\backslash {\mathcal{T}_h}}E_{p}^2(p_{{\mathcal{T}_H}},K)\right)^{\frac{1}{2}}.
\end{align*}
Then we have
\begin{align*}
\left(\sum\limits_{K'\in\mathcal{T}_h\backslash {\mathcal{T}_H}}\mathcal{E}_{K}^2(y_{\mathcal{T}_h},p_{\mathcal{T}_h},K')\right)^{\frac{1}{2}}
\leq 2^{-\frac{\rho\alpha}{2d}}\left(\sum\limits_{K\in\mathcal{T}_H\backslash {\mathcal{T}_h}}\mathcal{E}_{K}^2(y_{\mathcal{T}_H},p_{\mathcal{T}_H},K)\right)^{\frac{1}{2}}.
\end{align*}
Therefore, the previous estimate allows us to deduce the reduction property on the refined elements
\begin{align*}
&\quad\sum\limits_{K\in\mathcal{T}_h\backslash {\mathcal{T}_H}}\mathcal{E}_{K}^2(y_{\mathcal{T}_h},p_{\mathcal{T}_h},K)\\
&\leq\sum\limits_{K\in\mathcal{T}_h\backslash {\mathcal{T}_H}}\mathcal{E}_{K}^2(y_{\mathcal{T}_h},p_{\mathcal{T}_h},K)-\sum\limits_{K\in\mathcal{T}_h\backslash {\mathcal{T}_H}}\mathcal{E}_{K}^2(y_{\mathcal{T}_H},p_{\mathcal{T}_H},K)+\sum\limits_{K\in\mathcal{T}_h\backslash {\mathcal{T}_H}}\mathcal{E}_{K}^2(y_{\mathcal{T}_H},p_{\mathcal{T}_H},K)\\
&\leq\sum\limits_{K\in\mathcal{T}_h\backslash {\mathcal{T}_H}}\left(\|\widetilde{h}^{\frac{\alpha}{2}}_{K}(f+u_{\mathcal{T}_h}-(-\Delta )^{\frac{\alpha}{2}}y_{\mathcal{T}_h})\|_{L^2(K)}+\|\widetilde{h}^{\frac{\alpha}{2}}_{K}(y_{\mathcal{T}_h}-y_d-(-\Delta )^{\frac{\alpha}{2}}p_{\mathcal{T}_h})\|_{L^2(K)}\right.\\
&\left.\quad-\|\widetilde{h}^{\frac{\alpha}{2}}_{K}(f+u_{\mathcal{T}_H}-(-\Delta )^{\frac{\alpha}{2}}y_{\mathcal{T}_H})\|_{L^2(K)}-\|\widetilde{h}^{\frac{\alpha}{2}}_{K}(y_{\mathcal{T}_H}-y_d-(-\Delta )^{\frac{\alpha}{2}}p_{\mathcal{T}_H})\|_{L^2(K)}\right)+\sum\limits_{K\in\mathcal{T}_h\backslash {\mathcal{T}_H}}\mathcal{E}_{K}^2(y_{\mathcal{T}_H},p_{\mathcal{T}_H},K)\\
&\leq C_{ stab}\left(\|y_{\mathcal{T}_h}-y_{\mathcal{T}_H}\|^2_{\widetilde{H}^{\frac{\alpha}{2}}(\Omega)}+\|p_{\mathcal{T}_h}-p_{\mathcal{T}_H}\|^2_{\widetilde{H}^{\frac{\alpha}{2}}(\Omega)}\right)+
\sum\limits_{K\in\mathcal{T}_h\backslash {\mathcal{T}_H}}\mathcal{E}_{K}^2(y_{\mathcal{T}_H},p_{\mathcal{T}_H},K)\\
&\leq C_{ stab}\left(\|y_{\mathcal{T}_h}-y_{\mathcal{T}_H}\|^2_{\widetilde{H}^{\frac{\alpha}{2}}(\Omega)}+\|p_{\mathcal{T}_h}-p_{\mathcal{T}_H}\|^2_{\widetilde{H}^{\frac{\alpha}{2}}(\Omega)}\right)
+ 2^{-\frac{\rho\alpha}{d}}\sum\limits_{K\in\mathcal{T}_H\backslash {\mathcal{T}_h}}\mathcal{E}_{K}^2(y_{\mathcal{T}_H},p_{\mathcal{T}_H},K).
\end{align*}
 For $1<\alpha<2,$ we note that $0<\sigma=\frac{\alpha}{2}-\frac{1}{2}<\frac{\alpha}{2}$ and $\frac{\alpha}{2}-\sigma>0.$ Moreover, $\omega:=\bigcup\limits_{K'\in\mathcal{T}_h\backslash \mathcal{T}_H}\overline{K'}=\bigcup\limits_{K\in\mathcal{T}_H\backslash \mathcal{T}_h}\overline{K},$ then we have
\begin{eqnarray}\label{H2}\widetilde{h}^{\frac{\alpha}{2}}_{K'}=(|K'|^{\frac{1}{d}})^{{\frac{\alpha}{2}}-\sigma}\omega_{\mathcal{T}_h}^{\sigma}\leq (2^{-\rho}|K|)^{\frac{\alpha-2\sigma}{2d}}\omega_{{\mathcal{T}}_H}^{\sigma}=2^{-\frac{\rho(\alpha-2\sigma)}{2d}}\widetilde{h}^{\frac{\alpha}{2}}_{K}. \end{eqnarray}
Arguing as before, we prove the Theorem with $Q_{red}=2^{-\frac{\rho(\alpha-2\sigma)}{2d}}.$
\end{proof}

\begin{remark}\label{r2}
According to (\ref{ythtH}), we can obtain
\begin{eqnarray}
\sum\limits_{K\in\mathcal{T}_{h}\setminus\mathcal{T}_{H}}E_y^2(y_{\mathcal{T}_{H}},K)\leq 2^{-\frac{\rho\xi}{d}}\sum\limits_{K\in\mathcal{T}_{H}\setminus\mathcal{T}_{h}}E_y^2(y_{\mathcal{T}_{H}},K).
\end{eqnarray}
Here $\xi=\alpha,\ 0<\alpha<1$ and $\xi=\alpha-2\sigma,\ 1<\alpha<2.$
Further we can derive
\begin{eqnarray*}
\hspace{-0.8in}(1-2^{-\frac{\rho\xi}{d}})\sum\limits_{K\in\mathcal{T}_{H}\setminus\mathcal{T}_{h}}E_y^2(y_{\mathcal{T}_{H}},K)&\leq \sum\limits_{K\in\mathcal{T}_{H}\setminus\mathcal{T}_{h}}E_y^2(y_{\mathcal{T}_{H}},K)-\sum\limits_{K\in\mathcal{T}_{h}\setminus\mathcal{T}_{H}}E_y^2(y_{\mathcal{T}_{H}},K)\\
&=\sum\limits_{K\in\mathcal{T}_{H}}E_y^2(y_{\mathcal{T}_{H}},K)-\sum\limits_{K\in\mathcal{T}_{h}}E_y^2(y_{\mathcal{T}_{H}},K).
\end{eqnarray*}
Thus, it implies that
\begin{eqnarray}
\hspace{-0.8in}\sum\limits_{K\in\mathcal{T}_{H}\setminus\mathcal{T}_{h}}E_y^2(y_{\mathcal{T}_{H}},K)\leq \frac{1}{1-2^{-\frac{\rho\xi}{d}}}
\left(\sum\limits_{K\in\mathcal{T}_{H}}E_y^2(y_{\mathcal{T}_{H}},K)-\sum\limits_{K\in\mathcal{T}_{h}}E_y^2(y_{\mathcal{T}_{H}},K)\right).
\end{eqnarray}
Similar arguments can be applied to $\sum\limits_{K\in\mathcal{T}_{H}\setminus\mathcal{T}_{h}}E_p^2(p_{\mathcal{T}_{H}},K)$. Using the definition of (\ref{E}), we can thus arrive at the estimate
\begin{eqnarray}\label{remark}
\hspace{-0.8in}\mathcal{E}_{ocp}^2(y_{\mathcal{T}_H},p_{\mathcal{T}_H},\mathcal{T}_{H}\setminus\mathcal{T}_{h})\leq \frac{1}{1-2^{-\frac{\rho\xi}{d}}}
\left(\mathcal{E}_{ocp}^2(y_{\mathcal{T}_H},p_{\mathcal{T}_H},\mathcal{T}_{H})-\mathcal{E}_{ocp}^2(y_{\mathcal{T}_H},p_{\mathcal{T}_H},\mathcal{T}_{h})\right).
\end{eqnarray}
\end{remark}
\begin{lemma}\label{ythk-y}
Set $\breve{y}=\mathcal{S}_{\mathcal{T}_{h}}(f+u_{\mathcal{T}_{H}})$ and $\breve{p}=\mathcal{S}^*_{\mathcal{T}_{h}}(\mathcal{S}_{\mathcal{T}_{H}}(f+u_{\mathcal{T}_{H}})-y_d)$. Then the following estimates hold
  \begin{eqnarray*}
\|y_{\mathcal{T}_{H}}-\breve{y}\|_{\widetilde{H}^{\frac{\alpha}{2}}(\Omega)}^2&\leq& \mathbb{C}_{yaux} \sum\limits_{K\in\mathcal{T}_{H}\setminus\mathcal{T}_{h}}E_y^2(y_{\mathcal{T}_{H}},K),\\
\| p_{\mathcal{T}_{H}}-\breve{p} \|_{\widetilde{H}^{\frac{\alpha}{2}}(\Omega)}^2&\leq&  \mathbb{C}_{paux} \sum\limits_{K\in\mathcal{T}_{H}\setminus\mathcal{T}_{h}}E_p^2(p_{\mathcal{T}_{H}},K).
 \end{eqnarray*}
\end{lemma}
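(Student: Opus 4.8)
The plan is to establish both bounds by one discrete-reliability argument, using that in each case the coarse solution and its fine counterpart solve the discrete problem with the \emph{same} data on the nested spaces $\mathbb{V}_{\mathcal{T}_{H}}\subset\mathbb{V}_{\mathcal{T}_{h}}$ (recall that $\mathcal{T}_{h}$ is a conforming bisection refinement of $\mathcal{T}_{H}$). For the state, set $e_{y}:=\breve{y}-y_{\mathcal{T}_{H}}\in\mathbb{V}_{\mathcal{T}_{h}}$. Since $\|e_{y}\|^{2}_{\widetilde{H}^{\frac{\alpha}{2}}(\Omega)}=a(e_{y},e_{y})$ and $a(\breve{y},v_{\mathcal{T}_{h}})=(f+u_{\mathcal{T}_{H}},v_{\mathcal{T}_{h}})$ for every $v_{\mathcal{T}_{h}}\in\mathbb{V}_{\mathcal{T}_{h}}$ (as $\breve{y}=\mathcal{S}_{\mathcal{T}_{h}}(f+u_{\mathcal{T}_{H}})$), I would first write $a(e_{y},e_{y})=(f+u_{\mathcal{T}_{H}},e_{y})-a(y_{\mathcal{T}_{H}},e_{y})$ and then, using Galerkin orthogonality of $y_{\mathcal{T}_{H}}$ on the coarse mesh, replace $e_{y}$ by $e_{y}-\Pi_{\mathcal{T}_{H}}e_{y}$ with $\Pi_{\mathcal{T}_{H}}$ the Scott--Zhang operator, obtaining $a(e_{y},e_{y})=\langle f+u_{\mathcal{T}_{H}}-(-\Delta)^{\frac{\alpha}{2}}y_{\mathcal{T}_{H}},\,(1-\Pi_{\mathcal{T}_{H}})e_{y}\rangle$, the bracket being a duality pairing when $\alpha\in(\frac{3}{2},2)$ since then $(-\Delta)^{\frac{\alpha}{2}}y_{\mathcal{T}_{H}}\notin L^{2}(\Omega)$.

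The decisive point is the localization of $(1-\Pi_{\mathcal{T}_{H}})e_{y}$. On every unrefined element $K\in\mathcal{T}_{H}\cap\mathcal{T}_{h}$ one has $e_{y}|_{K}=\breve{y}|_{K}-y_{\mathcal{T}_{H}}|_{K}\in\mathbb{P}_{1}(K)$; choosing, for each vertex of $\mathcal{T}_{H}$ that belongs to some unrefined element, the Scott--Zhang averaging simplex to be such an unrefined element forces $\Pi_{\mathcal{T}_{H}}e_{y}$ to coincide with $e_{y}$ at all those vertices. Consequently $(1-\Pi_{\mathcal{T}_{H}})e_{y}$ is a $\mathbb{P}_{1}$-function vanishing at every vertex of each unrefined $K$, hence vanishes there, so it is supported in $\bigcup_{K\in\mathcal{T}_{H}\setminus\mathcal{T}_{h}}\overline{K}$. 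Splitting the pairing elementwise through the weight as $\sum_{K\in\mathcal{T}_{H}\setminus\mathcal{T}_{h}}\big(\widetilde{h}^{\frac{\alpha}{2}}_{K}(f+u_{\mathcal{T}_{H}}-(-\Delta)^{\frac{\alpha}{2}}y_{\mathcal{T}_{H}}),\,\widetilde{h}^{-\frac{\alpha}{2}}_{K}(1-\Pi_{\mathcal{T}_{H}})e_{y}\big)_{L^{2}(K)}$ (this is exactly why the \emph{weighted} residual is needed: for $\alpha\in(\frac{3}{2},2)$ the unweighted residual is not in $L^{2}(K)$), and applying Cauchy--Schwarz on each $K$, the discrete Cauchy--Schwarz inequality, the definition (\ref{Eydefin}) of $E_{y}$, and the weighted stability (\ref{sz3}) of $\Pi_{\mathcal{T}_{H}}$, I get
\[
a(e_{y},e_{y})\ \le\ \Big(\sum_{K\in\mathcal{T}_{H}\setminus\mathcal{T}_{h}}E_{y}^{2}(y_{\mathcal{T}_{H}},K)\Big)^{\frac12}\|\widetilde{h}^{-\frac{\alpha}{2}}_{\mathcal{T}_{H}}(1-\Pi_{\mathcal{T}_{H}})e_{y}\|\ \le\ \mathbb{C}_{sz}\Big(\sum_{K\in\mathcal{T}_{H}\setminus\mathcal{T}_{h}}E_{y}^{2}(y_{\mathcal{T}_{H}},K)\Big)^{\frac12}\|e_{y}\|_{\widetilde{H}^{\frac{\alpha}{2}}(\Omega)}.
\]
Dividing by $\|e_{y}\|_{\widetilde{H}^{\frac{\alpha}{2}}(\Omega)}$ yields the first estimate with $\mathbb{C}_{yaux}=\mathbb{C}_{sz}^{2}$.

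For the adjoint, the observation is that $\breve{p}=\mathcal{S}^{*}_{\mathcal{T}_{h}}(\mathcal{S}_{\mathcal{T}_{H}}(f+u_{\mathcal{T}_{H}})-y_{d})=\mathcal{S}^{*}_{\mathcal{T}_{h}}(y_{\mathcal{T}_{H}}-y_{d})$ and $p_{\mathcal{T}_{H}}=\mathcal{S}^{*}_{\mathcal{T}_{H}}(y_{\mathcal{T}_{H}}-y_{d})$ solve the discrete adjoint equation on $\mathcal{T}_{h}$ and $\mathcal{T}_{H}$ with the \emph{same} right-hand side $y_{\mathcal{T}_{H}}-y_{d}$; since $a(\cdot,\cdot)$ is symmetric, repeating the argument verbatim for $e_{p}:=\breve{p}-p_{\mathcal{T}_{H}}$, with $E_{p}$ and (\ref{Epdefin}) in place of $E_{y}$ and (\ref{Eydefin}), gives the second estimate with $\mathbb{C}_{paux}=\mathbb{C}_{sz}^{2}$. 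I expect the localization step — certifying that $(1-\Pi_{\mathcal{T}_{H}})e_{y}$ is exactly supported on the refined elements, which relies on the admissible freedom in the choice of Scott--Zhang averaging simplices — to be the only genuinely delicate point, together with the careful handling, via the weight $\widetilde{h}^{\frac{\alpha}{2}}_{K}$ and property (\ref{sz3}), of the fact that for $\alpha\in(\frac{3}{2},2)$ the elementwise residual fails to lie in $L^{2}(K)$.
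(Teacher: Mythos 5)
Your proposal is correct and follows essentially the same route as the paper: coercivity plus Galerkin orthogonality between the nested discrete problems with identical data, insertion of the Scott--Zhang quasi-interpolant, a weighted elementwise Cauchy--Schwarz argument with the weight $\widetilde{h}^{\frac{\alpha}{2}}_K$, and the stability property (\ref{sz3}), then the verbatim repetition for the adjoint with right-hand side $y_{\mathcal{T}_H}-y_d$. The only difference is that you spell out the localization of $(1-\Pi_{\mathcal{T}_H})(\breve{y}-y_{\mathcal{T}_H})$ to the refined elements via the choice of averaging simplices, a step the paper's proof uses implicitly when it drops the contribution on the unrefined region $\omega$.
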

\begin{proof}
 By the coercivity and Galerkin orthogonality we derive
   \begin{eqnarray*}
\|y_{\mathcal{T}_{H}}-\breve{y}\|_{\widetilde{H}^{\frac{\alpha}{2}}(\Omega)}^2
&\leq& a(\breve{y}-{y}_{\mathcal{T}_H},\breve{y}-{y}_{\mathcal{T}_H})\\
&\leq& a(\breve{y}-{y}_{\mathcal{T}_H},(1-\Pi_{\mathcal{T}_H})(\breve{y}-{y}_{\mathcal{T}_H})\\
&=& (f+u_{\mathcal{T}_H}-(-\Delta)^{\frac{\alpha}{2}}{y}_{\mathcal{T}_H},(1-\Pi_{\mathcal{T}_H})(\breve{y}-{y}_{\mathcal{T}_H})).
 \end{eqnarray*}
Assume $\omega:=\mathrm{interior}(\bigcup\limits_{{K}\in  \mathcal{T}_H\cap  \mathcal{T}_h }\bar{K}).$ We obtained by applying (\ref{sz1}) and (\ref{sz3}) in the above estimation
    \begin{align*}
\hspace{-1in}\|y_{\mathcal{T}_{H}}-\breve{y}\|_{\widetilde{H}^{\frac{\alpha}{2}}(\Omega)}^2
&\leq C\|\widetilde{h}^{\frac{\alpha}{2}}_{\mathcal{T}_H}(f+u_{\mathcal{T}_H}-(-\Delta)^{\frac{\alpha}{2}}{y}_{\mathcal{T}_H})\|_{L^2(\Omega\setminus\omega)}\|\widetilde{h}^{-{\frac{\alpha}{2}}}_{\mathcal{T}_H}(1-\Pi_{\mathcal{T}_H})(\breve{y}-{y}_{\mathcal{T}_H})\|_{L^2(\Omega\setminus\omega)}\\
&\quad+ C\|\widetilde{h}^{\frac{\alpha}{2}}_{\mathcal{T}_H}(f+u_{\mathcal{T}_H}-(-\Delta)^{\frac{\alpha}{2}}{y}_{\mathcal{T}_H})\|_{L^2(\omega)}\|\widetilde{h}^{-{\frac{\alpha}{2}}}_{\mathcal{T}_H}(1-\Pi_{\mathcal{T}_H})(\breve{y}-{y}_{\mathcal{T}_H})\|_{L^2(\omega)} \\
&\leq   C\|\widetilde{h}^{\frac{\alpha}{2}}_{\mathcal{T}_H}(f+u_{\mathcal{T}_H}-(-\Delta)^{\frac{\alpha}{2}}{y}_{\mathcal{T}_H})\|_{L^2(\Omega\setminus\omega)}\|\widetilde{h}^{-{\frac{\alpha}{2}}}_{\mathcal{T}_H}(1-\Pi_{\mathcal{T}_H})(\breve{y}-{y}_{\mathcal{T}_H})\|_{L^2(\Omega\setminus\omega)}\\
&\leq \left(\mathbb{C}_{yaux} \sum\limits_{K\in\mathcal{T}_{H}\setminus\mathcal{T}_{h}}E_y^2(y_{\mathcal{T}_{H}},K) \right)^{\frac{1}{2}} \|\widetilde{h}^{-\frac{\alpha}{2}}_{\mathcal{T}_{H}}(1-\Pi_{\mathcal{T}_H})(\breve{y}-{y}_{\mathcal{T}_H})\|_{L^2(\Omega\setminus \omega)}\\
&\leq \left(\mathbb{C}_{yaux} \sum\limits_{K\in\mathcal{T}_{H}\setminus\mathcal{T}_{h}}E_y^2(y_{\mathcal{T}_{H}},K) \right)^{\frac{1}{2}} \|\breve{y}-{y}_{\mathcal{T}_H}\|_{\widetilde{H}^{\frac{\alpha}{2}}(\Omega)},
 \end{align*}
which yields the first result. The second result can be derived in an analogous way.
\end{proof}

\begin{theorem}(Discrete\ reliability)
We use $\mathcal{T}_H$ to denote the refinements of $\mathcal{T}_h$. There holds
\begin{eqnarray*}
\quad\|y_{\mathcal{T}_{H}}-y_{\mathcal{T}_{h}}\|^2_{\widetilde{H}^{\frac{\alpha}{2}}(\Omega)}
+\|p_{\mathcal{T}_{H}}-p_{\mathcal{T}_{h}}\|^2_{\widetilde{H}^{\frac{\alpha}{2}}(\Omega)}\leq\mathcal{E}_{ocp}^2(y_{\mathcal{T}_H},p_{\mathcal{T}_H},\mathcal{T}_{H}\setminus\mathcal{T}_{h}).
\end{eqnarray*}
\end{theorem}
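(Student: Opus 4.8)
The plan is to follow the line of argument in the proof of Theorem~\ref{Reliability}, but with the exact optimal pair replaced by the coarse discrete pair $(y_{\mathcal{T}_h},p_{\mathcal{T}_h})$ and with the global reliability estimate replaced by the localized bound of Lemma~\ref{ythk-y}, which automatically confines every residual contribution to $\mathcal{T}_H\setminus\mathcal{T}_h$. Besides the auxiliary functions $\breve y=\mathcal{S}_{\mathcal{T}_h}(f+u_{\mathcal{T}_H})$ and $\breve p=\mathcal{S}^*_{\mathcal{T}_h}(y_{\mathcal{T}_H}-y_d)$ from Lemma~\ref{ythk-y} (the coarse state and adjoint driven by the fine control), I would introduce the companion coarse adjoint $p^{\sharp}_{\mathcal{T}_h}=\mathcal{S}^*_{\mathcal{T}_h}(\breve y-y_d)$. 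Lemma~\ref{ythk-y} bounds $\|y_{\mathcal{T}_H}-\breve y\|_{\widetilde{H}^{\frac{\alpha}{2}}(\Omega)}^2$ and $\|p_{\mathcal{T}_H}-\breve p\|_{\widetilde{H}^{\frac{\alpha}{2}}(\Omega)}^2$ by $\sum_{K\in\mathcal{T}_H\setminus\mathcal{T}_h}E_y^2(y_{\mathcal{T}_H},K)$ and $\sum_{K\in\mathcal{T}_H\setminus\mathcal{T}_h}E_p^2(p_{\mathcal{T}_H},K)$ respectively; moreover, since $y_{\mathcal{T}_H}-\breve y$ and $p_{\mathcal{T}_H}-\breve p$ are differences of the fine and coarse Galerkin solutions for one and the same right-hand side, the Aubin--Nitsche duality behind the second estimate of Lemma~\ref{etah} upgrades these to $L^2$ estimates carrying an additional factor $\Theta(h)$.

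The next and central step is to estimate $\|u_{\mathcal{T}_H}-u_{\mathcal{T}_h}\|$. I would test the discrete variational inequality of (\ref{lisan}) on $\mathcal{T}_H$ with $v=u_{\mathcal{T}_h}$ and the one on $\mathcal{T}_h$ with $v=u_{\mathcal{T}_H}$, add them, drop the subgradient term using the monotonicity (\ref{subg}) of $\partial j_2$, and then insert $p^{\sharp}_{\mathcal{T}_h}$: the crossed term satisfies $(p^{\sharp}_{\mathcal{T}_h}-p_{\mathcal{T}_h},u_{\mathcal{T}_H}-u_{\mathcal{T}_h})=\|\mathcal{S}_{\mathcal{T}_h}(u_{\mathcal{T}_H}-u_{\mathcal{T}_h})\|^2\ge 0$ and therefore has the right sign, exactly as the term $\|y_{\mathcal{T}}-y_{\mathcal{T}_h}\|^2$ in Steps~3--4 of Theorem~\ref{Reliability}. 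This yields $\gamma\|u_{\mathcal{T}_H}-u_{\mathcal{T}_h}\|\le\|p_{\mathcal{T}_H}-p^{\sharp}_{\mathcal{T}_h}\|$. Decomposing $p_{\mathcal{T}_H}-p^{\sharp}_{\mathcal{T}_h}=(p_{\mathcal{T}_H}-\breve p)+\mathcal{S}^*_{\mathcal{T}_h}(y_{\mathcal{T}_H}-\breve y)$ and using the $L^2$-boundedness of $\mathcal{S}^*_{\mathcal{T}_h}$ together with the two $\Theta(h)$-improved $L^2$ bounds from the first step, the whole right-hand side is $O(\Theta(h))$ times the localized estimator. Thus $\|u_{\mathcal{T}_H}-u_{\mathcal{T}_h}\|$ is controlled by $\mathcal{E}_{ocp}(y_{\mathcal{T}_H},p_{\mathcal{T}_H},\mathcal{T}_H\setminus\mathcal{T}_h)$ \emph{without producing any feedback loop} between the state and adjoint differences.

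It then remains to assemble the $\widetilde{H}^{\frac{\alpha}{2}}$-errors. From $y_{\mathcal{T}_H}-y_{\mathcal{T}_h}=(y_{\mathcal{T}_H}-\breve y)+\mathcal{S}_{\mathcal{T}_h}(u_{\mathcal{T}_H}-u_{\mathcal{T}_h})$, the coercivity of $a(\cdot,\cdot)$, Lemma~\ref{ythk-y}, and the control bound just obtained give $\|y_{\mathcal{T}_H}-y_{\mathcal{T}_h}\|_{\widetilde{H}^{\frac{\alpha}{2}}(\Omega)}^2\le C\sum_{K\in\mathcal{T}_H\setminus\mathcal{T}_h}\big(E_y^2(y_{\mathcal{T}_H},K)+E_p^2(p_{\mathcal{T}_H},K)\big)$. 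Then $p_{\mathcal{T}_H}-p_{\mathcal{T}_h}=(p_{\mathcal{T}_H}-\breve p)+\mathcal{S}^*_{\mathcal{T}_h}(y_{\mathcal{T}_H}-y_{\mathcal{T}_h})$, together with $\|\mathcal{S}^*_{\mathcal{T}_h}(y_{\mathcal{T}_H}-y_{\mathcal{T}_h})\|_{\widetilde{H}^{\frac{\alpha}{2}}(\Omega)}\le C\|y_{\mathcal{T}_H}-y_{\mathcal{T}_h}\|_{L^2}$ and the observation that this $L^2$-norm is again $O(\Theta(h))$ times the localized estimator (its two summands being the $L^2$-small difference $y_{\mathcal{T}_H}-\breve y$ and $\mathcal{S}_{\mathcal{T}_h}$ of the already-small control difference), gives the analogous bound for $p_{\mathcal{T}_H}-p_{\mathcal{T}_h}$. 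Adding the two, and choosing $C_{st}$ and $C_{ad}$ in (\ref{E}) large enough to dominate $\mathbb{C}_{yaux}$, $\mathbb{C}_{paux}$ and the generic constants --- legitimate once $h<h_0$ with $\Theta(h_0)$ small in terms of $\gamma$ and $\beta$ --- one arrives at the asserted inequality $\|y_{\mathcal{T}_H}-y_{\mathcal{T}_h}\|^2_{\widetilde{H}^{\frac{\alpha}{2}}(\Omega)}+\|p_{\mathcal{T}_H}-p_{\mathcal{T}_h}\|^2_{\widetilde{H}^{\frac{\alpha}{2}}(\Omega)}\le\mathcal{E}_{ocp}^2(y_{\mathcal{T}_H},p_{\mathcal{T}_H},\mathcal{T}_H\setminus\mathcal{T}_h)$.

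I expect the main obstacle to be exactly the avoidance of a vicious circle between the state and the adjoint errors: a naive use of the Lipschitz projection formulas for $u_{\mathcal{T}_h},u_{\mathcal{T}_H}$ would give $\|u_{\mathcal{T}_H}-u_{\mathcal{T}_h}\|\le C\gamma^{-1}\|p_{\mathcal{T}_H}-p_{\mathcal{T}_h}\|_{\widetilde{H}^{\frac{\alpha}{2}}(\Omega)}$ with \emph{no} small factor, and then the state and adjoint estimates could not be closed for general $\gamma$. The way out, as sketched above, is the variational-inequality test combined with the sign-definite term built from $p^{\sharp}_{\mathcal{T}_h}$, plus the Aubin--Nitsche estimate --- applicable precisely because all the relevant quantities are fine-versus-coarse Galerkin differences with matching data --- which supplies the $\Theta(h)$ that makes every remaining coupling absorbable. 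A secondary, structural subtlety is that the nonlocality of $(-\Delta)^{\frac{\alpha}{2}}$ keeps the residuals from vanishing on the unrefined elements, so the fact that only the indicators over $\mathcal{T}_H\setminus\mathcal{T}_h$ appear on the right-hand side is itself a nontrivial consequence of the Scott--Zhang construction and its weighted stability (\ref{sz3}) exploited in Lemma~\ref{ythk-y}.
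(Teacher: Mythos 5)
Your proof is correct, and its overall skeleton is the paper's: both arguments start from the auxiliary pair $\breve y=\mathcal{S}_{\mathcal{T}_h}(f+u_{\mathcal{T}_H})$, $\breve p=\mathcal{S}^*_{\mathcal{T}_h}(y_{\mathcal{T}_H}-y_d)$ of Lemma \ref{ythk-y}, split $y_{\mathcal{T}_H}-y_{\mathcal{T}_h}$ and $p_{\mathcal{T}_H}-p_{\mathcal{T}_h}$ through these functions, and reduce the claim to the localized indicators plus a bound for $\|u_{\mathcal{T}_H}-u_{\mathcal{T}_h}\|$. The genuine difference is how that control term is treated. The paper cites (\ref{u-uTh}) with the fine-mesh quadruple playing the role of the exact solution, which leaves $\frac{C}{\gamma^2}\Theta^2(h)\bigl(\|y_{\mathcal{T}_H}-y_{\mathcal{T}_h}\|^2_{\widetilde{H}^{\frac{\alpha}{2}}(\Omega)}+\|p_{\mathcal{T}_H}-p_{\mathcal{T}_h}\|^2_{\widetilde{H}^{\frac{\alpha}{2}}(\Omega)}\bigr)$ on the right-hand side and closes the estimate by absorbing these feedback terms for $h<h_0$. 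You instead re-run, entirely at the discrete level, the mechanism of Steps 3--4 of Theorem \ref{Reliability}: test the two variational inequalities in (\ref{lisan}) against each other, discard the subgradient contribution by the monotonicity (\ref{subg}), make the cross term sign-definite via $p^{\sharp}_{\mathcal{T}_h}=\mathcal{S}^*_{\mathcal{T}_h}(\breve y-y_d)$, and upgrade $\|y_{\mathcal{T}_H}-\breve y\|$ and $\|p_{\mathcal{T}_H}-\breve p\|$ to $L^2$ by duality, so that $\gamma\|u_{\mathcal{T}_H}-u_{\mathcal{T}_h}\|\le\|p_{\mathcal{T}_H}-p^{\sharp}_{\mathcal{T}_h}\|$ is bounded directly by the localized estimator, with no feedback loop and no absorption. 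Your route buys a self-contained justification of precisely the point the paper glosses over: (\ref{u-uTh}) was derived for the continuous-versus-discrete pair, and reinterpreting it for two nested discrete solutions really amounts to the discrete re-derivation you carry out; the paper's route buys brevity by reusing the earlier estimate verbatim. Two minor caveats in your write-up, neither a gap: the $L^2$-upgrade is not literally Lemma \ref{etah} (stated for $\mathcal{S}$ versus $\mathcal{S}_h$) but its nested-mesh analogue, obtained by testing with the coarse Galerkin approximation of the dual solution and using $\mathbb{V}_{\mathcal{T}_H}\subset\mathbb{V}_{\mathcal{T}_h}$ --- the same fact the paper uses silently in the quasi-orthogonality proof --- and the resulting factor is $\Theta(H)$ rather than $\Theta(h)$, which is harmless since both are at most $\Theta(h_0)$; and your final absorption of generic constants into $C_{st}$, $C_{ad}$ of (\ref{E}) is the same implicit convention the paper itself adopts in stating the bound without an explicit constant.
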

\begin{proof}
Taking $y_{\mathcal{T}_h}$ and $u_{\mathcal{T}_h}$ as the continuous solutions and $y_{\mathcal{T}_H}$ and $u_{\mathcal{T}_H}$ as its approximation, respectively. It can be deduced from
 the coercivity of $a(\cdot,\cdot)$, Galerkin orthogonality and Lemma \ref{ythk-y} that
	\begin{align*}
  \|y_{\mathcal{T}_H}-y_{\mathcal{T}_h}\|_{\widetilde{H}^{\frac{\alpha}{2}}(\Omega)}^2&\leq \|  {y}_{\mathcal{T}_H}-\breve{y}\|_{\widetilde{H}^{\frac{\alpha}{2}}(\Omega)}^2+ \| \breve{y}-{y}_{\mathcal{T}_h}\|_{\widetilde{H}^{\frac{\alpha}{2}}(\Omega)}^2\\
                  &\leq \mathbb{C}_{yaux} \sum\limits_{K\in\mathcal{T}_{H}\setminus\mathcal{T}_{h}}E_y^2(y_{\mathcal{T}_{H}},K)+\|u_{\mathcal{T}_H}-u_{\mathcal{T}_h}\|^2.
\end{align*}
By (\ref{u-uTh}), we can obtain
	\begin{align*}
\|y_{\mathcal{T}_H}-y_{\mathcal{T}_h}\|_{\widetilde{H}^{\frac{\alpha}{2}}(\Omega)}^2 &\leq \mathbb{C}_{yaux} \sum\limits_{K\in\mathcal{T}_{H}\setminus\mathcal{T}_{h}}E_y^2(y_{\mathcal{T}_{H}},K)+
\frac{C}{\gamma^2} \Theta^2(h)\left( \|p_{\mathcal{T}_H}-p_{\mathcal{T}_h}\|^2_{\widetilde{H}^{\frac{\alpha}{2}}(\Omega)} + \| y_{\mathcal{T}_H}-y_{\mathcal{T}_h}\|^2_{\widetilde{H}^{\frac{\alpha}{2}}(\Omega)} \right).
	\end{align*}

Similar arguments can be applied to bound $\|p_{\mathcal{T}_H}-p_{\mathcal{T}_h}\|_{\widetilde{H}^{\frac{\alpha}{2}}(\Omega)}^2$. Using the previous inequality, we can thus arrive at the estimate
	\begin{align*}
\|p_{\mathcal{T}_H}-p_{\mathcal{T}_h}\|_{\widetilde{H}^{\frac{\alpha}{2}}(\Omega)}^2 &\leq \mathbb{C}_{paux} \sum\limits_{K\in\mathcal{T}_{H}\setminus\mathcal{T}_{h}}E_p^2(p_{\mathcal{T}_{H}},K)+
\mathbb{C}_{yaux} \sum\limits_{K\in\mathcal{T}_{H}\setminus\mathcal{T}_{h}}E_y^2(y_{\mathcal{T}_{H}},K)\\
&\quad+
\frac{C}{\gamma^2} \Theta^2(h)\left( \|p_{\mathcal{T}_H}-p_{\mathcal{T}_h}\|^2_{\widetilde{H}^{\frac{\alpha}{2}}(\Omega)} + \| y_{\mathcal{T}_H}-y_{\mathcal{T}_h}\|^2_{\widetilde{H}^{\frac{\alpha}{2}}(\Omega)} \right).
	\end{align*}

%We control the terms $\|\lambda_{\mathcal{T}_{H}}-\lambda_{\mathcal{T}_{h}}\|^2.$ Invoking the estimate (\ref{lamda-lamdath}), that
%	\begin{eqnarray*}
%\|\lambda_{\mathcal{T}_{H}}-\lambda_{\mathcal{T}_{h}}\|^2\leq  C\Theta^2(h)( \|p_{\mathcal{T}_h}-p_{\mathcal{T}_H}\|^2_{\widetilde{H}^{\frac{\alpha}{2}}(\Omega)} + \| y_{\mathcal{T}_h}-y_{\mathcal{T}_H}\|^2_{\widetilde{H}^{\frac{\alpha}{2}}(\Omega)} ).
%	\end{eqnarray*}

%Viewing $u_{\mathcal{T}_H}$ as the continuous solution and $u_{\mathcal{T}_h}$
%as its approximation it follows from (\ref{u-uTh}) that
%	\begin{eqnarray*}
%\|u_{\mathcal{T}_{H}}-u_{\mathcal{T}_{h}}\|^2\leq  C\Theta^2(h)( \|p_{\mathcal{T}_h}-p_{\mathcal{T}_H}\|^2_{\widetilde{H}^{\frac{\alpha}{2}}(\Omega)} + \| y_{\mathcal{T}_h}-y_{\mathcal{T}_H}\|^2_{\widetilde{H}^{\frac{\alpha}{2}}(\Omega)} ).
%	\end{eqnarray*}
Combining the above estimates yields
	\begin{align*}
\|y_{\mathcal{T}_{H}}-y_{\mathcal{T}_{h}}\|^2_{\widetilde{H}^{\frac{\alpha}{2}}(\Omega)}
+\|p_{\mathcal{T}_{H}}-p_{\mathcal{T}_{h}}\|^2_{\widetilde{H}^{\frac{\alpha}{2}}(\Omega)}&\leq \mathbb{C}_{yaux} \sum\limits_{K\in\mathcal{T}_{H}\setminus\mathcal{T}_{h}}E_y^2(y_{\mathcal{T}_{H}},K)
+\mathbb{C}_{paux} \sum\limits_{K\in\mathcal{T}_{H}\setminus\mathcal{T}_{h}}E_p^2(p_{\mathcal{T}_{H}},K)
\\
&\leq \mathcal{E}_{ocp}^2(y_{\mathcal{T}_H},p_{\mathcal{T}_H},\mathcal{T}_{H}\setminus\mathcal{T}_{h}).
\end{align*}
\end{proof}
The optimal control system is a coupled system with nonlinear characteristics. These nonlinear characteristics lead to a lack of support for orthogonality when attempting to prove a contraction. Therefore, we need to prove quasi-orthogonality next.

Let $(y_{\mathcal{T}_{h_{k}}},p_{\mathcal{T}_{h_{k}}})$ be the solution associated to the discrete problem (\ref{lisanstate}) with respect to ${\mathcal{T}_{h_{k}}}$ and $(y_{\mathcal{T}_{h_{k+1}}},p_{\mathcal{T}_{h_{k+1}}})$ be the solution associated to the discrete problem (\ref{lisanstate}) with respect to ${\mathcal{T}_{h_{k+1}}}$. We assume ${\mathcal{T}_{h_{k+1}}}$ is a refinement of ${\mathcal{T}_{h_{k}}}$, and define the following norm
	\begin{eqnarray}
\|e_{\mathcal{T}_{h_{k}}}\|_{\Omega}^2:=\|y-y_{\mathcal{T}_{h_{k}}}\|^2_{\widetilde{H}^{\frac{\alpha}{2}}(\Omega)}
+\|p-p_{\mathcal{T}_{h_{k}}}\|^2_{\widetilde{H}^{\frac{\alpha}{2}}(\Omega)}
	\end{eqnarray}
and
\begin{eqnarray}
\|r_{\mathcal{T}_{h_{k}}}\|_{\Omega}^2:=\|y_{\mathcal{T}_{h_{k}}}-y_{\mathcal{T}_{h_{k+1}}}\|^2_{\widetilde{H}^{\frac{\alpha}{2}}(\Omega)}
+\|p_{\mathcal{T}_{h_{k}}}-p_{\mathcal{T}_{h_{k+1}}}\|^2_{\widetilde{H}^{\frac{\alpha}{2}}(\Omega)},
	\end{eqnarray}
where $(y,p)$ is the optimal solution of the problem (\ref{weak_object})-(\ref{weak_state}). Then the following relation is satisfied
\begin{eqnarray}\label{eth-etH+r}
\|e_{\mathcal{T}_{h_{k+1}}}\|_{\Omega}^2=\|e_{\mathcal{T}_{h_{k}}}\|_{\Omega}^2-\|r_{\mathcal{T}_{h_{k}}}\|_{\Omega}^2+2a(y-y_{\mathcal{T}_{h_{k+1}}},y_{\mathcal{T}_{h_{k}}}-y_{\mathcal{T}_{h_{k+1}}})
+2a(p-p_{\mathcal{T}_{h_{k+1}}},p_{\mathcal{T}_{h_{k}}}-p_{\mathcal{T}_{h_{k+1}}}).
	\end{eqnarray}
%It follows from (\ref{u-uTh}) and (\ref{lamda-lamdath}) that both $u$ and  $\lambda$ are controlled by the state variable $y$ and adjoint variable $p$, so we reduce the notation mentioned above to
%\begin{eqnarray}
%\|e_{k}\|_{\Omega}^2\approx\|y-y_{\mathcal{T}_{h_{k}}}\|^2_{\widetilde{H}^{\frac{\alpha}{2}}(\Omega)}
%+\|p-p_{\mathcal{T}_{h_{k}}}\|^2_{\widetilde{H}^{\frac{\alpha}{2}}(\Omega)},
%	\end{eqnarray}
%\begin{eqnarray}
%\|r_{k}\|_{\Omega}^2\approx\|y_{\mathcal{T}_{h_{k}}}-y_{\mathcal{T}_{h_{k+1}}}\|^2_{\widetilde{H}^{\frac{\alpha}{2}}(\Omega)}
%+\|p_{\mathcal{T}_{h_{k}}}-p_{\mathcal{T}_{h_{k+1}}}\|^2_{\widetilde{H}^{\frac{\alpha}{2}}(\Omega)}
%	\end{eqnarray}
%and
%\begin{eqnarray}\label{eth-etH+r}
%\|e_{k+1}\|_{\Omega}^2&\approx\|e_{k}\|_{\Omega}^2-\|r_{k}\|_{\Omega}^2+2a(y-y_{\mathcal{T}_{h_{k+1}}},y_{\mathcal{T}_{h_{k}}}-y_{\mathcal{T}_{h_{k+1}}})+2a(p-p_{\mathcal{T}_{h_{k+1}}},p_{\mathcal{T}_{h_{k}}}-p_{\mathcal{T}_{h_{k+1}}}).
%	\end{eqnarray}
\begin{theorem}(Quasi-orthogonality) By the above definitions, there holds
\begin{eqnarray*}
\sum\limits_{k=l}^{N}\left\{\|r_{k}\|^2_{\Omega}-2C\Theta^2(h_0)\left(\|y-y_{\mathcal{T}_{h_{k}}}\|_{\widetilde{H}^{\frac{\alpha}{2}}(\Omega)}^2+\|p-p_{\mathcal{T}_{h_{k}}}\|_{\widetilde{H}^{\frac{\alpha}{2}}(\Omega)}^2\right)\right\}
\leq C_{orth}\mathcal{E}_{ocp}^2(y_{\mathcal{T}_{h_{l}}},p_{\mathcal{T}_{h_{l}}},\mathcal{T}_{h_{l}}).
\end{eqnarray*}
Here, $h_0\ll 1$, for all $l, N\in  \mathbb{N}_{0},$ the constant $C_{orth}>0$ is depend
on $\Omega, d, \alpha,$ and the $\gamma$-shape regularity of the initial triangulation ${\mathcal{T}_{h_0}}$.
\end{theorem}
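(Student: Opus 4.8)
The plan is to start from the algebraic identity (\ref{eth-etH+r}) --- which is pure polarization and uses no orthogonality --- rewritten as
\[
\|r_{\mathcal{T}_{h_{k}}}\|_{\Omega}^2 = \|e_{\mathcal{T}_{h_{k}}}\|_{\Omega}^2 - \|e_{\mathcal{T}_{h_{k+1}}}\|_{\Omega}^2 + 2a(y-y_{\mathcal{T}_{h_{k+1}}},y_{\mathcal{T}_{h_{k}}}-y_{\mathcal{T}_{h_{k+1}}}) + 2a(p-p_{\mathcal{T}_{h_{k+1}}},p_{\mathcal{T}_{h_{k}}}-p_{\mathcal{T}_{h_{k+1}}}),
\]
and to show that the two cross terms are higher order, namely $O(\Theta^2(h_0))$ times the natural error quantities. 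Once this is done, summing over $k=l,\dots,N$ telescopes the energy terms and leaves $\|e_{\mathcal{T}_{h_{l}}}\|_{\Omega}^2 - \|e_{\mathcal{T}_{h_{N+1}}}\|_{\Omega}^2 \le \|e_{\mathcal{T}_{h_{l}}}\|_{\Omega}^2 \le \|\mathbf{e}\|_{\Omega}^2$, which by the reliability estimate of Theorem \ref{Reliability} is bounded by $\mathcal{E}_{ocp}^2(y_{\mathcal{T}_{h_{l}}},p_{\mathcal{T}_{h_{l}}},\mathcal{T}_{h_{l}})$.

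First I would rewrite the cross terms using Galerkin orthogonality. Since $\mathcal{T}_{h_{k+1}}$ is a refinement of $\mathcal{T}_{h_{k}}$, we have $\mathbb{V}_{\mathcal{T}_{h_{k}}}\subset\mathbb{V}_{\mathcal{T}_{h_{k+1}}}$, so $y_{\mathcal{T}_{h_{k}}}-y_{\mathcal{T}_{h_{k+1}}}$ and $p_{\mathcal{T}_{h_{k}}}-p_{\mathcal{T}_{h_{k+1}}}$ are admissible discrete test functions on $\mathcal{T}_{h_{k+1}}$. Using the continuous and discrete state equations, the symmetry of $a(\cdot,\cdot)$, and the continuous and discrete adjoint equations, one obtains
\[
a(y-y_{\mathcal{T}_{h_{k+1}}},y_{\mathcal{T}_{h_{k}}}-y_{\mathcal{T}_{h_{k+1}}}) = (u-u_{\mathcal{T}_{h_{k+1}}},\,y_{\mathcal{T}_{h_{k}}}-y_{\mathcal{T}_{h_{k+1}}}),
\]
\[
a(p-p_{\mathcal{T}_{h_{k+1}}},p_{\mathcal{T}_{h_{k}}}-p_{\mathcal{T}_{h_{k+1}}}) = (y-y_{\mathcal{T}_{h_{k+1}}},\,p_{\mathcal{T}_{h_{k}}}-p_{\mathcal{T}_{h_{k+1}}}).
\]
This is exactly where the nonlinear coupling of the optimality system enters: for a decoupled linear problem these right-hand sides would vanish by Galerkin orthogonality, whereas here they are controlled by the data mismatch $u-u_{\mathcal{T}_{h_{k+1}}}$ and the state error $y-y_{\mathcal{T}_{h_{k+1}}}$.

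Next I would establish the $L^2$ superconvergence of the control, state, and adjoint errors on every mesh $\mathcal{T}_{h_j}$ in the sequence: combining (\ref{u-uTh}), the duality estimate (\ref{yythor}) for $\|y-y_{\mathcal{T}_{h_j}}\|$, and the analogous duality argument (as in Step~4 of Theorem~\ref{Reliability}) for $\|p-p_{\mathcal{T}_{h_j}}\|$ --- all of which rest on Lemma~\ref{etah} --- together with $\Theta(h_j)\le\Theta(h_0)$ (as $\Theta$ is nonincreasing under refinement), one gets
\[
\|u-u_{\mathcal{T}_{h_j}}\| + \|y-y_{\mathcal{T}_{h_j}}\| + \|p-p_{\mathcal{T}_{h_j}}\| \le C\,\Theta(h_0)\big(\|y-y_{\mathcal{T}_{h_j}}\|_{\widetilde{H}^{\frac{\alpha}{2}}(\Omega)} + \|p-p_{\mathcal{T}_{h_j}}\|_{\widetilde{H}^{\frac{\alpha}{2}}(\Omega)}\big).
\]
I then apply Cauchy--Schwarz to the two rewritten cross terms, bound $\|y_{\mathcal{T}_{h_{k}}}-y_{\mathcal{T}_{h_{k+1}}}\|\le\|y-y_{\mathcal{T}_{h_{k}}}\|+\|y-y_{\mathcal{T}_{h_{k+1}}}\|$ (and likewise for $p$), insert the displayed superconvergence bound so that \emph{both} factors of each cross term carry a $\Theta(h_0)$, and use the triangle inequality in $\widetilde{H}^{\frac{\alpha}{2}}(\Omega)$ together with $\|y_{\mathcal{T}_{h_{k}}}-y_{\mathcal{T}_{h_{k+1}}}\|_{\widetilde{H}^{\frac{\alpha}{2}}(\Omega)}^2 + \|p_{\mathcal{T}_{h_{k}}}-p_{\mathcal{T}_{h_{k+1}}}\|_{\widetilde{H}^{\frac{\alpha}{2}}(\Omega)}^2 = \|r_{\mathcal{T}_{h_{k}}}\|_{\Omega}^2$ to replace level-$(k+1)$ errors by level-$k$ errors; Young's inequality then gives, for each $k$,
\[
2a(y-y_{\mathcal{T}_{h_{k+1}}},y_{\mathcal{T}_{h_{k}}}-y_{\mathcal{T}_{h_{k+1}}}) + 2a(p-p_{\mathcal{T}_{h_{k+1}}},p_{\mathcal{T}_{h_{k}}}-p_{\mathcal{T}_{h_{k+1}}}) \le C\Theta^2(h_0)\|r_{\mathcal{T}_{h_{k}}}\|_{\Omega}^2 + 2C\Theta^2(h_0)\big(\|y-y_{\mathcal{T}_{h_{k}}}\|_{\widetilde{H}^{\frac{\alpha}{2}}(\Omega)}^2 + \|p-p_{\mathcal{T}_{h_{k}}}\|_{\widetilde{H}^{\frac{\alpha}{2}}(\Omega)}^2\big).
\]
Inserting this into the telescoped identity and summing over $k=l,\dots,N$, the term $C\Theta^2(h_0)\sum_k\|r_{\mathcal{T}_{h_{k}}}\|_{\Omega}^2$ is absorbed on the left because $h_0\ll1$ makes $1-C\Theta^2(h_0)\ge\tfrac12$; dividing by $1-C\Theta^2(h_0)$ and relabelling the generic constants yields the claim with $C_{orth}$ depending only on $\Omega$, $d$, $\alpha$, and the shape regularity of $\mathcal{T}_{h_0}$.

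The main obstacle is precisely the cross-term estimate in the last two steps. Because the optimality system is nonlinearly coupled there is no exact Galerkin orthogonality, and the defect must be shown to be genuinely higher order. The crucial point --- and the reason the stated quasi-orthogonality holds with a $\Theta^2(h_0)$-weighted error term rather than with a fixed constant --- is that \emph{both} factors in each cross term are of size $\Theta(h_0)$: the control error by (\ref{u-uTh}), and the $L^2$ norms of the state and adjoint errors by the $L^2$-superconvergence estimates built on the duality/Lemma~\ref{etah} machinery. This smallness is exactly what renders the $\|r_{\mathcal{T}_{h_{k}}}\|_{\Omega}^2$ part of the defect absorbable for $h_0$ small, and one has to keep careful track that $\Theta$ is monotone under refinement so that all constants are uniform in $k$ and the summation over $k$ does not accumulate.
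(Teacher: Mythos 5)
Your proposal is correct, and while it shares the paper's backbone -- the polarization identity (\ref{eth-etH+r}), the Galerkin rewriting of the two cross terms as $(u-u_{\mathcal{T}_{h_{k+1}}},y_{\mathcal{T}_{h_k}}-y_{\mathcal{T}_{h_{k+1}}})$ and $(y-y_{\mathcal{T}_{h_{k+1}}},p_{\mathcal{T}_{h_k}}-p_{\mathcal{T}_{h_{k+1}}})$, telescoping, and the final appeal to Theorem \ref{Reliability} -- you handle the defect terms by a genuinely different mechanism. The paper bounds the cross terms through the auxiliary discrete solutions $\breve{y}=\mathcal{S}_{\mathcal{T}_h}(f+u_{\mathcal{T}_H})$ and $\breve{p}$, invokes Lemma \ref{ythk-y} to convert $\|y_{\mathcal{T}_H}-\breve{y}\|_{\widetilde{H}^{\frac{\alpha}{2}}(\Omega)}$ and $\|p_{\mathcal{T}_H}-\breve{p}\|_{\widetilde{H}^{\frac{\alpha}{2}}(\Omega)}$ into local estimator contributions on refined elements, and then uses the reduction-based Remark \ref{r2} so that an estimator difference $\mathcal{E}_{ocp}^2(\mathcal{T}_{h_k})-\mathcal{E}_{ocp}^2(\mathcal{T}_{h_{k+1}})$ telescopes alongside the energy errors; no absorption of an $\|r_k\|_\Omega^2$ term is needed. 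You instead apply Cauchy--Schwarz directly, load a factor $\Theta(h_0)$ onto \emph{both} factors via (\ref{u-uTh}), (\ref{yythor}) and the analogous $L^2$ superconvergence of the adjoint error, and then absorb the resulting $C\Theta^2(h_0)\|r_k\|_\Omega^2$ for $h_0\ll1$; this avoids $\breve{y},\breve{p}$, Lemma \ref{ythk-y} and Remark \ref{r2} altogether and telescopes only the energy errors, which is arguably leaner. The price is the adjoint superconvergence estimate $\|p-p_{\mathcal{T}_{h_j}}\|\leq C\Theta(h_j)\bigl(\|y-y_{\mathcal{T}_{h_j}}\|_{\widetilde{H}^{\frac{\alpha}{2}}(\Omega)}+\|p-p_{\mathcal{T}_{h_j}}\|_{\widetilde{H}^{\frac{\alpha}{2}}(\Omega)}\bigr)$, which you assert rather than prove; it does hold, since $a(w_h,p-p_{\mathcal{T}_{h_j}})=(y-y_{\mathcal{T}_{h_j}},w_h)$ allows the Step~4 duality argument of Theorem \ref{Reliability} to be repeated verbatim with $y_{\mathcal{T}}$ replaced by $y_{\mathcal{T}_{h_j}}$, followed by (\ref{yythor}), so this is a small gap of exposition rather than of substance. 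You also correctly exploit the monotonicity of $\Theta$ under nested refinement, which the paper uses implicitly when it replaces $\Theta(h),\Theta(H)$ by $\Theta(h_0)$; as in the paper, your constant in fact also depends on $\gamma$ and $\beta$ through (\ref{u-uTh}) and the reliability bound, which matches the paper's level of bookkeeping.
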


\begin{proof}
 At first we prove the case $k\geq1.$ For convenience, we use $(y_{\mathcal{T}_H},p_{\mathcal{T}_H},u_{\mathcal{T}_H},\lambda_{\mathcal{T}_H}),\ (y_{\mathcal{T}_h},p_{\mathcal{T}_h},u_{\mathcal{T}_h},\lambda_{\mathcal{T}_h})$ to denote $(y_{\mathcal{T}_{h_{k}}},p_{\mathcal{T}_{h_{k}}},u_{\mathcal{T}_{h_{k}}},\lambda_{\mathcal{T}_{h_{k}}})$  and $(y_{\mathcal{T}_{h_{k+1}}},p_{\mathcal{T}_{h_{k+1}}},u_{\mathcal{T}_{h_{k+1}}},\lambda_{\mathcal{T}_{h_{k+1}}})$. So it suffices to proceed in five steps.

$\underline{Step\ 1.}$
 Since $y_{\mathcal{T}_H}-y_{\mathcal{T}_h}\in V_{\mathcal{T}_h}$,  we have that
	\begin{align}\label{ay}
 a(y-y_{\mathcal{T}_h},y_{\mathcal{T}_H}-y_{\mathcal{T}_h})&=(u-u_{\mathcal{T}_h},y_{\mathcal{T}_H}-y_{\mathcal{T}_h})\nonumber\\
 &\leq \frac{1}{2}\|u-u_{\mathcal{T}_h}\|^2+\frac{1}{2}\|y_{\mathcal{T}_H}-y_{\mathcal{T}_h}\|^2.
\end{align}
To control the right hand side of (\ref{ay}), we utilize the auxiliary state $\breve{y}$, defined as $\breve{y}=S_{\mathcal{T}_h}(f+u_{\mathcal{T}_H})$, the control $u$ defined in (\ref{zuiyou}) and combine (\ref{u-uTh}), Lemma \ref{etah} to obtain
	\begin{align*}
&\quad a(y-y_{\mathcal{T}_h},y_{\mathcal{T}_H}-y_{\mathcal{T}_h})\\
 &\leq  \frac{C}{\gamma^2}\Theta^2(h)\left( \|p-p_{\mathcal{T}_h}\|^2_{\widetilde{H}^{\frac{\alpha}{2}}(\Omega)} + \| y-y_{\mathcal{T}_h}\|^2_{\widetilde{H}^{\frac{\alpha}{2}}(\Omega)} \right)+C\|y_{\mathcal{T}_H}-\breve{y}\|^2+C\|\breve{y}-y_{\mathcal{T}_h}\|^2\\
 &\leq \frac{C}{\gamma^2}\Theta^2(h)\left( \|p-p_{\mathcal{T}_h}\|^2_{\widetilde{H}^{\frac{\alpha}{2}}(\Omega)} + \| y-y_{\mathcal{T}_h}\|^2_{\widetilde{H}^{\frac{\alpha}{2}}(\Omega)} \right)+C\Theta^2(H) \|y_{\mathcal{T}_H}-\breve{y}\|_{\widetilde{H}^{\frac{\alpha}{2}}(\Omega)}^2+C\|u_{\mathcal{T}_H}-u_{\mathcal{T}_h}\|^2\\
 &\leq  \frac{C}{\gamma^2}\Theta^2(h)\left( \|p-p_{\mathcal{T}_h}\|^2_{\widetilde{H}^{\frac{\alpha}{2}}(\Omega)} + \| y-y_{\mathcal{T}_h}\|^2_{\widetilde{H}^{\frac{\alpha}{2}}(\Omega)} \right)+C\Theta^2(H) \|y_{\mathcal{T}_H}-\breve{y}\|_{\widetilde{H}^{\frac{\alpha}{2}}(\Omega)}^2+C\|u_{\mathcal{T}_H}-u\|^2+C\|u-u_{\mathcal{T}_h}\|^2 \\
 &\leq  \frac{C}{\gamma^2}\Theta^2(h)\left( \|p-p_{\mathcal{T}_h}\|^2_{\widetilde{H}^{\frac{\alpha}{2}}(\Omega)} + \| y-y_{\mathcal{T}_h}\|^2_{\widetilde{H}^{\frac{\alpha}{2}}(\Omega)} \right)+C\Theta^2(H) \|y_{\mathcal{T}_H}-\breve{y}\|_{\widetilde{H}^{\frac{\alpha}{2}}(\Omega)}^2+ \frac{C}{\gamma^2}\Theta^2(H)\left( \|p-p_{\mathcal{T}_H}\|^2_{\widetilde{H}^{\frac{\alpha}{2}}(\Omega)} + \| y-y_{\mathcal{T}_H}\|^2_{\widetilde{H}^{\frac{\alpha}{2}}(\Omega)} \right)  .
\end{align*}

$\underline{Step\ 2.}$
The goal of this step is to bound $a(p-p_{\mathcal{T}_h},p_{\mathcal{T}_H}-p_{\mathcal{T}_h})$. Similarly, Since $p_{\mathcal{T}_H}-p_{\mathcal{T}_h}\in V_{\mathcal{T}_h}$,  we have that
\begin{align*}
 a(p-p_{\mathcal{T}_h},p_{\mathcal{T}_H}-p_{\mathcal{T}_h})&=(y-y_{\mathcal{T}_h},p_{\mathcal{T}_H}-p_{\mathcal{T}_h})\nonumber\\
 &\leq \frac{1}{2}\|y-y_{\mathcal{T}_h}\|^2+\frac{1}{2}\|p_{\mathcal{T}_H}-p_{\mathcal{T}_h}\|^2.
 \end{align*}

In an analogous way, we utilize the auxiliary adjoint state $\breve{p}$, defined as $\breve{p}=\mathcal{S}^*_{\mathcal{T}_h}(\mathcal{S}_{\mathcal{T}_H}(f+u_{\mathcal{T}_H})-y_d)$, the control $u$ defined in (\ref{zuiyou}) and combine (\ref{opy-yTh}), Lemma \ref{etah}, (\ref{yythor}) to obtain
	\begin{align*}
 &\quad a(p-p_{\mathcal{T}_h},p_{\mathcal{T}_H}-p_{\mathcal{T}_h})\\
 &\leq \frac{C}{\gamma^2}\Theta^2(h)\left( \|p-p_{\mathcal{T}_h}\|^2_{\widetilde{H}^{\frac{\alpha}{2}}(\Omega)} + \| y-y_{\mathcal{T}_h}\|^2_{\widetilde{H}^{\frac{\alpha}{2}}(\Omega)} \right )+C\|p_{\mathcal{T}_H}-\breve{p}\|^2+C\|\breve{p}-p_{\mathcal{T}_h}\|^2\\
 &\leq \frac{C}{\gamma^2}\Theta^2(h)\left( \|p-p_{\mathcal{T}_h}\|^2_{\widetilde{H}^{\frac{\alpha}{2}}(\Omega)} + \| y-y_{\mathcal{T}_h}\|^2_{\widetilde{H}^{\frac{\alpha}{2}}(\Omega)} \right)+C\Theta^2(H) \|p_{\mathcal{T}_H}-\breve{p}\|_{\widetilde{H}^{\frac{\alpha}{2}}(\Omega)}^2+C\|y_{\mathcal{T}_H}-y_{\mathcal{T}_h}\|^2\\
 &\leq \frac{C}{\gamma^2}\Theta^2(h)\left( \|p-p_{\mathcal{T}_h}\|^2_{\widetilde{H}^{\frac{\alpha}{2}}(\Omega)} + \| y-y_{\mathcal{T}_h}\|^2_{\widetilde{H}^{\frac{\alpha}{2}}(\Omega)} \right)+C\Theta^2(H) \|p_{\mathcal{T}_H}-\breve{p}\|_{\widetilde{H}^{\frac{\alpha}{2}}(\Omega)}^2\\
 &\quad+\frac{C}{\gamma^2}\Theta^2(H)\left( \|p-p_{\mathcal{T}_H}\|^2_{\widetilde{H}^{\frac{\alpha}{2}}(\Omega)} + \| y-y_{\mathcal{T}_H}\|^2_{\widetilde{H}^{\frac{\alpha}{2}}(\Omega)} \right)+ C\Theta^2(H) \|y_{\mathcal{T}_H}-\breve{y}\|_{\widetilde{H}^{\frac{\alpha}{2}}(\Omega)}^2.
\end{align*}

$\underline{Step\ 3.}$
By the relation (\ref{eth-etH+r}) and Lemma \ref{ythk-y}, combining above estimates leads to
\begin{align*}
&\quad\|e_{k+1}\|_{\Omega}^2-\|e_{k}\|_{\Omega}^2+\|r_{k}\|_{\Omega}^2\\
&=2a(y-y_{\mathcal{T}_{h}},y_{\mathcal{T}_{H}}-y_{\mathcal{T}_{h}})
+2a(p-p_{\mathcal{T}_{h}},p_{\mathcal{T}_{H}}-p_{\mathcal{T}_{h}})\nonumber\\
&\leq  \frac{C}{\gamma^2}\Theta^2(h)\left( \|p-p_{\mathcal{T}_h}\|^2_{\widetilde{H}^{\frac{\alpha}{2}}(\Omega)} + \| y-y_{\mathcal{T}_h}\|^2_{\widetilde{H}^{\frac{\alpha}{2}}(\Omega)} \right)+ \frac{C}{\gamma^2}\Theta^2(H)\left( \|p-p_{\mathcal{T}_H}\|^2_{\widetilde{H}^{\frac{\alpha}{2}}(\Omega)} + \| y-y_{\mathcal{T}_H}\|^2_{\widetilde{H}^{\frac{\alpha}{2}}(\Omega)} \right)\\
&\quad+C\Theta^2(H) \left(\|y_{\mathcal{T}_H}-\breve{y}\|_{\widetilde{H}^{\frac{\alpha}{2}}(\Omega)}^2+\|p_{\mathcal{T}_H}-\breve{p}\|_{\widetilde{H}^{\frac{\alpha}{2}}(\Omega)}^2\right)\\
&\leq \frac{C}{\gamma^2}\Theta^2(h)\left( \|p-p_{\mathcal{T}_h}\|^2_{\widetilde{H}^{\frac{\alpha}{2}}(\Omega)} + \| y-y_{\mathcal{T}_h}\|^2_{\widetilde{H}^{\frac{\alpha}{2}}(\Omega)} \right)+ \frac{C}{\gamma^2}\Theta^2(H)\left( \|p-p_{\mathcal{T}_H}\|^2_{\widetilde{H}^{\frac{\alpha}{2}}(\Omega)} + \| y-y_{\mathcal{T}_H}\|^2_{\widetilde{H}^{\frac{\alpha}{2}}(\Omega)} \right)\\
&\quad+C\Theta^2(H) \left(\sum\limits_{K\in\mathcal{T}_{H}\setminus\mathcal{T}_{h}}E_y^2(y_{\mathcal{T}_{H}},K)
+\sum\limits_{K\in\mathcal{T}_{H}\setminus\mathcal{T}_{h}}E_p^2(p_{\mathcal{T}_{H}},K)\right). \end{align*}

Further, a simple application of the triangle inequality reveal that
\begin{align*}
&\quad\|r_{k}\|_{\Omega}^2\\
%&\|y_{\mathcal{t}_{h}}-y_{\mathcal{t}_{h}}\|^2_{\widetilde{h}^{\frac{\alpha}{2}}(\omega)}
%+\|p_{\mathcal{t}_{h}}-p_{\mathcal{t}_{h}}\|^2_{\widetilde{h}^{\frac{\alpha}{2}}(\omega)}
%+\|u_{\mathcal{t}_{h}}-u_{\mathcal{t}_{h}}\|^2+\|\lambda_{\mathcal{t}_{h}}-\lambda_{\mathcal{t}_{h}}\|^2\\
&\leq(\|y-y_{\mathcal{T}_{H}}\|^2_{\widetilde{H}^{\frac{\alpha}{2}}(\Omega)}
+\|p-p_{\mathcal{T}_{H}}\|^2_{\widetilde{H}^{\frac{\alpha}{2}}(\Omega)}
)-(\|y-y_{\mathcal{T}_{h}}\|^2_{\widetilde{H}^{\frac{\alpha}{2}}(\Omega)}
+\|p-p_{\mathcal{T}_{h}}\|^2_{\widetilde{H}^{\frac{\alpha}{2}}(\Omega)})\\
&\quad+ \frac{C}{\gamma^2}\Theta^2(h)\left( \|p-p_{\mathcal{T}_h}\|^2_{\widetilde{H}^{\frac{\alpha}{2}}(\Omega)} + \| y-y_{\mathcal{T}_h}\|^2_{\widetilde{H}^{\frac{\alpha}{2}}(\Omega)}\right)+ C\Theta^2(H) \sum\limits_{K\in\mathcal{T}_{H}\setminus\mathcal{T}_{h}}\mathcal{E}_{K}^2(y_{\mathcal{T}_H},p_{\mathcal{T}_H},K)\\
&\quad+\frac{C}{\gamma^2}\Theta^2(H)\left( \|p-p_{\mathcal{T}_H}\|^2_{\widetilde{H}^{\frac{\alpha}{2}}(\Omega)} + \| y-y_{\mathcal{T}_H}\|^2_{\widetilde{H}^{\frac{\alpha}{2}}(\Omega)}\right)\\
&\leq \left(1+\frac{C}{\gamma^2}\Theta^2(h_0) \right)\left( \|y-y_{\mathcal{T}_H}\|_{\widetilde{H}^{\frac{\alpha}{2}}(\Omega)}^2+\|p-p_{\mathcal{T}_H} \|^2_{\widetilde{H}^{\frac{\alpha}{2}}(\Omega)}\right)+ C\Theta^2(H) \sum\limits_{K\in\mathcal{T}_{H}\setminus\mathcal{T}_{h}}\mathcal{E}_{K}^2(y_{\mathcal{T}_H},p_{\mathcal{T}_H},K)\\
&\quad-\left(1-\frac{C}{\gamma^2}\Theta^2(h_0)\right ) \left( \|y-y_{\mathcal{T}_h}\|_{\widetilde{H}^{\frac{\alpha}{2}}(\Omega)}^2+\|p-p_{\mathcal{T}_h} \|^2_{\widetilde{H}^{\frac{\alpha}{2}}(\Omega)}\right )\\
&\leq \left(1+\frac{C}{\gamma^2}\Theta^2(h_0) \right)\left( \|y-y_{\mathcal{T}_H}\|_{\widetilde{H}^{\frac{\alpha}{2}}(\Omega)}^2+\|p-p_{\mathcal{T}_H} \|^2_{\widetilde{H}^{\frac{\alpha}{2}}(\Omega)}\right)+ C\Theta^2(H) \sum\limits_{K\in\mathcal{T}_{H}\setminus\mathcal{T}_{h}}\mathcal{E}_{K}^2(y_{\mathcal{T}_H},p_{\mathcal{T}_H},K)\\
&\quad-\left(1-\frac{C}{\gamma^2}\Theta^2(h_0) \right) \left( \|y-y_{\mathcal{T}_h}\|_{\widetilde{H}^{\frac{\alpha}{2}}(\Omega)}^2+\|p-p_{\mathcal{T}_h} \|^2_{\widetilde{H}^{\frac{\alpha}{2}}(\Omega)}\right ),
\end{align*}
provided $h_0\ll1.$
We apply the Lemma \ref{thtH} to conclude that
\begin{align*}
%&\|y_{\mathcal{T}_{H}}-y_{\mathcal{T}_{h}}\|^2_{\widetilde{H}^{\frac{\alpha}{2}}(\Omega)}
%+\|p_{\mathcal{T}_{H}}-p_{\mathcal{T}_{h}}\|^2_{\widetilde{H}^{\frac{\alpha}{2}}(\Omega)}
%+\|u_{\mathcal{T}_{H}}-u_{\mathcal{T}_{h}}\|^2+\|\lambda_{\mathcal{T}_{H}}-\lambda_{\mathcal{T}_{h}}\|^2\\
&\quad\|r_{k}\|_{\Omega}^2
-2\frac{C}{\gamma^2}\Theta^2(h_0)\left(\|y-y_{\mathcal{T}_H}\|_{\widetilde{H}^{\frac{\alpha}{2}}(\Omega)}^2+\|p-p_{\mathcal{T}_H}\|_{\widetilde{H}^{\frac{\alpha}{2}}(\Omega)}^2\right)\\
&\leq\left(1-\frac{C}{\gamma^2}\Theta^2(h_0) \right)\left(\|y-y_{\mathcal{T}_H}\|_{\widetilde{H}^{\frac{\alpha}{2}}(\Omega)}^2+\|p-p_{\mathcal{T}_H}\|_{\widetilde{H}^{\frac{\alpha}{2}}(\Omega)}^2\right)-\left(1-\frac{C}{\gamma^2}\Theta^2(h_0) \right)\left(\|y-y_{\mathcal{T}_h}\|_{\widetilde{H}^{\frac{\alpha}{2}}(\Omega)}^2+\|p-p_{\mathcal{T}_h}\|_{\widetilde{H}^{\frac{\alpha}{2}}(\Omega)}^2\right)\\
&\quad+\frac{ C\Theta^2(h_0)}{1-2^{-\frac{\rho\xi}{d}}}
\left(\mathcal{E}_{ocp}^2(y_{\mathcal{T}_H},p_{\mathcal{T}_H},\mathcal{T}_{H})-\mathcal{E}_{ocp}^2(y_{\mathcal{T}_H},p_{\mathcal{T}_H},\mathcal{T}_{h})\right),
\end{align*}
where $h_0\ll1.$

To conclude the previous estimate combined with Theorem \ref{Reliability} and Remark \ref{r2} leads to the general quasi-orthogonality as follows
\begin{align*}
&\quad\sum\limits_{k=l}^{N}\left\{\|r_{k}\|^2_{\Omega}-2\frac{C}{\gamma^2}\Theta^2(h_0)\left(\|y-y_{\mathcal{T}_{h_{k}}}\|_{\widetilde{H}^{\frac{\alpha}{2}}(\Omega)}^2+\|p-p_{\mathcal{T}_{h_{k}}}\|_{\widetilde{H}^{\frac{\alpha}{2}}(\Omega)}^2\right)\right\}
\\&\leq\sum\limits_{k=l}^N\left\{\left(1-\frac{C}{\gamma^2}\Theta^2(h_0) \right)\left(\|y-y_{\mathcal{T}_{h_{k}}}\|_{\widetilde{H}^{\frac{\alpha}{2}}(\Omega)}^2-\|y-y_{\mathcal{T}_{h_{k+1}}}\|_{\widetilde{H}^{\frac{\alpha}{2}}(\Omega)}^2\right) -\left(1-\frac{C}{\gamma^2}\Theta^2(h_0) \right) \left(\|p-p_{\mathcal{T}_{h_{k}}}\|_{\widetilde{H}^{\frac{\alpha}{2}}(\Omega)}^2-\|p-p_{\mathcal{T}_{h_{k+1}}}\|_{\widetilde{H}^{\frac{\alpha}{2}}(\Omega)}^2\right)\right.\\
&\quad\left.+\frac{ C\Theta^2(h_0)}{1-2^{-\frac{\rho\xi}{d}}} \left(\mathcal{E}_{ocp}^2(y_{\mathcal{T}_{h_{k}}},p_{\mathcal{T}_{h_{k}}},\mathcal{T}_{h_{k}})-\mathcal{E}_{ocp}^2(y_{\mathcal{T}_{h_{k}}},p_{\mathcal{T}_{h_{k}}},\mathcal{T}_{h_{k+1}})\right)\right\}\\
&\leq\left(1-\frac{C}{\gamma^2}\Theta^2(h_0) \right)\left(\|y-y_{\mathcal{T}_{h_{l}}}\|_{\widetilde{H}^{\frac{\alpha}{2}}(\Omega)}^2+\|p-p_{\mathcal{T}_{h_{l}}}\|_{\widetilde{H}^{\frac{\alpha}{2}}(\Omega)}^2\right)+\frac{ C\Theta^2(h_0)}{1-2^{-\frac{\rho\xi}{d}}} \mathcal{E}_{ocp}^2(y_{\mathcal{T}_{h_{l}}},p_{\mathcal{T}_{h_{l}}},\mathcal{T}_{h_{l}})\\
&\leq C_{orth}\mathcal{E}_{ocp}^2(y_{\mathcal{T}_{h_{l}}},p_{\mathcal{T}_{h_{l}}},\mathcal{T}_{h_{l}}).
\end{align*}
This concludes the proof.
\end{proof}

For each $t>0$, if there exists constants $C_{ar},\ Q_{ar}>0$ such that
\begin{eqnarray}\label{rate}
C_{ar}\mathbb{A}_{t}(v)\leq \sup\limits_{\ell\in\mathbb{N}_{0}} (\#{{\mathcal{T}_{h_l}}})^t \mathcal{E}_{ocp}^2(y_{\mathcal{T}_{h_{l}}},p_{\mathcal{T}_{h_{l}}},\mathcal{T}_{h_{l}})<  Q_{ar}\mathbb{A}_{t}(v),\label{optimality_2}
\end{eqnarray}
we say that   the adaptive Algorithm 2 is rate optimal with respect to the
error estimator, where
\begin{eqnarray*}
\mathbb{A}_{t}(v)=\sup\limits_{N\in\mathbb{N}_{0}}(N+1)^t\mathop{\min}\limits_{\mathcal{T}_h\in\mathbb{T} \atop \#{\mathcal{T}_h}-\#{{\mathcal{T}_{h_0}}}\leq N } \mathcal{E}_{ocp}^2(y_{\mathcal{T}_{h_{l}}},p_{\mathcal{T}_{h_{l}}},\mathcal{T}_{h_{l}}).
\end{eqnarray*}
According to \cite{Car}, through the proof of the above Theorem  we indeed also verify that Algorithm 1 can reach the optimal convergence order in the sense of (\ref{rate}).

\section{ Numerical\ results}
In this section, three numerical experiments are presented, and the exact solutions in the circle domain of the first example are given. The solutions in the square domain of the second and third examples are not known. We proceed by establishing fixed values for the optimal state and adjoint state variable, by employing the projection formulas
 $u=\Pi_{[a,b]}\left(-\frac{1}{\gamma}(p+\beta\lambda)\right)$
and
 $\lambda=\Pi_{[-1,1]}\left(-\frac{1}{\beta}p\right)$.

%Before that, we define the following effective index:
%$$\mbox{effectivity}:=\frac{\mathcal{R}(u_{\mathcal{T}_h},z_{\mathcal{T}_h},\mathcal{T}_h)}{\|(u-u_{\mathcal{T}_h}, z-z_{\mathcal{T}_h})\|_{\widetilde{H}^{s}(\Omega)}}.$$
\begin{example}\label{Exm:1}
We set $\Omega=B(0,1)$, $a=-0.5$, $b=0.5$, $c=3$ and the exact solutions are as follows:
\begin{align*}
y&=\frac{2^{-\alpha}(1-|x|^2)^{\frac{\alpha}{2}}}{\Gamma(1+\frac{\alpha}{2})^2},\ p=c y,\\
u&=\Pi_{[a,b]}\left(-\frac{1}{\gamma}(c y+\beta\lambda)\right),\ \lambda=\Pi_{[-1,1]}\left(-\frac{1}{\beta}c y\right).
\end{align*}
\end{example}
%In general, we set the parameters respectively. The specific values may vary depending on the values of $\gamma$ and $\beta$. Due to truncation of the solution, different value strategies may need to be applied.

Figure \ref{adpmesh} shows the initial mesh and the final refinement mesh with $\alpha=0.5,\ \theta=0.7$. Since the exact solutions of the state variable and adjoint variable exhibit smoothdness within the unit circle, with singularities $\partial\Omega$ on the boundary, so the mesh is refined mainly in the region close to the boundary.

 \begin{figure}[!htbp]
\centering
%\flushleft
\label{2a}
\includegraphics[width=7.2cm]{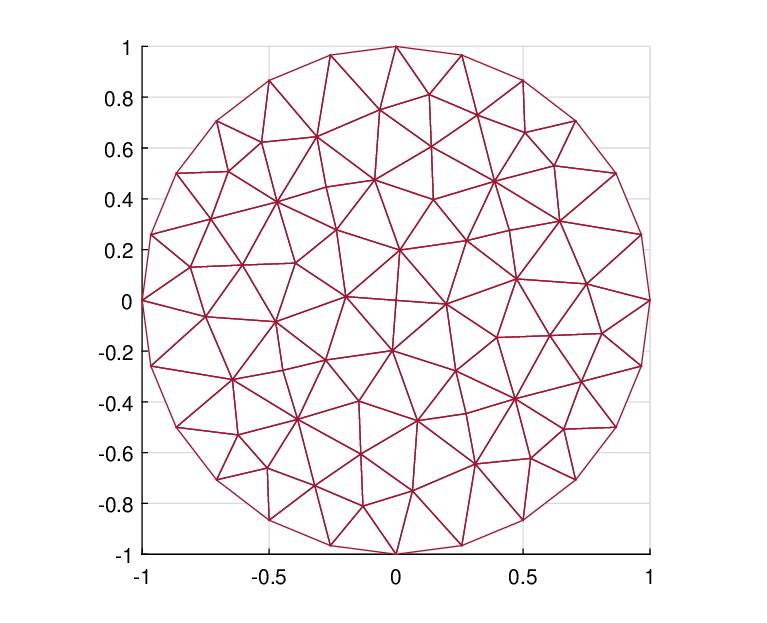}
\hspace{-0.01mm}
\label{2b}
\includegraphics[width=6.7cm]{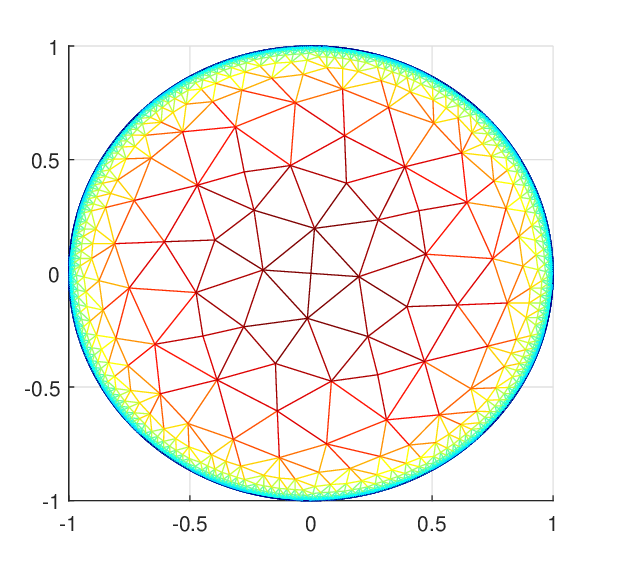}
%\hspace{-1mm}
%\label{2c}
%\includegraphics[width=4cm]{control_0.25.eps}
%\hspace{-5mm}
\caption{The initial mesh (left)  and the final refinement mesh (right) with $\alpha=0.5,\theta=0.7$ on the circle.}
\label{adpmesh}
\end{figure}

In Figure \ref{025075estimator}, the computational rates of convergence for the computable error estimators and indicators $\mathcal{E}_{ocp}, E_y$ and $E_p$ for $\alpha=0.5$ and $\alpha=1.5$ are presented, respectively. It can be observed that, in both cases, each contribution decays with the optimal rate $N^{-\frac{1}{2}}$.
 \begin{figure}[!htbp]
\centering
%\flushleft
\label{2a}
\includegraphics[width=6.5cm]{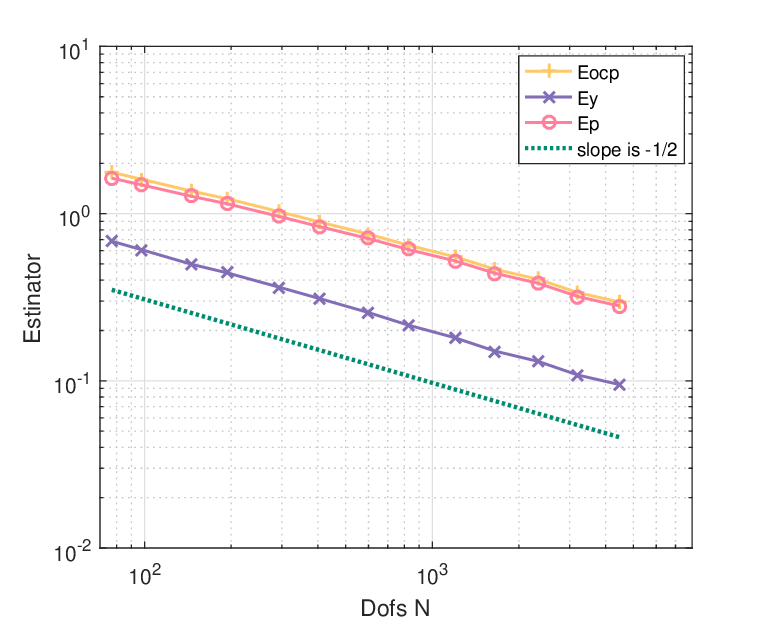}
\hspace{-0.01mm}
\label{2b}
\includegraphics[width=6.5cm]{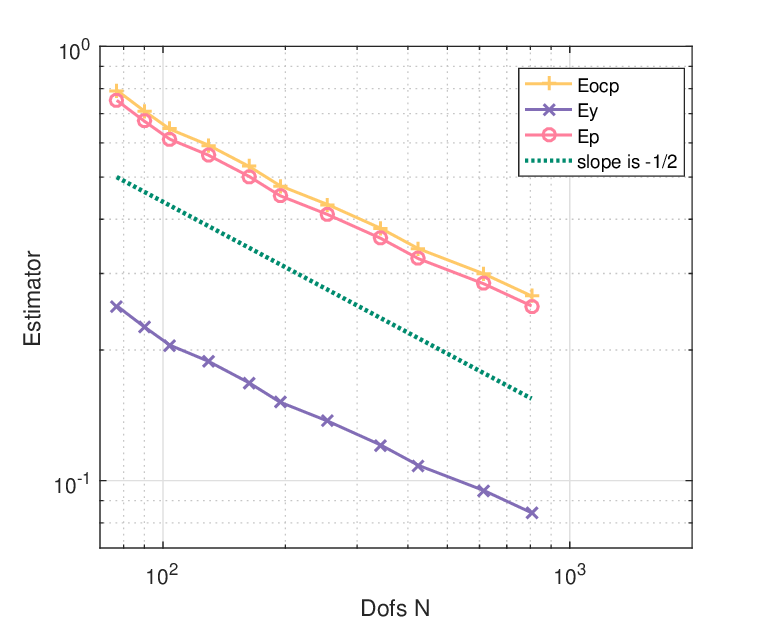}
%\hspace{-1mm}
%\label{2c}
%\includegraphics[width=4cm]{control_0.25.eps}
%\hspace{-5mm}
\caption{The convergent behaviors of the indicators and estimators for $\alpha=0.5,\ \theta=0.7$ (left)  and $\alpha=1.5,\ \theta=0.5 $ (right).}
\label{025075estimator}
\end{figure}

We set  $\alpha=0.5$ and the parameter $\theta= 0.5,\ 0.7,\ 1$ that governs the module $\mathbf{MARK}$. The left plot of Figure \ref{fig025} illustrates the convergence orders of the error estimators $\mathcal{E}_{ocp}$ and error indicators $E_y,\ E_p$ under different values of $\theta$. On the right plot of Figure \ref{fig025}, the convergence orders of the errors for the state and adjoint variables in $\|\cdot\|_{\widetilde{H}^{\frac{\alpha}{2}}(\Omega)}$ and the effectivity indices which are given by $\mathcal{E}_{ocp}/\|e\|_{\Omega}$ are presented for different $\theta$ values. From the Figure \ref{fig025}, it is observed that when $\theta=1$, indicating uniform refinement, the displayed convergence rates do not reach optimality. However, for $\theta<1,$ the convergence rates of the error eatimators $\mathcal{E}_{ocp}$ and errors clearly converge to $N^{-\frac{1}{2}}$. Thus, our theoretical analysis is effectively verified.

 \begin{figure}[!htbp]
\centering
%\flushleft
\label{2a}
\includegraphics[width=6.5cm]{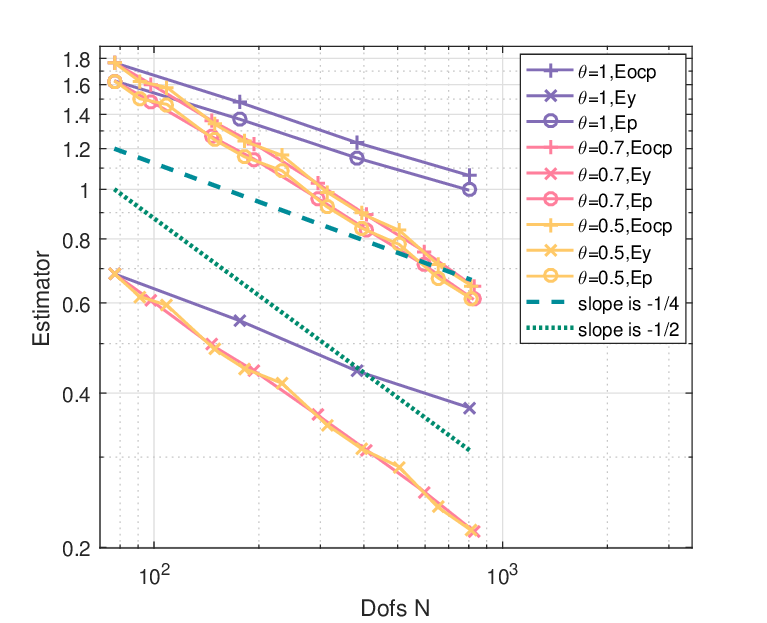}
\hspace{-0.01mm}
\label{2b}
\includegraphics[width=6.5cm]{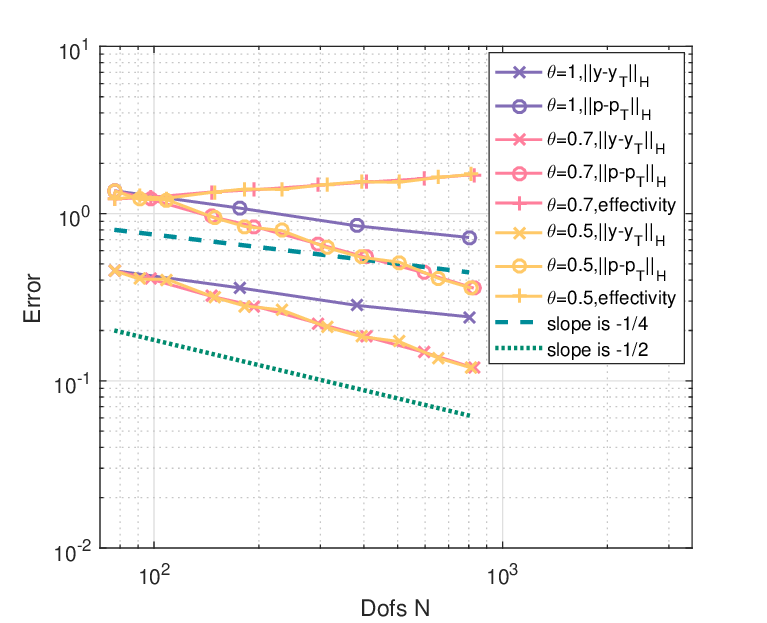}
%\hspace{-1mm}
%\label{2c}
%\includegraphics[width=4cm]{control_0.25.eps}
%\hspace{-5mm}
\caption{The convergent behaviors of the errors, indicators and estimators for fixed $\alpha=0.5$ and $\theta=0.5,0.7,1$ respectively on the circle.}
\label{fig025}
\end{figure}

Next, we consider the effect of changing the regularization parameter $\gamma$ on the system with $\alpha=1.5,\ \theta=0.5$ and $\beta=1$. Specifically, we examine the cases where $\gamma$ takes the values of $$\gamma\in \{10^0,\ 10^{-1},\ 10^{-2},\ 10^{-3},\ 10^{-4}\}.$$
 It can be seen from the Figure \ref{figbeta} that the error estimators $\mathcal{E}_{ocp}$, the error indicators $E_y,\ E_p,$ errors of state and adjoint variable in $\|\cdot\|_{\widetilde{H}^{\frac{\alpha}{2}}(\Omega)}$ can reach the optimal convergence order for all the values of the parameter $\gamma$ considered.
 \begin{figure}[!htbp]
\centering
%\flushleft
\label{2a}
\includegraphics[width=6.5cm]{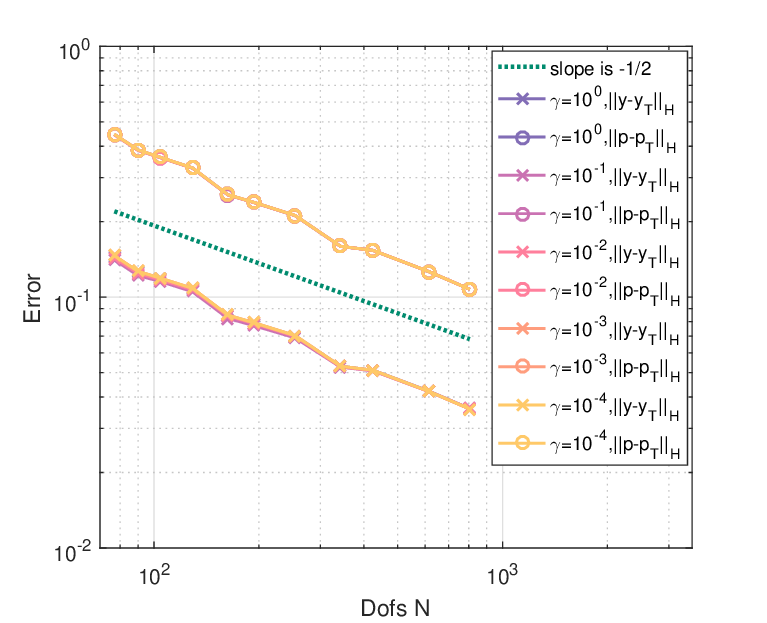}
\hspace{-0.01mm}
\label{2b}
\includegraphics[width=6.5cm]{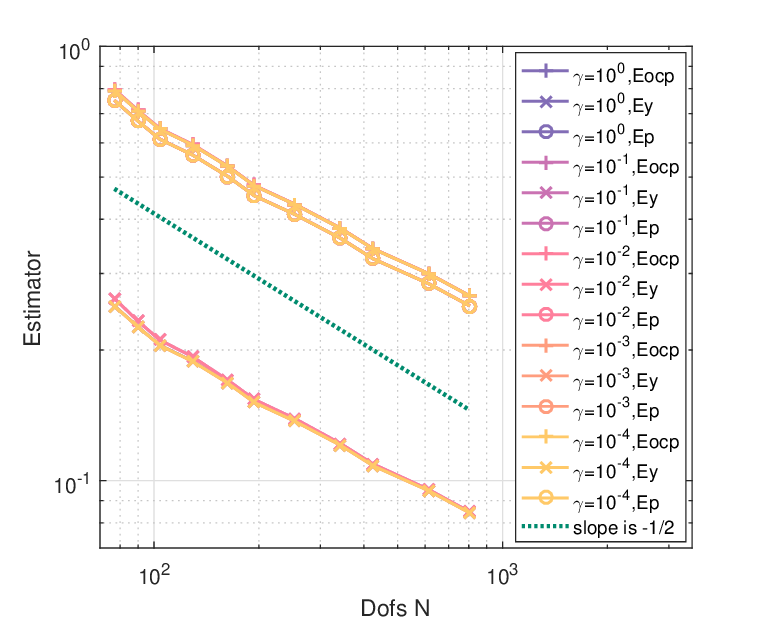}
%\hspace{-1mm}
%\label{2c}
%\includegraphics[width=4cm]{control_0.25.eps}
%\hspace{-5mm}
\caption{The convergent behaviors of the errors, indicators and estimators for all the values of the parameter $\gamma$ respectively on the circle.}
\label{figbeta}
\end{figure}

In Figures \ref{figbetanum}-\ref{statenum}, we show the profiles of the numerical solutions for the control and state when $\alpha=1.5,\ \theta=0.5$, respectively. It can be seen that as $\gamma$ decreases, the $L^1$ term dominates and the numerical solutions of the control become sparsier.
 \begin{figure}[!htbp]
\centering
%\flushleft
\begin{subfigure}[]

\includegraphics[width=4.5cm]{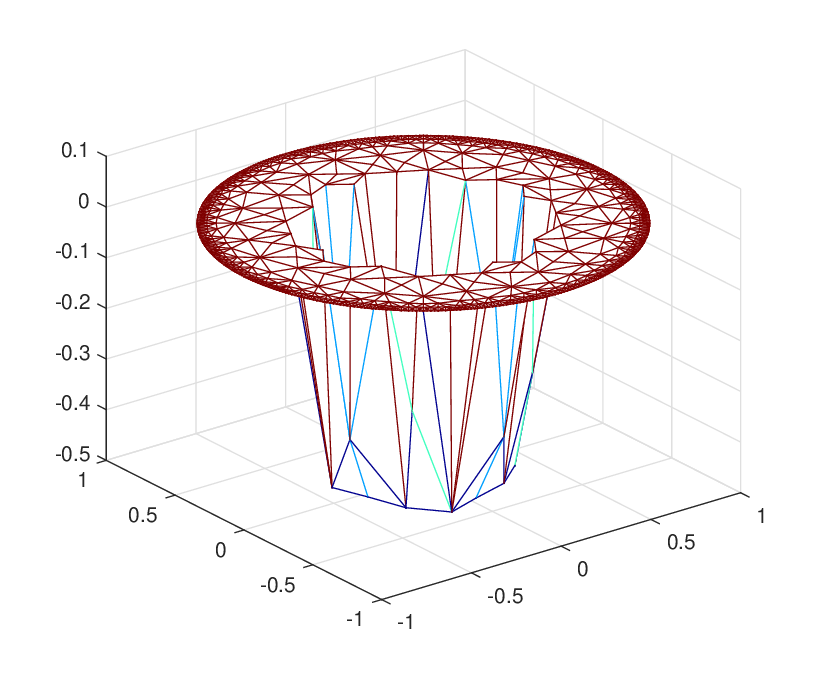}
\hspace{-0.01mm}
\end{subfigure}
\begin{subfigure}[]

\includegraphics[width=4.5cm]{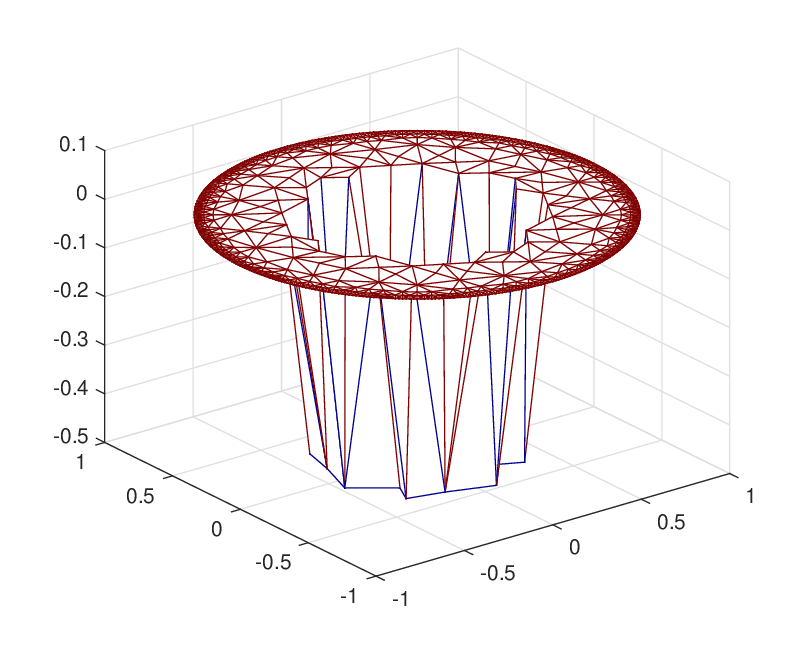}
\hspace{-0.01mm}
\end{subfigure}
\begin{subfigure}[]

\includegraphics[width=4.1cm]{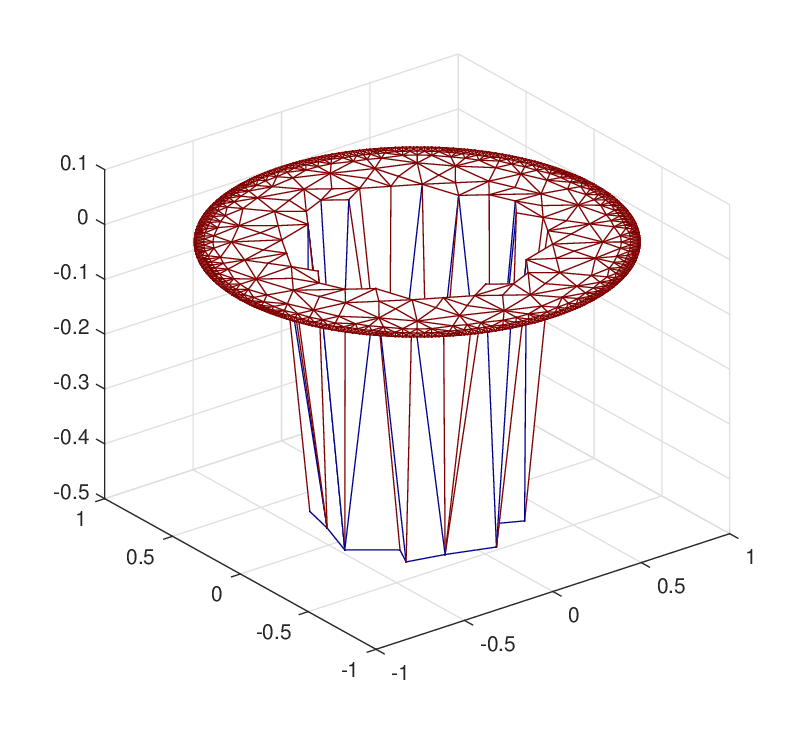}
\hspace{-0.01mm}
\end{subfigure}

\caption{The profiles of the numerically computed control with $\gamma=10^{-1}$ (a), $\gamma=10^{-2}$  (b), and $\gamma=10^{-3}$ (c).}
\label{figbetanum}
\end{figure}

 \begin{figure}[!htbp]
\centering
%\flushleft
\begin{subfigure}[]

\includegraphics[width=4.5cm]{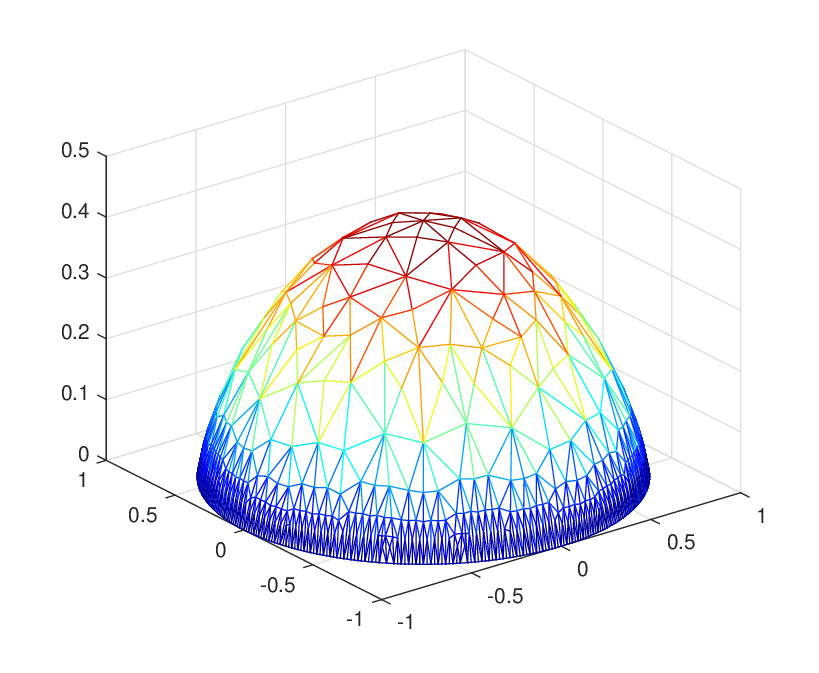}
\hspace{-0.01mm}
\end{subfigure}
\begin{subfigure}[]

\includegraphics[width=4.5cm]{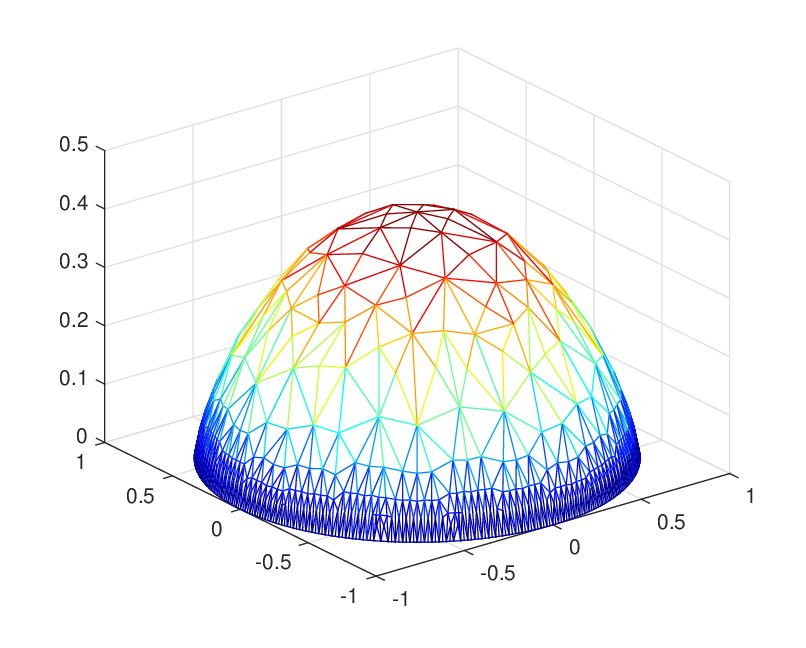}
\hspace{-0.01mm}
\end{subfigure}
\begin{subfigure}[]

\includegraphics[width=4.5cm]{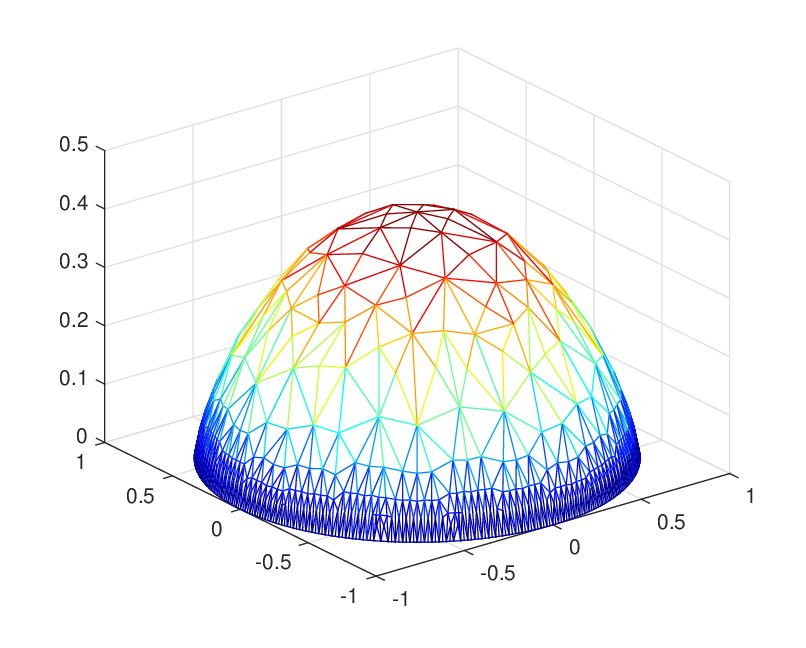}
\hspace{-0.01mm}
\end{subfigure}

\caption{The profiles of the numerically computed state with $\gamma=10^{-1}$ (a), $\gamma=10^{-2}$  (b), and $\gamma=10^{-3}$ (c).}
\label{statenum}
\end{figure}
\begin{example}\label{Exm:2}
In the second example we consider an optimal control problem with $f=-6,\ y_d=1$. We set $\Omega=(-1,1)^{2}$, $\gamma\in\{10^0,\ 10^{-1},\ 10^{-2}\}$, $\beta=1$, $a=-0.3$, $b=0.3$, respectively.
\end{example}

In Figure \ref{2mesh}  we show the initial mesh and the final refinement mesh with $\alpha=0.5,\ \theta=0.7$. The primary refinement behavior is observed to occur exclusively along the boundaries of the entire square domain.  This observation suggests that the estimators effectively capture the singularities of the exact solution along the entire boundary, thus guiding the mesh refinement process.

\begin{figure}[!htbp]
\centering
%\flushleft
\label{2a}
\includegraphics[width=6.5cm]{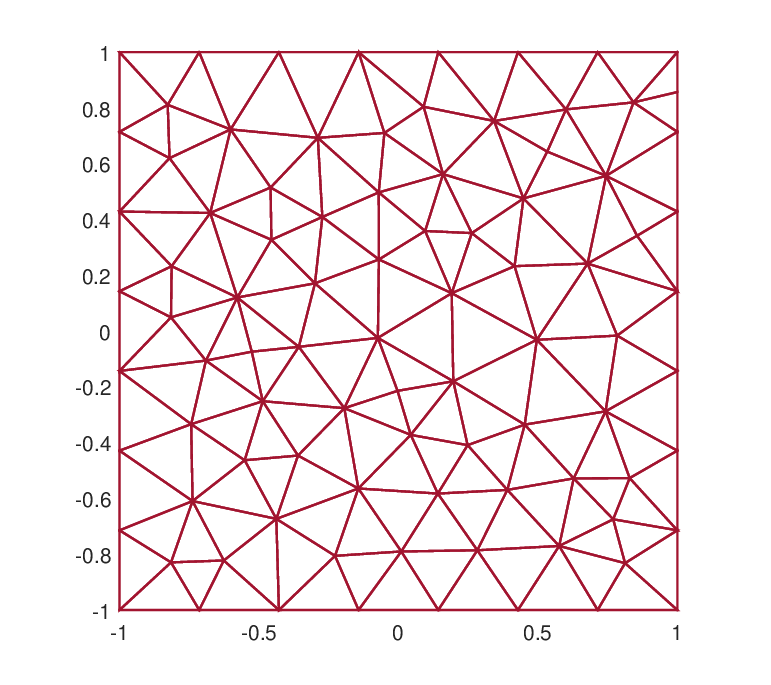}
\hspace{-0.01mm}
\label{2b}
\includegraphics[width=6.5cm,height=5.9cm]{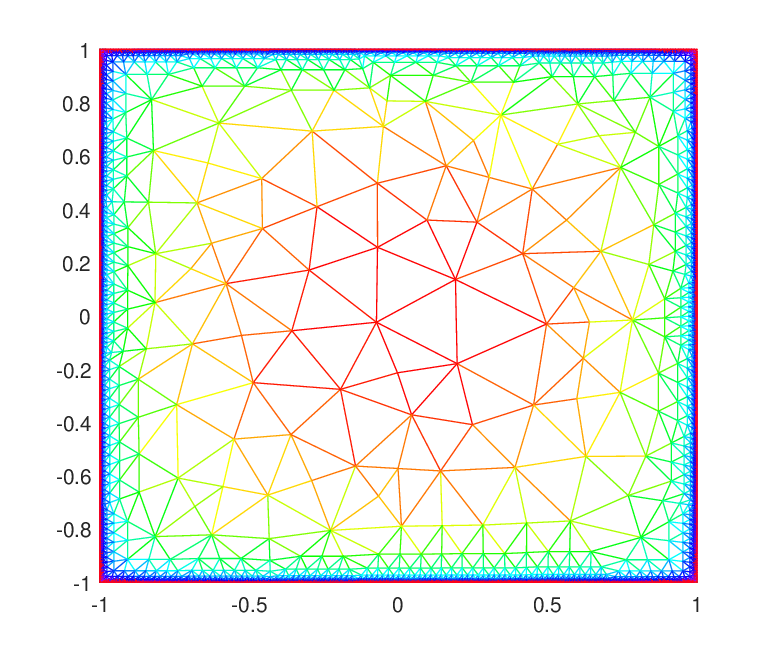}
%\hspace{-1mm}
%\label{2c}
%\includegraphics[width=4cm]{control_0.25.eps}
%\hspace{-5mm}
\caption{The initial mesh (left)  and the final refinement mesh (right) with $\alpha=0.5,\theta=0.7$ on the square.}
\label{2mesh}
\end{figure}

For $\alpha = 0.5$ and $\alpha = 1.5$, the Figure \ref{2e} shows that the AFEM proposed in Section 6 delivers
optimal experimental rates of convergence for the error estimators $\mathcal{E}_{ocp}$, the error indicators $E_y,\ E_p.$ The results obtained empirically are consistent with those of the previous example. The convergence rates of the estimators and indicators are $N^{-\frac{1}{4}}$ for uniform refinement, while adaptive refinement leads to optimal convergence rates of $N^{-\frac{1}{2}}$.
\begin{figure}[!htbp]
\centering
%\flushleft
\label{2a}
\includegraphics[width=6.4cm]{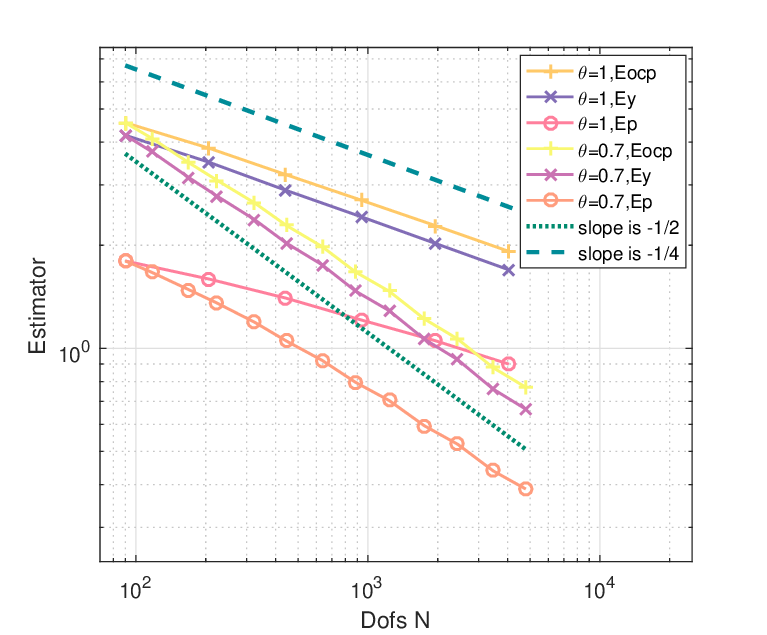}
\hspace{-0.03mm}
\label{2b}
\includegraphics[width=6.4cm]{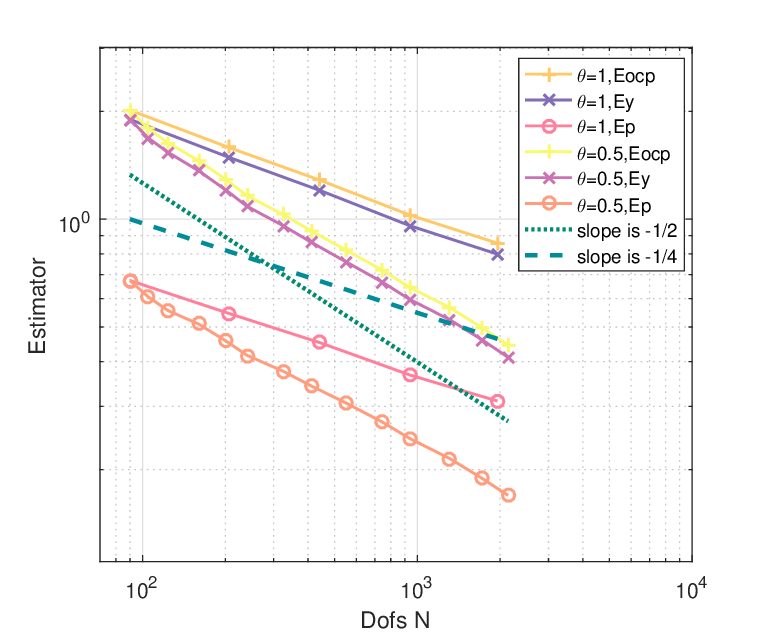}
%\hspace{-1mm}
%\label{2c}
%\includegraphics[width=4cm]{control_0.25.eps}
%\hspace{-5mm}
\caption{The convergent behaviors of the indicators and estimators for $\alpha=0.5,\ \theta=0.7,\ 1$ (left)  and  $\alpha=1.5,\ \theta=0.5,\ 1$(right).}
\label{2e}
\end{figure}
For  all choices of the parameter $\gamma$ considered, the Figure \ref{2gamma} shows the decrease of the total error estimators $\mathcal{E}_{ocp}$ and the error indicators $E_y,\ E_p$ with respect to the number of degrees of freedom (Dofs). In all the values of the parameter $\gamma$ cases the optimal rate $N^{-\frac{1}{2}}$ is achieved.

\begin{figure}[!htbp]
\centering
%\flushleft
\label{2a}
\includegraphics[width=6.4cm]{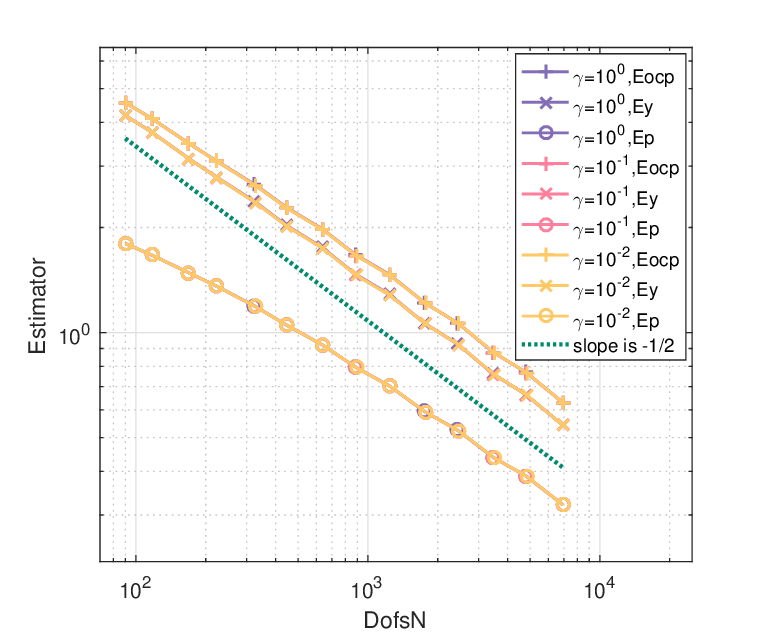}
\hspace{-0.03mm}
\label{2b}
\includegraphics[width=6.4cm]{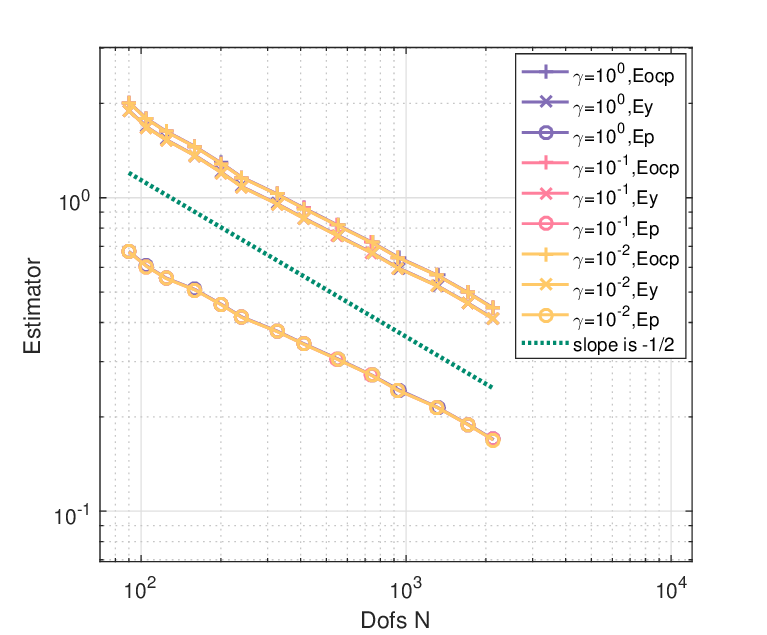}
%\hspace{-1mm}
%\label{2c}
%\includegraphics[width=4cm]{control_0.25.eps}
%\hspace{-5mm}
\caption{ The convergent behaviors of the indicators and estimators for all the values of the parameter $\gamma$ respectively on the square. We have considered $\alpha=0.5,\ \theta=0.7$ (left), $\alpha=1.5\ \theta=0.5$ (right) }
\label{2gamma}
\end{figure}
In Figures \ref{2numcontrol}-\ref{2numcontrol2}, we show the profiles of the numerical solutions for the control when $\alpha=0.5,\ \theta=0.7$ and $\alpha=1.5,\ \theta=0.5$, respectively. As $\gamma$ decreases, the numerical solutions of the control become sparsier.

 \begin{figure}[!htbp]
\centering
%\flushleft
\begin{subfigure}[]

\includegraphics[width=4.5cm]{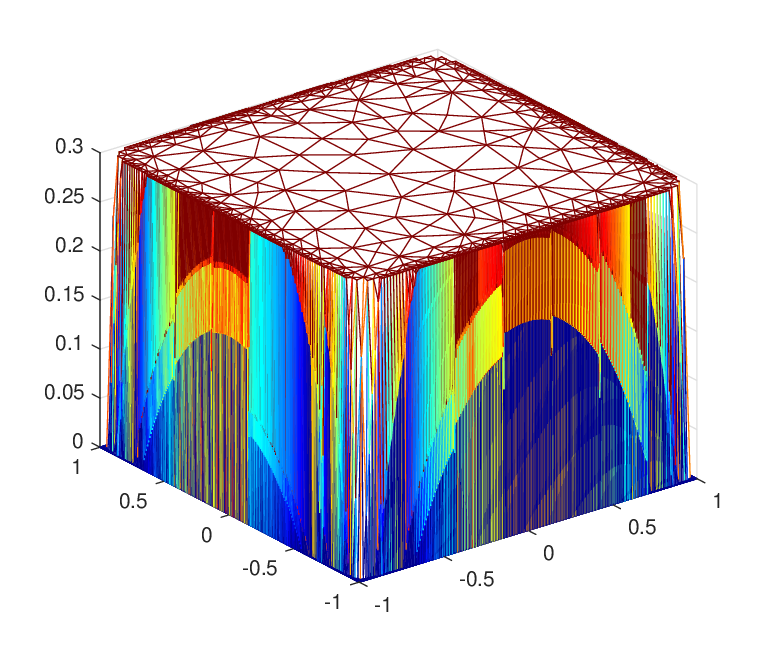}
\hspace{-0.01mm}
\end{subfigure}
\begin{subfigure}[]

\includegraphics[width=4.5cm]{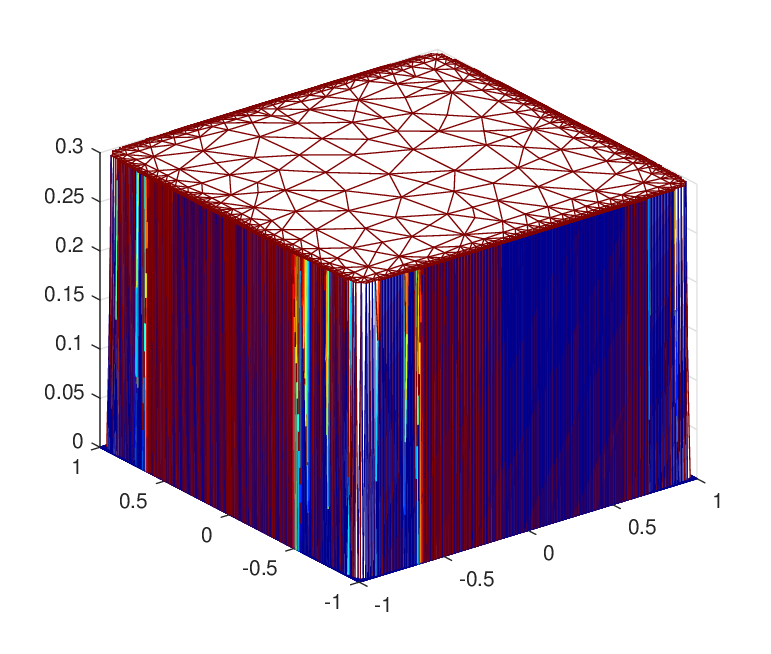}
\hspace{-0.01mm}
\end{subfigure}
\begin{subfigure}[]

\includegraphics[width=4.5cm]{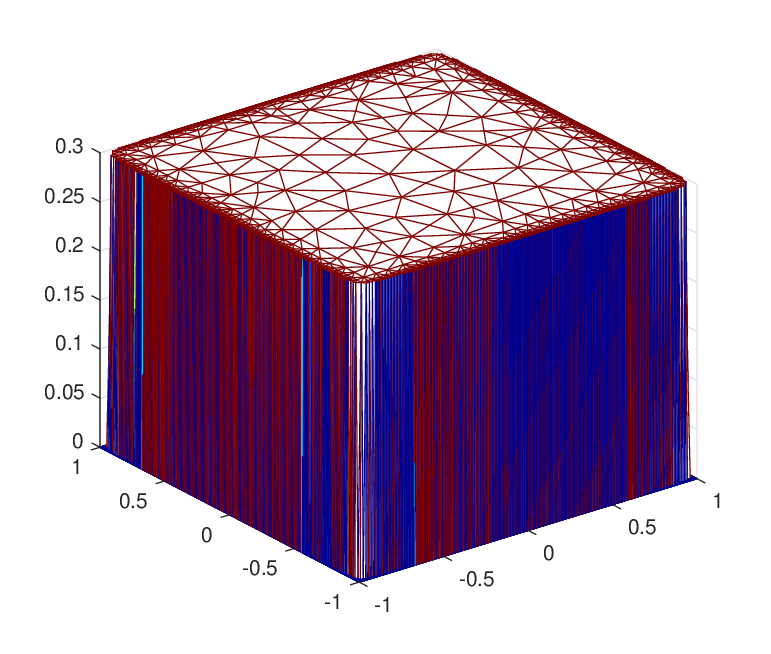}
\hspace{-0.01mm}
\end{subfigure}

\caption{The profiles of the numerically computed  control with $\alpha=0.5,\ \theta=0.7,\ \gamma=10^{0}$ (a), $\alpha=0.5,\ \theta=0.7,\ \gamma=10^{-1}$ (b) and $\alpha=0.5,\ \theta=0.7,\ \gamma=10^{-2}$  (c).}
\label{2numcontrol}
\end{figure}
 \begin{figure}[!htbp]
\centering
%\flushleft
\begin{subfigure}[]

\includegraphics[width=4.5cm]{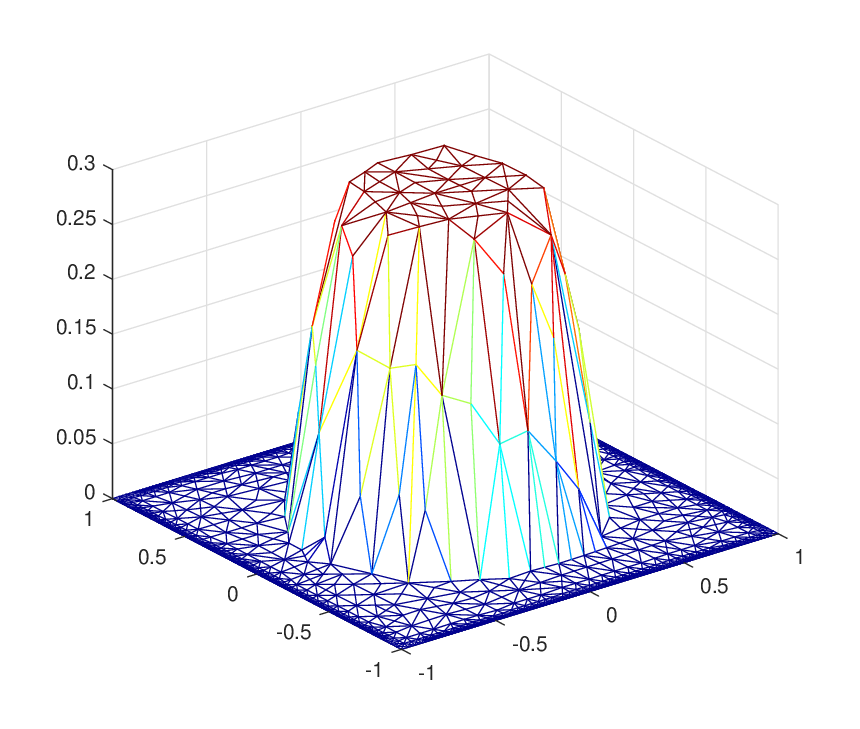}
\hspace{-0.01mm}
\end{subfigure}
\begin{subfigure}[]

\includegraphics[width=4.5cm]{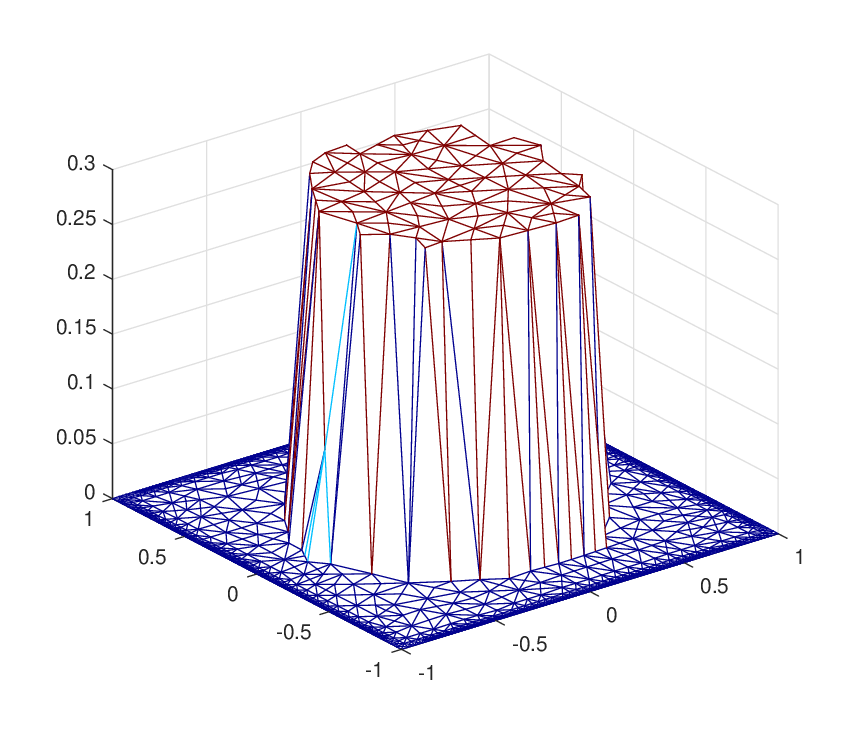}
\hspace{-0.01mm}
\end{subfigure}
\begin{subfigure}[]

\includegraphics[width=4.5cm]{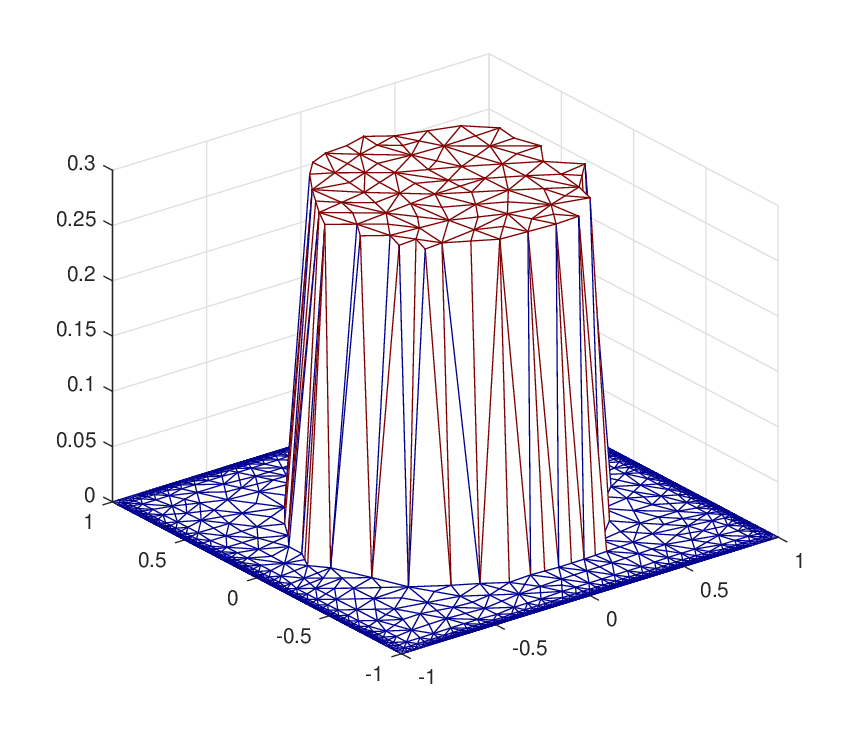}
\hspace{-0.01mm}
\end{subfigure}

\caption{The profiles of the numerically computed  control with $\alpha=1.5,\ \theta=0.5,\ \gamma=10^{0}$ (a), $\alpha=1.5,\ \theta=0.5,\ \gamma=10^{-1}$ (b) and $\alpha=1.5,\ \theta=0.5,\ \gamma=10^{-2}$  (c).}
\label{2numcontrol2}
\end{figure}
%
%In Figure \ref{2numstate}-\ref{2numstate2}, we show the profile of the numerical solution for the state when $\alpha=0.5,\ \theta=0.7$ and $\alpha=1.5,\ \theta=0.5$, respectively. As can be seen in Figures \ref{2numstate}-\ref{2numstate2}, the $L^1$ sparse term has little effect on the state %variables.

 % \begin{figure}[!htbp]
%\centering
%%\flushleft
%\begin{subfigure}[]
%
%\includegraphics[width=4.5cm]{0251y.eps}
%\hspace{-0.01mm}
%\end{subfigure}
%\begin{subfigure}[]
%
%\includegraphics[width=4.5cm]{02501y.eps}
%\hspace{-0.01mm}
%\end{subfigure}
%\begin{subfigure}[]
%
%\includegraphics[width=4.5cm]{025001y.eps}
%\hspace{-0.01mm}
%\end{subfigure}
%
%\caption{The profiles of the numerically computed  state with $\alpha=0.5,\ \theta=0.7,\ \gamma=10^{0}$ (a), state with $\alpha=0.5,\ \theta=0.7,\ \gamma=10^{-1}$ (b) and state with $\alpha=0.5,\ \theta=0.7,\ \gamma=10^{-2}$  (c).}
%\label{2numstate}
%\end{figure}
% \begin{figure}[!htbp]
%\centering
%%\flushleft
%\begin{subfigure}[]
%
%\includegraphics[width=4.5cm]{0751y.eps}
%\hspace{-0.01mm}
%\end{subfigure}
%\begin{subfigure}[]
%
%\includegraphics[width=4.5cm]{07501y.eps}
%\hspace{-0.01mm}
%\end{subfigure}
%\begin{subfigure}[]
%
%\includegraphics[width=4.5cm]{075001y.eps}
%\hspace{-0.01mm}
%\end{subfigure}
%
%\caption{The profiles of the numerically computed state with $\alpha=1.5,\ \theta=0.5,\ \gamma=10^{0}$ (a), state with $\alpha=1.5,\ \theta=0.5,\ \gamma=10^{-1}$ (b) and state with $\alpha=1.5,\ \theta=0.5,\ \gamma=10^{-2}$  (c).}
%\label{2numstate2}
%\end{figure}
\begin{example}\label{Exm:3}
In the third example we consider an optimal control problem with $f=6\sin(4y)\cos(4x)e^x,\ y_d=-4\sin(4y)\cos(4x)e^x$. We set $\Omega=(-1,1)^{2}$, $\gamma=0.1$, $\beta=1$, $a=-0.3$, $b=0.3$, respectively.
\end{example}
In Figure \ref{3mesh}  we show the initial mesh and the refinement mesh after 13 adaptive steps with $\alpha=0.5,\ \theta=0.7$. We observe that the mesh nodes are distributed around the domain where the solutions have a large gradient as well as at the boundarys. In Figure \ref{3con}, the convergence rates of error estimators and indicators for $\alpha=0.5$ and $\alpha=1.5$ are presented, respectively. It can be observed that, in both cases, each contribution decays with the optimal convergence rate  $N^{-\frac{1}{2}}$. In Figure \ref{3num},  the profiles of the numerical control and state with $\alpha=0.5,\ \theta=0.7$ are provided.
\begin{figure}[!htbp]
\centering
%\flushleft
\label{2a}
\includegraphics[width=6.5cm]{025uni.eps}
\hspace{-0.01mm}
\label{2b}
\includegraphics[width=6.4cm,height=5.8cm]{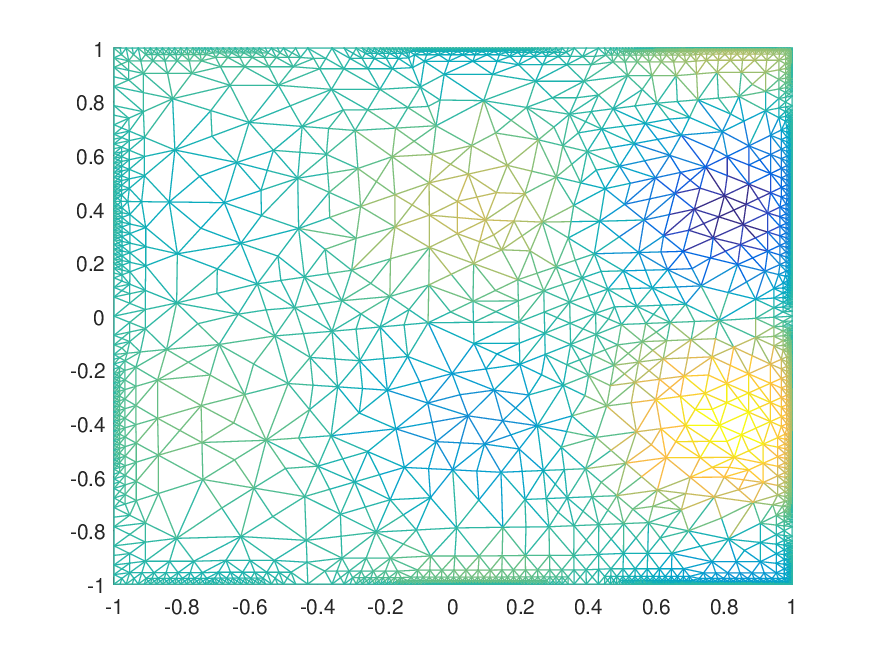}
%\hspace{-1mm}
%\label{2c}
%\includegraphics[width=4cm]{control_0.25.eps}
%\hspace{-5mm}
\caption{The initial mesh (left)  and the final refinement mesh (right) with $\alpha=0.5,\theta=0.7$ on the square.}
\label{3mesh}
\end{figure}
\begin{figure}[!htbp]
\centering
%\flushleft
\label{2a}
\includegraphics[width=6.5cm]{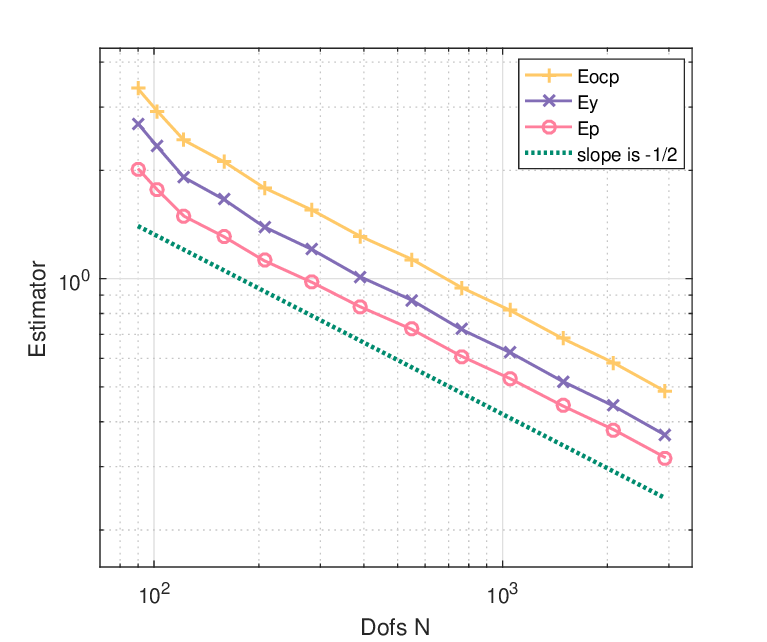}
\hspace{-0.01mm}
\label{2b}
\includegraphics[width=6.5cm]{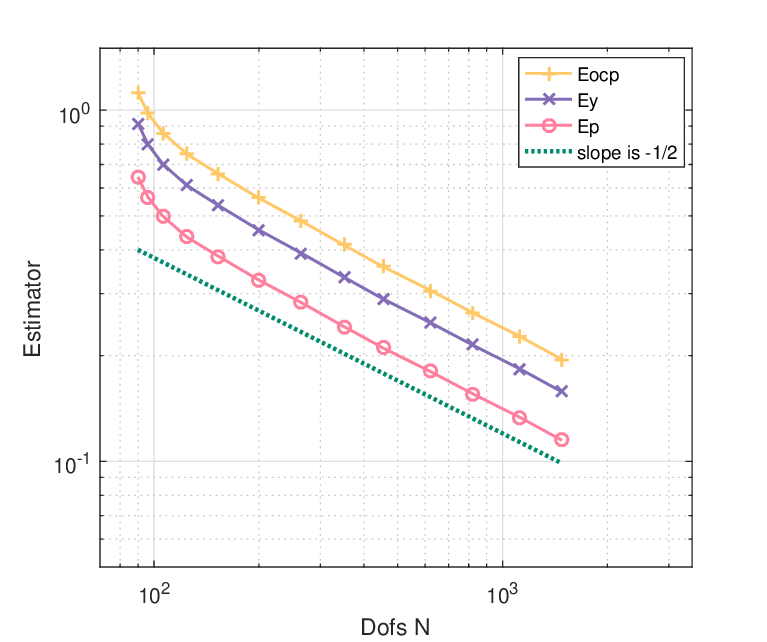}
%\hspace{-1mm}
%\label{2c}
%\includegraphics[width=4cm]{control_0.25.eps}
%\hspace{-5mm}
\caption{The convergent behaviors of the indicators and estimators for $\alpha=0.5,\ \theta=0.7$  (left)  and $\alpha=1.5,\ \theta=0.5$ (right).}
\label{3con}
\end{figure}

\begin{figure}[!htbp]
\centering
%\flushleft
\label{2a}
\includegraphics[width=6.5cm]{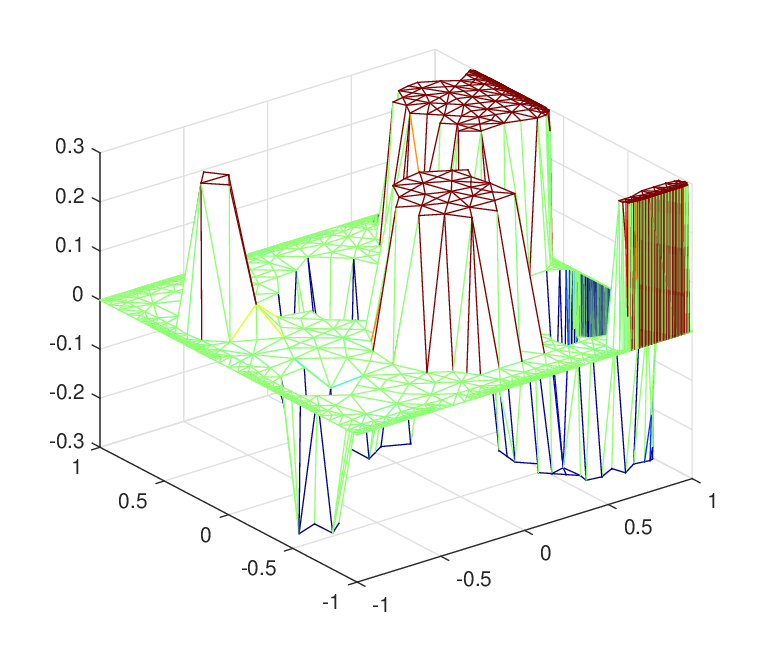}
\hspace{-0.01mm}
\label{2b}
\includegraphics[width=6.5cm]{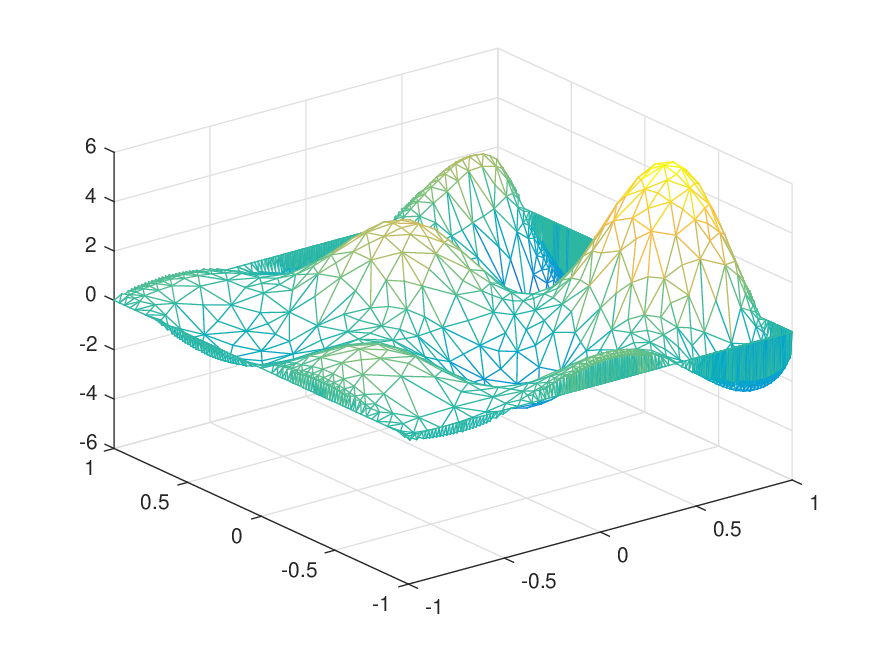}
%\hspace{-1mm}
%\label{2c}
%\includegraphics[width=4cm]{control_0.25.eps}
%\hspace{-5mm}
\caption{The numerical control(left) and state (right) with $\alpha=0.5,\ \theta=0.7$. }
\label{3num}
\end{figure}
\section{Conclusion}

In this paper, we present and analyze a weighted residual a posteriori error estimate for an optimal control problem. The problem involves a cost functional that is nondifferentiable, a state equation with an integral fractional Laplacian, and control constraints. We provide first-order optimality conditions and derive upper and lower bounds on the a posteriori error estimates for the finite element approximation of the optimal control problem. Moreover, we demonstrate that the approximation sequence generated by the adaptive algorithm converges at the optimal algebraic rate. Finally, we validate the theoretical findings through numerical experiments.
%\section*{Data availability statement}
%All data that support the findings of this study are included within the article (and any supplebmentary files).
%\section*{Declaration of Interest Statement}
% The authors declare that they have no known competing financial interests or personal relationships that could have appeared to influence the work reported in this paper.
\section*{Acknowledgements}

The work was supported by the National Natural
Science Foundation of China under Grant No. 11971276  and 12171287.

...
%%-----------------------------
%%      your bibliography
%%-----------------------------
\end{document}